 \newtheorem{theorem}{Theorem}[section]
 \newtheorem{corollary}[theorem]{Corollary}
 \newtheorem{lemma}[theorem]{Lemma}
 \newtheorem{proposition}[theorem]{Proposition}
 \theoremstyle{definition}
 \newtheorem{definition}[theorem]{Definition}
 \newtheorem{remark}[theorem]{Remark}
 \newtheorem{remarks}[theorem]{Remarks}
  \newtheorem{example}[theorem]{Example}
 \newtheorem*{example*}{Example}
 \numberwithin{equation}{section}
\DeclareMathOperator{\Ima}{Im}
\DeclareMathOperator{\M}{\mathsf{M}}
\DeclareMathOperator{\GL}{\mathsf{GL}}
\DeclareMathOperator{\m}{\mathsf{mod}}
\DeclareMathOperator{\R}{\mathcal{R}}
\DeclareMathOperator{\F}{\mathcal{F}}
\DeclareMathOperator{\G}{\mathcal{G}}
\DeclareMathOperator{\IS}{\mathcal{IS}}
\DeclareMathOperator{\PT}{\mathcal{PT}}
\DeclareMathOperator{\T}{\mathcal{T}}
\DeclareMathOperator{\Mo}{\mathcal{M}}
\DeclareMathOperator{\I}{\mathbf{I}}
\DeclareMathOperator{\diag}{\mathsf{diag}}
\DeclareMathOperator{\dom}{\mathsf{dom}}
\DeclareMathOperator{\gr}{\mathsf{gr}}
\DeclareMathOperator{\tab}{Tab}
\DeclareMathOperator{\sst}{SST}
\DeclareMathOperator{\csst}{cSST}
\DeclareMathOperator{\Ind}{Ind}
\newcommand{\al}{\alpha}
\newcommand{\la}{\lambda}
\newcommand{\La}{\Lambda}
\newcommand{\lm}{\lambda / \mu}
\begin{document}
\title[Tableaux and orbit harmonics quotients for monoids]{Tableaux and orbit harmonics quotients for finite transformation monoids }

\author[Maliakas]{Mihalis Maliakas}
\address{%
	Department of Mathematics\\
	University of Athens\\
	Greece}
\email{mmaliak@math.uoa.gr}

\author[Stergiopoulou]{Dimitra-Dionysia Stergiopoulou}
\address{%
		Department of Mathematics\\
		University of Thessaly\\
		Greece}
\email{dstergiop@uth.gr}

\subjclass{05E10, 20M30}

\keywords{transformation monoid, tableau, branching rules, orbit harmonics, Cauchy decompositions}

\date{}

\begin{abstract}We extend Grood’s tableau construction of irreducible representations of the rook monoid and Steinberg’s analogous result for the full transformation monoid. Our approach is characteristic-free  and applies to any submonoid $\mathcal{M}(n)$ of the partial transformation monoid on an $n$-element set that contains the symmetric group. To achieve this,  we introduce and study a functor from the category of rational representations of the monoid of $n \times n$ matrices to  the category of finite dimensional representations of $\mathcal{M}(n)$. We establish two branching rules. Our main results describe graded module structures of orbit harmonics quotients for the rook, partial transformation, and full transformation monoids. This yields analogs of the Cauchy decomposition for polynomial rings in $n\times n$ variables.	
\end{abstract}

\maketitle
\tableofcontents
	\section{Introduction}\label{S1}
The main goal of this paper is to prove characteristic-free filtration  results for the orbit harmonics quotients of three important finite monoids, the rook monoid $\IS_n$, the partial transformation monoid $\PT_n$, and the full transformation monoid $\T_n$. These results are motivated by a theorem of Rhoades \cite[Theorem 4.2]{Rho}  determining the $\mathfrak{S}_n \times \mathfrak{S}_n$-module structure of the orbit harmonics quotient of the symmetric group $\mathfrak{S}_n$ over a field whose characteristic is zero or greater than $n$. To be precise, let $n$ be a positive integer, let $\Bbbk$ be a field whose characteristic is zero or greater than $n$, let $\textbf{x}_{n,n}$ be an $n \times n$ matrix of variables $(x_{i,j})$, and let $\Bbbk[\textbf{x}_{n,n}]$ be the polynomial ring over these variables. Rhoades considers the ideal $I_n$ of $\Bbbk[\textbf{x}_{n,n}]$ generated by all row and column variable
sums and all products of two variables drawn from the same row or column of the matrix $\textbf{x}_{n,n}$. 
He  shows that the quotient $\Bbbk[\textbf{x}_{n,n}]/I_n$ is the orbit harmonics quotient associated to the locus of  $n \times n$ permutation matrices in the set $M_n(\Bbbk)$ of $n \times n$ matrices.  Moreover, he establishes deep connections with the combinatorics of permutations. In particular he shows that the reversed Hilbert series of $\Bbbk[\textbf{x}_{n,n}]/I_n$ is the generating function of permutations in $\mathfrak{S}_n$ by the length of their longest increasing subsequence.

We have the natural action of the group $\mathfrak{S}_n \times \mathfrak{S}_n$ on the matrix $(x_{i,j})$ given by independent row and column permutations. This yields an action on $\Bbbk[\textbf{x}_{n,n}]$ and $I_n$ is an $\mathfrak{S}_n \times \mathfrak{S}_n$-submodule of $\Bbbk[\textbf{x}_{n,n}]$. The ideal $I_n$ is homogeneous and thus $\Bbbk[\textbf{x}_{n,n}]/I_n$ is a graded $\mathfrak{S}_n \times \mathfrak{S}_n$-module. We write $\lambda \vdash n$ to indicate that $\lambda$ is a partition of $n$.  For such a $\lambda$, let $S^\lambda$ be the corresponding Specht module of $\mathfrak{S}_n$. Rhoades showed the following theorem which is very relevant for the present paper.

\begin{theorem}\cite[Theorem 4.2]{Rho}\label{Rho} Suppose $n$ is a positive integer and $\Bbbk$ is a field whose characteristic is zero or greater than $n$. Then for any $r\ge 0$, the degree $r$ piece of $\Bbbk[\textbf{x}_{n,n}]/I_n$  has $\mathfrak{S}_n \times \mathfrak{S}_n$-module structure  \[\big( \Bbbk[\textbf{x}_{n,n}]/I_n \big)_r \cong \bigoplus_{\substack{\lambda \vdash n\\ \lambda_1 = n-r}} S^\lambda \otimes S^\lambda,\] where the direct sum is over all partitions $\lambda$ of $n$ with first part $\lambda_1=n-r$.
\end{theorem}
Hence this result yields the irreducible decomposition of $\big( \Bbbk[\textbf{x}_{n,n}]/I_n \big)_r$ as an $\mathfrak{S}_n \times \mathfrak{S}_n$-module if the characteristic is zero or greater than $n$.
 
\subsection*{Outline of the paper} This paper concerns submonoids $\Mo(n)$ of $\PT_n$ that contain $\mathfrak{S}_n$ with particular emphasis on $\IS_n, \PT_n$ and $\T_n$. Our main contributions are summarized in items (i)-(iv) below. Let $\Bbbk$ be an infinite field of arbitrary characteristic. By   $P_{\Bbbk}(n,r)$ we denote the category of finite dimensional rational representations of degree $r$ of the monoid $\M_n(\Bbbk)$ of $n \times n$ matrices over $\Bbbk$.

\textbf{(i)} For any monoid $\Mo(n)$ as above, we define a functor  $\G_{\Mo (n)}: P_{\Bbbk}(n,r) \to \m \Mo(n),$ the symmetrized Schur functor (see Definition \ref{mSchurf}). We examine how this is related to the classical Schur functor  (see Corollary \ref{cormain1}). We also show that $\G_{\Mo(n)}(M)$ is an induced module from the symmetric group $\mathfrak{S}_r$ (see Theorem \ref{ind}).

\textbf{(ii)} Utilizing the symmetrized Schur functor, we define  $\Mo(n)$-modules $\R(n)^{\la / \mu}$ for any submonoid $\Mo(n)$ of $\PT_n$ and any skew partition $\la / \mu$ (see Definition \ref{Specht}). These modules were motivated by the constructions of Grood \cite{Gro} and Steinberg \cite[Section 5.3]{Ste} of the irreducible representations of the rook monoid and the full transformation monoid, respectively, over the complex numbers.  Our construction is characteristic-free and uniform. From the basis theorem for skew Schur modules of $\GL_n(\Bbbk)$ \cite{ABW}, we obtain an analogous result for the modules $\R(n)^{\la / \mu}$ (see Theorem \ref{bm}). We prove two branching rules for the $\Mo(n)$-modules $\R(n)^{\la / \mu}$.  These generalize Solomon’s branching rule for the rook monoid $\IS_n$ over the complex numbers \cite[Corollary 3.15]{Sol} and the branching rule for Specht modules of James and Peel \cite[3.1 Theorem]{JaPe} (see Theorem \ref{br2}).

\textbf{(iii)}  Let $\textbf{x}_{m,n}$ be an $m \times n$ matrix of variables. If $\lambda$ is a partition of a nonnegative integer $r$, we write $\la \vdash r$. 
The group $\GL_m({\Bbbk}) \times \GL_n(\Bbbk)$ acts on the polynomial ring $\Bbbk[\textbf{x}_{m \times n}]$ in a natural way and when the characteristic of $\Bbbk$ is zero, the irreducible decomposition of the degree $r$ homogeneous component $\Bbbk[\textbf{x}_{m \times n}]_r$ is $\bigoplus_{\la \vdash r} L_\la(V_m) \otimes L_\la(V_n)$, see \cite[p. 121]{F}. Here,  $V_m$ is the natural $\GL_m({\Bbbk})$-module and  $L_{\lambda}(V_m)$ is the corresponding Schur module associated to the partition $\lambda$. In arbitrary characteristic, the previous decomposition holds up to filtration \cite[(3.2.5) Theorem]{W}. This is referred to as the Cauchy decomposition of $\Bbbk[\textbf{x}_{m \times n}]$ since the corresponding identity of formal characters is known as the Cauchy identity \cite[7.12.1 Theorem]{St}. Related results appear in various contexts, for example invariant theory \cite{DKR, DEP, Ma}, highest weight categories \cite{Kr} and cellular algebras \cite{Ax}. In Section 5 (Definition \ref{Iideals}) we consider natural homogeneous ideals $J_{m,n}(Z)$ of $\Bbbk[\textbf{x}_{m \times n}]$ associated to the monoids $\IS_n$, $\PT_n$, $\T_n$. These ideals are closely related to Rhoades's ideal $I_n$. When $m=n$, the ideal $J_{n,n}(\IS)$, for example, is generated by the products of any two variables lying in the same row or in the same column, while $I_n$ is obtained from $J_{n,n}(\IS)$ by adjoining all row sums and all column sums.  We prove the following result on the graded structure of the quotient rings (see Theorem \ref{m2}), which we regard as a characteristic-free analog  for the monoids $\IS_n$, $\PT_n$ and $\T_n$ of Theorem \ref{Rho} mentioned above. For a partition $\lambda$ we denote the transpose partition by $\lambda'$.
\begin{theorem}\label{Im2} Suppose $m,n,r$ are positive integers.
\begin{enumerate}[leftmargin=*]
	\item[\textup{(1)}] The $\IS_m \times \IS_n$-module $\big(\Bbbk[\textbf{x}_{m \times n}] / J_{m,n}(\IS)\big)_r$ has a filtration with quotients \[{\R(m)}^{\lambda} \otimes {\R(n)}^\lambda, \ \lambda \vdash r, \] each appearing exactly once.
	\item[\textup{(2)}] The $\GL_m(\Bbbk) \times \PT_n $-module $\big(\Bbbk[\textbf{x}_{m \times n}] / J_{m,n}(\PT)\big)_r$ has a filtration with quotients \[L_{\lambda'}(V_m) \otimes {\R(n)}^\lambda, \ \lambda \vdash r,\] each appearing exactly once.
	\item[\textup{(3)}] The $\mathfrak{S}_m \times \T_n $-module, $\big(\Bbbk[\textbf{x}_{m \times n}] / J_{m,n}(\T)\big)_r$ has a filtration with quotients \[L_{\lambda'}(U_m) \otimes {\R(n)}^\lambda, \ \lambda \vdash r,\] each appearing exactly once.
	\end{enumerate}\end{theorem} Thus we obtain analogs of the Cauchy decomposition of the polynomial ring $\Bbbk[\textbf{x}_{m \times n}]$ and the basis theorem of D\'esarm\'enien,  Kung and Rota \cite{DKR}. 

\textbf{(iv)} We continue with the setup of (iii) and assume in addition that $m=n$. Consider the monoids $\IS_n, \PT_n$ and $\T_n$ as affine varieties in $\M_{n}(\Bbbk)$.  Orbit harmonics is a general method that associates to a finite locus \(Z \subseteq \Bbbk^N\) in affine
space a graded quotient ring obtained from the associated graded ideal
\(\gr(\I(Z))\) of its vanishing ideal \(\I(Z)\). Thus, although the coordinate
ring $\Bbbk[\textbf{x}_N]/\I(Z)$ of $Z$ is usually not graded, the
quotient \(\Bbbk[\textbf{x}_N]/\gr(\I(Z))\) is graded and has the same
underlying vector space dimension. This method has a long history, going
back at least to Kostant's work on Lie group representations on polynomial
rings \cite{Kos}, and appears prominently in the work of Garsia and Procesi
on graded \(\mathfrak S_n\)-modules and \(q\)-Kostka polynomials
\cite{GaPro}. Rhoades applied orbit harmonics to the locus of permutation matrices in $M_n(\Bbbk)$ \cite{Rho}. Using this method, we prove that the above ideals $J_{n,n}(Z)$ are the associated graded ideals $\gr(\I(Z))$ of the corresponding vanishing ideals, where $Z$ is any of $\IS_n, \PT_n, \T_n$. Thus from Theorem \ref{Im2} we obtain results on the graded module structure of the orbit harmonics quotients $\Bbbk[\textbf{x}_{n \times n}]/ \gr(\I(Z))$ (see Corollary \ref{m3}). We regard this and Theorem \ref{Im2} as the main results of the paper.

Throughout the paper we take the point of view of utilizing the polynomial representation theory of the general linear group $\GL_n(\Bbbk)$ \cite{ABW,Gr,W} and related combinatorics. This is done using rational modules of the monoid $\M_n(\Bbbk)$ and the symmetrized Schur functor. Thus our methods differ significantly from \cite{Rho}. The lack of semisimplicity in positive characteristic (and even in characteristic zero for the monoids $\PT_n$ and $\T_n$) necessitates the use of suitable filtrations in place of direct sum decompositions. However, we are not aware of an a priori reason that guarantees the existence of such filtrations in arbitrary characteristic.  Alternatively, one could use induced modules from the symmetric group based on work of Clifford \cite{Cli}, Munn \cite{Mun1, Mun2} and Ponizovskii \cite{Poz} and its modern recast in terms of Green's idempotent theory \cite{Gr}  by Ganyushkin, Mazorchuk and Steinberg \cite{GaMaSt}.  However, our approach seems very suitable for the purposes of Theorem \ref{Im2} and Corollary \ref{m3}. For an approach to the representations of the rook monoid $\IS_n$ using Schur algebras, we refer to Andre and Martins \cite{AnMar}. 

Section 2 is devoted to preliminaries concerning polynomial representations of $\GL_n(\Bbbk)$ and representations of $\mathfrak{S}_n$ that will be used in the sequel. In Section 3 we define the symmetrized Schur functor and study its basic properties. In Section 4 we introduce analogs $\R(n)^{\la / \mu}$ and $\R(n)_{\la / \mu}$ of skew Specht modules $S^{\la / \mu}$ and dual skew Specht modules $S_{\la / \mu}$. We establish for these a basis theorem and two branching rules. As an application of the first branching rule we show that when the characteristic of $\Bbbk$ is zero, the $\Mo(n)$-modules $\R^\lambda$ are distinct in most cases as $\lambda$ varies over the partitions of $r$ with $r \le n$. Section 5 contains Theorem \ref{Im2} and its corollary  on orbit harmonics quotients. In Appendix A we prove a skew symmetric version of Theorem \ref{Im2}.

\section{Preliminaries}
In this section, we establish notation and  gather preliminaries that will be used in the sequel. Throughout this paper, $\Bbbk$ is an infinite field of arbitrary characteristic unless specified otherwise.

\subsection{Monoids}\label{pm} Let $\PT_n$ be the subset  of $\M_n(\Bbbk)$ of matrices with entries from $\{0,1\}$ with at most one entry equal to 1 in each column. Under matrix multiplication, $\PT_n$ is a monoid known as the \textit{partial transformation monoid}. The submonoid $\IS_n$ of $\PT_n$ consisting of the matrices with at most one entry equal to 1 in every column and every row is called the \textit{symmetric inverse monoid}. This is also known as the \textit{rook monoid} because of the correspondence between matrices in $\IS_n$ and placements of non-attacking rooks on an $n \times n$ chessboard. The submonoid $\T_n$ of $\PT_n$ consisting of matrices that have exactly one entry equal to 1 in each column is called the \textit{full transformation monoid}. The symmetric group $\mathfrak{S}_n$ may be realized as the group of permutation matrices in $\M_n(\Bbbk)$.
\begin{center}
	\begin{tikzcd}[scale cd=0.9,sep=small]
		& & \M_n(\Bbbk) & & \\
		& & \PT_n \arrow[hookrightarrow, u] & & \\
		& \IS_n \arrow[hook,ru] & & \T_n \arrow[hook',lu] &\\
		& & \mathfrak{S}_n \arrow[hook',lu] \arrow[hook,ru]& & 
	\end{tikzcd}
\end{center}
We record the cardinalities.
\begin{proposition}\label{card}	The following hold.
	\[|\IS_n| = \sum_{r=0}^{n} \tbinom{n}{r}^2r!, \ \ |\PT_n| = (n+1)^n, \ \ |\T_n| = n^n.\]
\end{proposition}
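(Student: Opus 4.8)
The plan is to count the elements of each monoid by a direct combinatorial bijection with the matrix descriptions given in Section \ref{pm}, treating the three cases separately.

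For $\PT_n$, an element is a $0/1$ matrix with at most one $1$ in each column. Each of the $n$ columns is therefore independently either the zero column or has its unique $1$ in one of $n$ rows, giving $n+1$ choices per column and $(n+1)^n$ in total. The full transformation monoid $\T_n$ is the subset where every column has exactly one $1$, so each of the $n$ columns has exactly $n$ choices, giving $n^n$. Phrased representation-theoretically, an element of $\PT_n$ is a partial function from $[n]$ to $[n]$ and an element of $\T_n$ is a total function $[n]\to[n]$, so these are the standard counts of (partial) functions.

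For $\IS_n$ the matrix has at most one $1$ in each row and each column, so it is the matrix of a partial bijection between subsets of $[n]$. Such a bijection is determined by choosing its domain $D$ (a subset of size $r$ for some $0\le r\le n$), its image $I$ (another subset of size $r$), and a bijection $D\to I$. The number of ways is $\binom{n}{r}$ for the domain, $\binom{n}{r}$ for the image, and $r!$ for the bijection; summing over $r$ gives $\sum_{r=0}^n \binom{n}{r}^2 r!$.

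There is essentially no obstacle here: the only point requiring a little care is making sure the parametrization of $\IS_n$ is a genuine bijection, i.e. that distinct triples $(D,I,f)$ with $f\colon D\xrightarrow{\sim} I$ yield distinct matrices and that every rook placement arises this way, which is immediate from reading off the domain, image, and the induced map from the positions of the $1$'s. One could alternatively record the well-known closed form $|\IS_n|=\sum_{r=0}^n \binom{n}{r}^2 r! = n!\,L_n(-1)$ in terms of a Laguerre polynomial, but the stated sum is all that is needed.
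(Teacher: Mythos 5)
Your proof is correct and is the standard counting argument; the paper itself simply records these well-known cardinalities without supplying a proof, so there is nothing to compare against. Your column-by-column count for $\PT_n$ and $\T_n$ and your domain–image–bijection parametrization for $\IS_n$ are exactly what one would expect, and your remark that the parametrization of $\IS_n$ is a genuine bijection (reading off domain, image, and map from the positions of the $1$'s) closes the only conceivable gap.
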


For a positive integer $n$, we let $[n]:=\{1,2,\dots, n\}$. A \textit{partial map} $p$ of $[n]$ is a map from a subset $\dom(p)$ of $[n]$, called the \textit{domain} of $p$, to $[n]$. If
$p_1 : [n] \to [n]$ and $p_2 : [n] \to [n]$ are partial maps, we have the partial map given by their composition $p_2 p_1 : [n] \to [n]$, $p_2 p_1 (a)= p_2(p_1(a))$ for all $a \in p_1^{-1}(\dom(p_2))$. For $A=(a_{ij}) \in \PT_n$, we define the corresponding partial map $p_A:[n] \to [n]$ by 
$
\dom(p_A)=\{j\in[n]: a_{ij}=1 \text{ for some } i\in[n]\}$ and 
$p_A(j)=i \text{ if } a_{ij}=1.$
 Conversely, if $p:[n]\to [n]$ is a partial map, we have the matrix $A_p=(a_{ij}) \in \PT_n$ defined by $a_{ij}=1$ if and only if $j\in \dom(p)$  and  $p(j)=i$. We see that the set of partial maps $[n] \to [n]$ is a monoid with respect to composition and is isomorphic to $\PT_n$.
	
	\subsection{Skew partitions and tableaux}\label{s2.1}
	A \textit{partition} of  an  integer $r$ is  a  finite sequence  of  weakly  decreasing  nonnegative integers that sum to $r$. For a partition $\lambda$, we denote by $\ell(\lambda)$ the number of nonzero terms of the sequence. We identify two partitions if they differ by a string of zeros. For partitions $\la=(\la_1, \dots, \la_q)$ and $\mu=(\mu_1, \dots, \mu_q)$ we write $\mu \subseteq \la$ if $\mu_i \le \la_i$ for all $i$. A \textit{skew partition} $\la / \mu$ is a pair of partitions $\la $ and $\mu$ such that $\mu \subseteq \la$. By a slight abuse of notation, we usually write $\la / \mu =(\la_1 - \mu_1, \dots, \la_q - \mu_q)$. If $\mu=(0)$, we identify the skew partition $\la / (0)$ with the partition $\lambda$. For a skew partition $\la / \mu$ we write $|\la / \mu|=r$ if $|\la|-|\mu|=r$ and we say that $\la / \mu$ is a skew partition of $r$.

	The \textit{diagram} of a partition  $\lambda = (\lambda_1, \dots, \lambda_q)$ is a collection of cells arranged in left-justified rows with a decreasing number of cells in each row from top to bottom. The diagram of a skew partition $\la / \mu$ may be obtained from the diagram of $\lambda$ by omitting the cells corresponding to $\mu$. For example, if $\lambda=(4,3,3)$ and $\mu=(2,1)$, then the diagram of $\la / \mu$ is  \begin{center}
		\ytableausetup{smalltableaux}\ydiagram{2+2,1+2,0+3}
	\end{center}
	
	For a skew partition $\la / \mu$, a \textit{tableau} of shape $\la / \mu$ with entries from $[n]$ is a filling of the diagram of $\la / \mu$ with elements of $[n]$. Let $\tab_{\lambda / \mu}([n])$ be the set of such tableaux. The \textit{weight} $\al=(\al_1,\dots, \al_n)$ of a tableau $T \in \tab_{\lambda / \mu}([n])$ is defined by letting $\al_i$ be the number of appearances of the entry $i$ in $T$. We also say that the weight of $i$ in $T$ is equal to $\al_i$. A  tableau $T \in \tab_{\lambda / \mu}([n])$ is called \textit{semistandard} (respectively \textit{co-semistandard}) if the entries are strictly (respectively weakly) increasing across the rows from left to right and  weakly (respectively strictly) increasing in the columns from top to bottom. (In \cite{ABW}, the term `standard' is used for our semistandard, and `co-standard' for our co-semistandard.) The subset of $\tab_{\lambda / \mu}([n])$ of semistandard (respectively co-semistandard) tableaux of shape $\la / \mu$ will be denoted by $\mathrm{SST}_{\la / \mu}([n])$ (respectively $\mathrm{cSST}_{\la / \mu}([n])$). 
	 
	\subsection{Schur modules and Weyl modules}\label{Sm}
	Suppose $V$ is a finite dimensional vector space. Let $\La(V)=\bigoplus_{i \ge 0}\La^i(V)$ be the exterior algebra of $V$ with the usual grading and let $Sym(V) = \bigoplus_{i\ge 0}Sym_i(V)$ be the symmetric algebra of $V$ with usual grading. We also have the divided power algebra $D(V)=\bigoplus_{i\geq 0}D_i(V)$ (\cite[Section I.4]{ABW}) of $V$, where $D_i(V)$ is defined as the dual of $Sym_i(V^*)$ and $V^*$ denotes the dual of $V$.   If $\al=(\al_1,\dots, \al_q)$ is a sequence of nonnegative integers, we use the notation $\La^{\al}(V):=\La^{\al_1}(V) \otimes \dots \otimes \La^{\al_q}(V)$  and likewise $Sym_{\al}(V):=Sym_{\al_1}(V) \otimes \dots \otimes Sym_{\al_q}(V)$ and  $D_{\al}(V):=D_{\al_1}(V) \otimes \dots \otimes D_{\al_q}(V)$.
	
	Suppose $\la / \mu$ is a skew partition. The \textit{Schur module} $L_{\lambda / \mu}(V)$ and the \textit{Weyl module} $K_{\lm}(V)$ are defined in \cite[Definition II.1.3]{ABW} as the images of particular $GL(V)$-maps\begin{align*} d_{\lm} (V):\La^{\lambda / \mu}(V) \to Sym_{\lambda' / \mu'}(V), \  \ d'_{\lm}(V): D_{\lm}(V) \to \La^{\lambda' / \mu'} (V).\end{align*} For example,  if $\lambda=(r)$, then $L_{\lambda}(V)=\La^r(V)$ and $K_{\lambda}(V)=D_r(V)$. If $\lambda=(1^r)$, then $L_{\lambda}(V)=Sym_r(V)$ and $K_{\lambda}(V)=\La^r(V)$.
	
	Suppose $\{e_1, \dots, e_n\}$ is a basis of the vector space $V$ and  $T \in \mathrm{Tab}_{\la / \mu}([n])$ is a  tableau. We write $T(i,j)$ for the entry of $T$ in position $(i,j)$. Define elements $X_T \in \La^{\lm}(V)$ and $Y_T \in D_{\la / \mu}(V)$ by \begin{align*}
		X_T:=e_{T(1,1)} \cdots e_{T(1,\lambda_1 - \mu_1)}\otimes \cdots\otimes e_{T(q,1)} \dots e_{T(q,\lambda_q - \mu_q)}.
	\end{align*}
	where $\la = (\la_1, \dots, \la_q)$ and $\mu = (\mu_1, \dots, \mu_q)$, and
		\begin{align*}
		Y_T:=e_1^{(a_{11})} \cdots e_n^{(a_{1n})}  \otimes \cdots \otimes  e_1^{(a_{q1})} \cdots e_n^{(a_{qn})},
	\end{align*}
	where $a_{ij}$ is equal to the number of entries in row $i$ of $T$ that are equal to $j$. 
	\begin{theorem}\cite[Theorem II.2.16, Theorem II.3.16]{ABW}\label{Bthm} Let $V$ be a finite dimensional vector space, $\{e_1, \dots, e_n\}$ a basis of $V$ and $\la / \mu$ a skew partition. Then a basis \begin{itemize}[leftmargin=*] \item of the Schur module $L_{\lm}(V)$ is the set $\{ d_{\lm}(V)(X_T): T \in \sst_{\lm}([n])\}$, \item of the Weyl module $K_{\lm}(V)$ is the set $\{ d'_{\lm}(V)(Y_T): T \in \csst_{\lm}([n])\}.$  \end{itemize}
	\end{theorem}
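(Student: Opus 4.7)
The plan is to prove the two basis statements in parallel, since the constructions for $L_{\lm}(V)$ and $K_{\lm}(V)$ are dual to each other under the involution that swaps symmetric/divided powers with exterior powers and $\la/\mu$ with $\la'/\mu'$. I would focus on the Schur module case in detail; the Weyl module case follows by the same combinatorial skeleton with $Y_T$, $D_\al$ and $d'_{\lm}$ in place of $X_T$, $\La^\al$ and $d_{\lm}$.

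\textbf{Spanning.} By definition, $d_{\lm}(V)$ is surjective onto $L_{\lm}(V)$, and $\{X_T : T \in \tab_{\lm}([n])\}$ spans $\La^{\lm}(V)$ (modulo the usual sign conventions of the exterior product). Hence $\{d_{\lm}(V)(X_T) : T \in \tab_{\lm}([n])\}$ already spans $L_{\lm}(V)$. To cut this down to $\sst_{\lm}([n])$ I would introduce Garnir-type straightening relations, which in the ABW framework are produced from the defining ``box'' maps of $d_{\lm}$: they give explicit identities expressing $d_{\lm}(X_T)$ as a signed sum of $d_{\lm}(X_{T'})$ where $T'$ is obtained by shuffling entries between a column and an adjacent row, thereby repairing either a non-increase across a row or a strict decrease down a column. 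Fixing a monomial order on tableaux (for instance lexicographic order on the column reading word) and verifying that each straightening move strictly decreases this order, noetherian induction on tableaux reduces every $d_{\lm}(X_T)$ to a combination of $d_{\lm}(X_{T'})$ with $T' \in \sst_{\lm}([n])$.

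\textbf{Linear independence.} This is the characteristic-free heart of the argument. I would realize the whole picture over $\mathbb{Z}$: free $\mathbb{Z}$-modules $\La^{\lm}(V_{\mathbb{Z}})$ and $Sym_{\la'/\mu'}(V_{\mathbb{Z}})$ with a $\mathbb{Z}$-linear $d_{\lm,\mathbb{Z}}$, so that base change to $\Bbbk$ recovers the stated map. For each $T \in \sst_{\lm}([n])$ I would identify a distinguished monomial $m_T$ in a chosen basis of $Sym_{\la'/\mu'}(V_{\mathbb{Z}})$ — namely the one read off column by column from $T$ itself — and show (i) $m_T$ appears in $d_{\lm,\mathbb{Z}}(X_T)$ with coefficient $\pm 1$, and (ii) $m_T$ does not appear in $d_{\lm,\mathbb{Z}}(X_{T'})$ for any other $T' \in \sst_{\lm}([n])$, because all other shuffles contributing to $d_{\lm,\mathbb{Z}}(X_{T'})$ produce monomials strictly smaller than $m_{T'}$ in the column dominance order. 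Arranging semistandard tableaux compatibly with this order exhibits the transition matrix as unitriangular over $\mathbb{Z}$, hence invertible over any commutative ring, and in particular over $\Bbbk$.

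The main obstacle is the unimodularity claim in (i)--(ii): producing for each $T \in \sst_{\lm}([n])$ a leading monomial with coefficient exactly $\pm 1$ integrally. In characteristic $0$ one could instead compute characters via Jacobi--Trudi and match dimensions, but that shortcut collapses in positive characteristic, and nothing weaker than a $\pm 1$ leading coefficient suffices for the reduction mod $p$ argument. The skew case $\mu \ne \emptyset$ adds a further layer of care, since one must check that the $\mu$-shaped ``hole'' does not create new cancellations in the straightening identities. The detailed combinatorial bookkeeping for this step is the technical core of \cite[Theorems II.2.16 and II.3.16]{ABW}, and my plan would be to invoke their argument verbatim rather than attempt an independent route.
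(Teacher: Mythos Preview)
The paper does not prove this statement at all: Theorem \ref{Bthm} is stated with the citation \cite[Theorem II.2.16, Theorem II.3.16]{ABW} and no proof is given, since it is a classical result of Akin, Buchsbaum and Weyman that the present paper merely quotes and uses. Your proposal is therefore not comparable to any proof in the paper; what you have written is a reasonable high-level sketch of the original ABW argument (straightening for spanning, a unitriangularity/leading-term argument over $\mathbb{Z}$ for independence), and indeed you yourself say at the end that you would ``invoke their argument verbatim.'' In the context of this paper, the correct response is simply to cite \cite{ABW} as the authors do.
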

	
	We will need the following proposition. The proof of this follows from the proof of Theorem \ref{Bthm} given in \cite{ABW}.
	\begin{proposition}[Straightening law]\label{strl}Suppose $T \in \tab_{\lambda / \mu}([n])$.\begin{itemize}[leftmargin=*]
			\item There exist semistandard tableaux $T_i \in \sst_{\lm}([n])$ such that the weight of each $T_i$ is equal to the weight of $T$ and $d_{\lm}(V)(X_T)$ is a linear combination of the $d_{\lm}(V)(X_{T_i})$.
				\item There exist co-semistandard tableaux $T_i \in c\sst_{\lm}([n])$ such that the weight of each $T_i$ is equal to the weight of $T$ and $d'_{\lm}(V)(Y_T)$ is a linear combination of the $d'_{\lm}(V)(Y_{T_i})$.
		\end{itemize}\end{proposition}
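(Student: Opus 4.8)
The plan is to deduce the straightening law from the basis theorem (Theorem~\ref{Bthm}) by keeping track of the natural $\mathbb{Z}_{\ge 0}^n$-multigrading (the weight, or content, grading) carried by all the modules in sight. Recall that the exterior algebra $\La(V)$, the symmetric algebra $Sym(V)$, and the divided power algebra $D(V)$ each carry a $\mathbb{Z}_{\ge 0}^n$-grading, namely the one for which the generator $e_i$ has degree $\varepsilon_i$, the $i$-th standard basis vector of $\mathbb{Z}^n$; equivalently, this is the decomposition into weight spaces for the diagonal torus of $GL(V)$. This grading is inherited by the tensor products $\La^{\lm}(V)$, $Sym_{\lambda'/\mu'}(V)$, $D_{\lm}(V)$ and $\La^{\lambda'/\mu'}(V)$, and by construction the element $X_T \in \La^{\lm}(V)$ is homogeneous of multidegree equal to the weight $\al$ of $T$ (it is a product of factors $e_i$, with $e_i$ occurring with total multiplicity $\al_i$); likewise $Y_T \in D_{\lm}(V)$ is homogeneous of multidegree $\al$. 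The maps $d_{\lm}(V)$ and $d'_{\lm}(V)$ are $GL(V)$-equivariant, as they are defined in \cite{ABW} as composites of comultiplications, tensor-factor permutations and multiplications; in particular they are homogeneous for the multigrading, so $L_{\lm}(V)$ and $K_{\lm}(V)$ are $\mathbb{Z}_{\ge 0}^n$-graded and $d_{\lm}(V)(X_T) \in L_{\lm}(V)_\al$, $d'_{\lm}(V)(Y_T) \in K_{\lm}(V)_\al$.

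Now I would invoke Theorem~\ref{Bthm}: the set $\{d_{\lm}(V)(X_{T'}) : T' \in \sst_{\lm}([n])\}$ is a basis of $L_{\lm}(V)$ consisting of homogeneous elements, with $d_{\lm}(V)(X_{T'})$ of multidegree equal to the weight of $T'$. Grouping this basis by multidegree shows that $\{d_{\lm}(V)(X_{T'}) : T' \in \sst_{\lm}([n]),\ \mathrm{weight}(T') = \al\}$ is a basis of the graded piece $L_{\lm}(V)_\al$. Expanding $d_{\lm}(V)(X_T) \in L_{\lm}(V)_\al$ in this basis yields exactly the asserted expression, with the understanding that if $X_T = 0$ the empty linear combination does the job. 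The Weyl-module statement is proved in the same way, with $D_{\lm}(V)$, $d'_{\lm}(V)$, $K_{\lm}(V)$ and $\csst_{\lm}([n])$ replacing $\La^{\lm}(V)$, $d_{\lm}(V)$, $L_{\lm}(V)$ and $\sst_{\lm}([n])$.

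There is no serious obstacle here: the one thing to verify is that the structure maps preserve the weight grading, which is immediate from their $GL(V)$-equivariance. This is also precisely the content of the remark that the proposition "follows from the proof of Theorem~\ref{Bthm}": the straightening algorithm of \cite{ABW} rewrites $d_{\lm}(V)(X_T)$ as a combination of semistandard terms using shuffle/Garnir-type relations that are themselves homogeneous for the weight grading, so the weights of the terms occurring never change; the argument above merely packages this observation once via the grading instead of re-running the algorithm.
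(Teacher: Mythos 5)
Your proof is correct, and it takes a cleaner route than the one the paper points to. The paper simply refers the reader to the proof of the basis theorem in \cite{ABW}; the implicit argument there is to re-run the straightening algorithm and observe that each rewriting step (the exchange/Garnir-type relations on $\La^{\lm}(V)$ and $D_{\lm}(V)$) is homogeneous for the content grading, so the weight never changes. You instead deduce the proposition from the \emph{statement} of Theorem~\ref{Bthm} alone, using only that $d_{\lm}(V)$ and $d'_{\lm}(V)$ are $\GL(V)$-equivariant (hence torus-equivariant, hence weight-preserving): since the basis $\{d_{\lm}(V)(X_{T'}):T'\in\sst_{\lm}([n])\}$ consists of weight vectors and $d_{\lm}(V)(X_T)$ lies in the weight-$\al$ component of $L_{\lm}(V)$, only basis vectors of weight $\al$ can occur in its expansion. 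This buys self-containment—you treat the basis theorem as a black box rather than reopening its proof—at essentially no extra cost, since the $\GL(V)$-equivariance of $d_{\lm}$ and $d'_{\lm}$ is built into their definition in \cite{ABW} as composites of comultiplications, permutations of tensor factors, and multiplications. You also correctly handle the degenerate case $X_T=0$. The two arguments ultimately rest on the same fact (the structure maps respect the weight grading), but your packaging via the weight-space decomposition of $L_{\lm}(V)$ and $K_{\lm}(V)$ is a genuine simplification of the cited route.
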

\subsection{Polynomial representations of $\GL_n(\Bbbk)$ and rational representations of $\M_n(\Bbbk)$}\label{eqCat}  By $M_{\Bbbk}(n,r)$ we denote the category of homogeneous polynomial representations of $\GL_n(\Bbbk)$ of degree $r$ and by $P_{\Bbbk}(n,r)$ we denote the category of rational representations of $\M_n(\Bbbk)$ of degree $r$ \cite[p. 4-5]{Gr}. Every module $M$ in $P_{\Bbbk}(n,r)$ may be considered a module in $M_{\Bbbk}(n,r)$ via  the inclusion $\GL_n(\Bbbk) \hookrightarrow \M_n(\Bbbk)$. Now since $\Bbbk$ is an infinite field, the set $\GL_n(\Bbbk)$ is Zariski dense in $\M_n(\Bbbk)$. This implies that for every module $M$ in $M_{\Bbbk}(n,r)$, the action of $\GL_n(\Bbbk)$ may be extended uniquely to an action of $\M_n(\Bbbk)$ on $M$ yielding an object in $P_{\Bbbk}(n,r)$. Thus we may regard homogeneous polynomial representations of $\GL_n(\Bbbk)$ of degree $r$ as rational representations of $\M_n(\Bbbk)$ of degree $r$ and conversely. In fact we have the following, see \cite[Example 3, \textsection 1]{Gr} or \cite[2.2 Proposition]{Dot}. \begin{proposition}\label{eqcat}The categories $M_{\Bbbk}(n,r)$ and $P_{\Bbbk}(n,r)$ are equivalent. \end{proposition} In this paper, we will often use concepts and results for the category $M_{\Bbbk}(n,r)$  (from \cite{ABW}, \cite{Gr} and \cite{W}) and state them in their $P_{\Bbbk}(n,r)$ equivalent form. For example, we may regard the Schur functor $f: M_{\Bbbk}(n,r) \to \m \mathfrak{S}_r$ (see the next section) as a functor from $P_{\Bbbk}(n,r)$ to $\m \mathfrak{S}_r$. 

\subsection{Schur functor and Specht modules}\label{Sch} Let $\La(n,r)$ be the set of sequences $\al=(\al_1, \dots, \al_n)$ of nonnegative integers such that $\al_1+\cdots+\al_n=r$. We call the elements of $\La(n,r)$ \textit{weights}. We denote the subset of diagonal matrices in $\GL_n(\Bbbk)$ by $T_n(\Bbbk)$. For a module $M \in M_{\Bbbk}(n,r)$ and a weight $\al \in \La(n,r)$, define the $\al$- \textit{weight space} of $M$ by \[M^{\al}:= \{ v \in M: \diag (t_1,\dots,t_n)v = t^{\al_1}_1\cdots t^{\al_n}_nv, \mathrm{\ for \ all} \diag (t_1, \dots, t_n) \in T_n(\Bbbk) \}.\] Since $\Bbbk$ is an infinite field, we have the weight space decomposition of $M$ \begin{equation}\label{ws}M=\bigoplus_{\al \in \La(n,r)}M^{\al}.\end{equation}
If $h:M \to N$ is a morphism in $P_{\Bbbk}(n,r)$, then for every $\al \in \La(n,r)$ \begin{equation}\label{wpres} h(M^\al) \subseteq N^\al.\end{equation}

We have the action of $\mathfrak{S}_n$ on  $\Lambda(n,r)$ given by  \begin{equation}\label{sigmaa}\sigma\alpha = (\alpha_{\sigma^{-1}(1)}, \dots, \alpha_{\sigma^{-1}(n)}). \end{equation}

	Suppose $n \ge r$. Recall from \cite{Gr} that the \textit{Schur functor} is a functor $f$ from the category $M_{\Bbbk}(n,r)$ of homogeneous polynomial representations of  $\GL_n(\Bbbk)$ of degree $r$ to the category of $\mathfrak{S}_r$-modules. For $M$ an object in the first category,  $f(M)$ is the weight subspace $M^\alpha$ of $M$, where $\alpha = (1^r, 0^{n-r})$ and for $\theta :M \to N$ a morphism in the first category, $f(\theta)$  is the restriction $M^\alpha \to N^\alpha$ of $\theta$.
	
	For a skew partition $\la / \mu$ let us define the \textit{skew Specht module} $S^{\lambda / \mu }$ and the \textit{dual skew Specht module} $S_{\lambda / \mu }$ by \begin{equation}\label{fdef}S^{\lambda / \mu }:=f(L_{\lambda' / \mu'}(V_n)), \ \ S_{\lambda / \mu }:=f(K_{{\la} / \mu}(V_n)),\end{equation} where $V_n$ is the natural $\GL_n(\Bbbk)$-module of column vectors. These are  $\mathfrak{S}_r$-modules for $r= |\la / \mu|$. For example, $S^{(r)}$ is the trivial representation and $S^{(1^r)}$ is the sign representation.
	
	Let $f^{\la / \mu} := \dim(S^{\la / \mu}) \ \text{and} \ f_{\la / \mu} := \dim(S_{\la / \mu}).$ 
	Recall that a \textit{standard tableau} $T \in \tab_{\lambda / \mu}([r])$ is a semistandard tableau that has distinct entries. Then both $f^{\lambda / \mu}$ and $f_{\lambda / \mu}$ are equal to the number of standard tableaux $T \in \sst_{\lambda' / \mu'}([r])$. 

	\subsection{Table of frequently used notation} For the reader's convenience we gather here notation used frequently in the paper.
	\begin{itemize}[leftmargin=*]\smaller{
		\item $\IS_n, \PT_n$ and $\T_n$: symmetric inverse monoid, partial transformation monoid and full transformation monoid, respectively. Section \ref{S1}.
		\item $\M_n(\Bbbk)$, $\GL_n(\Bbbk)$ and $\mathfrak{S}_n$: monoid of $n\times n$ matrices over $\Bbbk$, general linear group of $n\times n$ matrices over $\Bbbk$ and symmetric group of degree $n$, respectively.
		\item $V_n$ and $U_n$: natural $\GL_n(\Bbbk)$-module and standard $\mathfrak{S}_n$-module, respectively. Section \ref{Sch} and Section \ref{LS}.
		\item $L_{\lambda / \mu}(V_n)$, $K_{\lambda / \mu}(V_n)$, $S^{\lambda / \mu}$ and $S_{\lambda / \mu}$: Schur module, Weyl module, Specht module and dual Specht module corresponding to the skew partition $\lambda / \mu$, respectively. Section \ref{Sm} and Section \ref{Sch}.
		\item $\Mo(n)$: submonoid of $\PT_n$.
		\item $P_{\Bbbk}(n,r)$ and $M_{\Bbbk}(n,r)$: category of rational (respectively, polynomial) representations of $\M_n(\Bbbk)$ (respectively, $\GL_n(\Bbbk)$) of degree $r$. Section \ref{eqCat}.
		\item $\mathcal{F}: P_{\Bbbk}(n,r) \to \m \mathfrak{S}_n$: the functor that assigns to every  $P_{\Bbbk}(n,r)$-module $M$ the $\mathfrak{S}_n$-module $M'$. Section \ref{Ff}.
		\item $\G_{\Mo (n)}: P_{\Bbbk}(n,r) \to \m \Mo(n)$: symmetrized Schur functor associated to $\Mo(n)$. Section \ref{Gf}.
		\item $\Lambda(n,r)'$ and $\Lambda(n,r)''$: particular sets of weights. Definition \ref{X}.
		\item $M'$ and $M''$: particular subspaces of a $P_{\Bbbk}(n,r)$-module $M$. Definition \ref{X}.
		\item ${\R(n)}^{\lambda / \mu}$ and ${\R(n)}_{\lambda / \mu}$: analogs of Specht and dual Specht modules for $\Mo(n)$ corresponding to the skew partition ${\lambda / \mu}$, respectively. Definition \ref{Specht}.
		\item $J_{m,n}(\PT)$, $J_{m,n}(\IS)$ and $J_{m,n}(\T)$: ideals of $\Bbbk[\textbf{x}_{m \times n}]$ associated to the monoids $\PT_n, \IS_n$ and $\T_n$, respectively. Definition \ref{Iideals}.
	}\normalsize
	\end{itemize}

	\section{Symmetrized Schur functor}
In Section 3.2 we define for any submonoid $\Mo(n)$ of the partial transformation monoid $\PT_n$ a functor	$\G_{\Mo(n)}:P_{\Bbbk}(n,r) \to \m\Mo(n)$ that attaches to a rational representation of the monoid $\M_n(\Bbbk)$ a finite dimensional representation of $\Mo(n)$. First we need some preparation concerning weights of $P_{\Bbbk}(n,r)$-modules. This is done in Section 3.1.  In Section 3.4 we study properties of the functor	$\G_{\Mo(n)}$.

\subsection{Weight spaces and submonoids of $\PT_n$} We define here certain weights of a $P_{\Bbbk}(n,r)$-module $M$ that are central for our purpose and we show a relevant lemma.
\begin{definition}\label{X} Suppose $n,r$ are positive integers and $M$ is a $P_{\Bbbk}(n,r)$-module. \begin{itemize}[leftmargin=*]\item Let $\Lambda(n,r)'$ be the subset of $\Lambda(n,r)$ consisting of weights $\alpha =(\alpha_1, \dots, \alpha_n)$ such that  $\alpha_i \in \{{0,1}\}$ for all $i$. Define $M':=\bigoplus_{\alpha \in \Lambda(n,r)'} M^\alpha$.\item Let $\Lambda(n,r)''$ be the subset of $\Lambda(n,r)$ consisting of weights $\alpha =(\alpha_1, \dots, \alpha_n)$ such that $\alpha_j \ge 2$ for some $j$. Define $M'':=\bigoplus_{\alpha \in \Lambda(n,r)''} M^\alpha$. \end{itemize}
\end{definition}

The set $\Lambda(n,r)''$ is the complement of $\Lambda(n,r)'$ in $\Lambda(n,r)$ and thus by (\ref{ws}) \begin{equation}\label{sum}M = M' \bigoplus M''.\end{equation}  

 \begin{lemma}\label{sumSn} Let $M \in P_{\Bbbk}(n,r)$.\begin{enumerate}[leftmargin=*]\item[\textup{(1)}] $M'$ and $M''$ are $\mathfrak{S}_n$-submodules of $M$.\item[\textup{(2)}] For a submonoid $\Mo(n) \subseteq \PT_n$, $M''$ is an $\Mo(n)$-submodule of $M$ via the inclusion $\Mo(n) \hookrightarrow \M_n(\Bbbk)$.
\end{enumerate}\end{lemma}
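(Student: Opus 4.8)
The plan is to base both parts on a single computation describing how an element $A$ of $\PT_n$ permutes or collapses the torus weight spaces of $M$. Write $p=p_A$ for the associated partial map and, for a tuple $t=(t_1,\dots,t_n)$ of nonzero scalars, let $D_t=\diag(t_1,\dots,t_n)\in T_n(\Bbbk)$. Checking on the standard column basis vectors $e_1,\dots,e_n$ (with $Ae_j=e_{p(j)}$ for $j\in\dom(p)$ and $Ae_j=0$ otherwise) one verifies the identity $D_tA=AD_{s(t)}$ in $\M_n(\Bbbk)$, where $s(t)\in T_n(\Bbbk)$ is diagonal with $s(t)_j=t_{p(j)}$ for $j\in\dom(p)$ and $s(t)_j=1$ otherwise. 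Hence for $v\in M^\alpha$ we get $D_t(Av)=A\bigl(D_{s(t)}v\bigr)=s(t)^\alpha\,(Av)$, and collecting the exponent of each $t_i$ shows $s(t)^\alpha=t^\beta$ with $\beta_i:=\sum_{j\in p^{-1}(i)}\alpha_j$. Expanding $Av$ in the weight decomposition (\ref{ws}) and comparing the two sides of $D_t(Av)=t^\beta(Av)$, the linear independence of the monomials $t^\gamma$ over the infinite field $\Bbbk$ forces $Av\in M^\beta$, where $|\beta|=\sum_{j\in\dom(p)}\alpha_j\le r$. I would isolate this as the key sublemma: $A(M^\alpha)\subseteq M^\beta$ with $\beta$ as above and the convention $M^\beta:=0$ when $|\beta|<r$.

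For part (1), I would apply the sublemma to a permutation matrix $\sigma\in\mathfrak{S}_n\subseteq\PT_n$. Then $p_\sigma$ is a bijection of $[n]$, so $\beta$ is simply the permuted weight $\sigma\alpha$ of (\ref{sigmaa}) and $|\sigma\alpha|=r$; since $\sigma$ is invertible, $\sigma(M^\alpha)=M^{\sigma\alpha}$. Now both $\Lambda(n,r)'$ and $\Lambda(n,r)''$ are stable under the $\mathfrak{S}_n$-action on $\Lambda(n,r)$, because reordering the entries of a weight neither creates nor destroys an entry $\ge 2$. Therefore $M'=\bigoplus_{\alpha\in\Lambda(n,r)'}M^\alpha$ and $M''=\bigoplus_{\alpha\in\Lambda(n,r)''}M^\alpha$ are $\mathfrak{S}_n$-submodules.

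For part (2), since $M''$ is the span of the weight spaces $M^\alpha$ with $\alpha\in\Lambda(n,r)''$ and $\Mo(n)\subseteq\PT_n$, it is enough to show $A(M^\alpha)\subseteq M''$ for every $A\in\PT_n$ and every $\alpha\in\Lambda(n,r)''$. Fix an index $j_0$ with $\alpha_{j_0}\ge 2$. If $j_0\notin\dom(p_A)$, then $|\beta|=\sum_{j\in\dom(p_A)}\alpha_j\le r-\alpha_{j_0}<r$, so $A(M^\alpha)=0\subseteq M''$ by the sublemma. If $j_0\in\dom(p_A)$, then $\beta_{p_A(j_0)}=\sum_{j\in p_A^{-1}(p_A(j_0))}\alpha_j\ge\alpha_{j_0}\ge 2$, so either $|\beta|<r$ and $A(M^\alpha)=0$, or $\beta\in\Lambda(n,r)''$ and $A(M^\alpha)\subseteq M^\beta\subseteq M''$. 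In all cases $A(M^\alpha)\subseteq M''$, so $M''$ is a $\PT_n$-submodule and in particular an $\Mo(n)$-submodule.

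The one point that needs care is the implication "$D_t(Av)=t^\beta(Av)\ \Rightarrow\ Av\in M^\beta$ or $Av=0$": the exponent vector $\beta$ attached to a matrix that is not a bijection can have coordinate sum strictly less than $r$, so $\beta$ may fail to be a legitimate weight of a degree-$r$ module, and one must invoke homogeneity together with the infiniteness of $\Bbbk$ (linear independence of the characters of $T_n(\Bbbk)$) to conclude that $Av$ then vanishes. Once this is in place the rest is bookkeeping with weights; it is also worth noting, as the argument makes clear, that $M''$ is actually $\PT_n$-stable, whereas $M'$ is generally only $\mathfrak{S}_n$-stable, since a non-injective $p_A$ can carry a weight of $\Lambda(n,r)'$ to one of $\Lambda(n,r)''$.
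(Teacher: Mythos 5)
Your proof is correct and follows the same core argument as the paper: both rest on a conjugation identity $D_t A = A D'$ for a diagonal matrix $D'$ determined by the partial map $p_A$, and then track how $A$ moves the torus weight spaces. The one divergence is bookkeeping. The paper extends $p_A$ to an arbitrary total map $\phi_A:[n]\to[n]$ (the values off $\dom(p_A)$ chosen freely), so the exponent vector obtained by collecting powers of the $t_i$ automatically has total degree $r$ and the argument never leaves $\Lambda(n,r)$; you instead set $s(t)_j = 1$ for $j\notin\dom(p_A)$, which can produce an exponent vector $\beta$ with $|\beta|<r$, and you therefore handle that case separately by invoking linear independence of the characters of $T_n(\Bbbk)$ (valid since $\Bbbk$ is infinite) to conclude $Av=0$. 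Both choices are sound; the paper's is marginally shorter, yours makes the degenerate case explicit. Your closing observation that $M''$ is actually $\PT_n$-stable while $M'$ is in general only $\mathfrak{S}_n$-stable matches a remark the paper makes just after the definition of $\G_{\Mo(n)}$.
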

\begin{proof}
	We first prove part (2). If $A=(a_{ij}) \in \PT_n$, let $\phi_{A}$ be any map $\phi: [n] \to [n]$ such that for all $i,j \in [n]$ we have 
		$\phi_{A}(j)=i$ if $a_{ij} =1.$
		Such a map $\phi_{A}$ exists since each column of $A$ has at most one entry equal to 1. 
We claim that for all $t_1,...,t_n \in \Bbbk$ we have \[\diag(t_1, \dots, t_n) A = A \diag(t_{\phi_{A}(1)}, \dots, t_{\phi_A(n)}).\]
Indeed, if $A=(a_{ij})$, then from matrix multiplication it follows that the $(i,j)$ entry of $\diag(t_1, \dots, t_n) A$ is equal to $t_ia_{ij}$ and the $(i,j)$ entry of $A \diag(t_{\phi_A(1)}, \dots, t_{\phi_A(n)})$ is equal to $a_{ij}t_{\phi_A(j)}$.  These entries are equal since, if $a_{ij}\neq 0$,  then $\phi_A(j)=i$ by definition of $\phi_{A}$.

Now let $\beta \in \Lambda(n,r)''$ and $v \in M^\beta$. Consider a matrix $A \in \Mo(n)$. We will show that $Av \in M''$.

Choose a map $\phi_A: [n] \to [n]$ as above. Using the above claim, we have for all $t_1, \dots,t_n \in \Bbbk$,
\begin{align}\label{Av}\diag(t_1,t_2, \dots, t_n)Av &= A \diag(t_{\phi_{A}(1)}, \dots, t_{\phi_A(n)})v\\\nonumber&=At_{\phi_A(1)}^{\beta_1}t_{\phi_A(2)}^{\beta_2} \cdots t_{\phi_A(n)}^{\beta_n} v\\\nonumber&
	=t_{\phi_A(1)}^{\beta_1}t_{\phi_A(2)}^{\beta_2} \cdots t_{\phi_A(n)}^{\beta_n} Av. 
\end{align}
The integers $\phi_A(1), \dots, \phi_A(n)$ in the product $t_{\phi_A(1)}^{\beta_1}t_{\phi_A(2)}^{\beta_2} \cdots t_{\phi_A(n)}^{\beta_n}$ may not be distinct. But since for some $j$ we have $\beta_j \ge 2$, it follows that this product is equal to $t_{1}^{\gamma_1}t_{2}^{\gamma_2} \cdots t_{n}^{\gamma_n}$ for some $\gamma=(\gamma_1, \gamma_2, \dots, \gamma_n) \in \Lambda(n,r)$ that satisfies $\gamma_q \ge2$ for some $q$. So $\gamma \in \Lambda(n,r)''$. From eq. (\ref{Av}) it follows that $Av \in M^{\gamma}$. Hence $Av \in M''$ as desired.

To show part (1) of the lemma, let $A \in \M_n(\Bbbk)$ be a permutation matrix and $\sigma:=\phi_A$ the corresponding permutation. Then from (\ref{Av}) we have  $Av \in M^{\sigma\al}$ for every $v \in M^\al$ and $\al \in \La(n,r)$, where $\sigma \al =(\al_{\sigma^{-1}(1)}, \dots, \al_{\sigma^{-1}(n)})$. Thus $M'$ and  $ M''$ are $\mathfrak{S}_n$-submodules of $M$. \end{proof}

\subsection{The symmetrized Schur functor $\G_{\Mo(n)}$ }\label{Gf}
Consider the decomposition (\ref{sum}) for $M \in P_\Bbbk(n,r)$.  If $\eta :M \to N$ is a morphism in the category $P_{\Bbbk}(n,r)$, then the image under $\eta$ of the
		  subspace $M'' $ of $M$ is contained
		   in the subspace $N''$ of
		    $N$ according to (\ref{wpres}). Hence we have the induced map on quotients  ${M}/M'' \to {N}/N''$. By Lemma  \ref{sumSn}(2), this induced map is a homomorphism of $\Mo(n)$-modules when $M$ is an $\Mo(n)$-module, for any submonoid $\Mo(n)$ of $\PT_n$. Thus we may give the following definition.
		
\begin{definition}\label{mSchurf} Let $\Mo(n)$ be a submonoid of $\PT_n$. The \textit{symmetrized  Schur functor} $\G_{\Mo (n)}: P_{\Bbbk}(n,r) \to \m \Mo(n)$ is the functor that assigns   \begin{itemize}\item to a module $M$, the $\Mo(n)$-module $\G_{\Mo(n)}(M):={M}/M'',$
		\item to a morphism $\theta :M \to N$, the induced  morphism $\G_{\Mo(n)}(M) \to \G_{\Mo(n)}(N)$.\end{itemize}
\end{definition}

\begin{example}\label{exSn}Suppose $\Mo(n)$ is the symmetric group $\mathfrak{S}_n$. 
	
	(1) From Lemma \ref{sumSn}(1) we have that $\G_{\mathfrak{S}_n} (M)$ is isomorphic to $M'$ as $\mathfrak{S}_n$-modules for all $M \in P_{\Bbbk}(n,r)$. 
	
	(2) If moreover $r=n$, then the set $ \Lambda(n,r)'$ consists of a single weight $\omega = (1, \dots, 1)$ (the 1 appears $n$ times). Thus in this case $\G_{\mathfrak{S}_n} (M)$ is isomorphic to $f(M)$, where $f: P_{\Bbbk}(n,n)\to \m \mathfrak{S}_n$ is the Schur functor (cf. Section \ref{Sch}).\end{example}

\begin{remarks}Let $\Mo(n)$ be a submonoid of $\PT_n$ and let $M \in P_{\Bbbk}(n,r)$.
	
	(1) Note that the underlying vector space of the $\Mo(n)$-module $\G_{\Mo(n)}(M)$ does not depend on the choice of $\Mo(n)$. The $\Mo(n)$-module  $\G_{\Mo(n)}(M)$ is equal to the restriction of the $\PT_n$-module  $\G_{\PT_n}(M)$ to $\Mo(n)$.

(2) We note that in (\ref{sum}), while  the subspace $M''$ of $M$ is an $\Mo(n)$-submodule of $M$ according to Lemma \ref{sumSn}(2), the other  subspace $M'$ is in general \textit{not} an $\Mo(n)$-submodule of $M$. For example, let $\Mo(n)=\T_n$ be the full transformation monoid and $M=Sym_r(V_n)$ the $r$th symmetric power of the natural $\M_n(\Bbbk)$-module $V_n$, where $2 \le r \le n$. Let $\{e_1, e_2, \dots, e_n\}$ be the canonical basis of $V_n$ and let $A$ be the matrix $A\in \T_n$ such that every element in the first row is equal to $1$. Then the element $e_1e_2\cdots e_r \in Sym_r(V_n)$ has weight $(1^r,0^{n-r})$, while the element $A(e_1e_2\cdots e_r)=e_1^r$ has weight $(r,0^{n-r})$. This is the reason why in Definition  \ref{mSchurf} we have the quotient $\G_{\Mo(n)}(M):=M/M''$ (and not $M'$).
\end{remarks}

\subsection{The functor $\F$}\label{Ff}
We now consider a functor needed in the next subsection.

\begin{definition}\label{F} Let $\mathcal{F}: P_{\Bbbk}(n,r) \to \m\mathfrak{S}_n$ be the functor that assigns   \begin{itemize}\item to a module $M$, the $\mathfrak{S}_n$-module $\mathcal{F}(M):=M',$
		\item to a morphism $M \to N$, the induced  morphism  $M' \to N'.$\end{itemize}
\end{definition}
It follows from Example \ref{exSn}(1) that the functor $\mathcal{F}: P_{\Bbbk}(n,r) \to \m\mathfrak{S}_n$ is naturally equivalent  to the particular symmetrized Schur functor $\G_{\mathfrak{S}_n}: P_{\Bbbk}(n,r) \to \m \mathfrak{S}_n$.
\begin{example} Let $\{e_1, e_2, \dots, e_n \}$ be the canonical basis of the natural $\M_n(\Bbbk)$-module $V_n$. If $M=Sym_r(V_n)$ is the $r$th symmetric power of $V_n$, then $\F(M)$ is the subspace of $Sym_r(V_n)$ spanned by the square free monomials in $e_1, e_2, \dots, e_n$ of degree $r$. If $M=\Lambda^r(V_n)$ is the $r$th exterior power of $V_n$, then $\F(M)=\Lambda^r(V_n)$.
\end{example}

\subsection{Some properties of the symmetrized Schur functor}
\begin{proposition}\label{exactFM}For a submonoid $\Mo(n)$ of $\PT_n$, the functor $ \G_{\Mo (n)}: P_{\Bbbk}(n,r) \to \m\Mo(n)$ is exact. 
\end{proposition}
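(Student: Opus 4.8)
The plan is to verify exactness directly from the construction $\G_{\Mo(n)}(M) = M/M''$, reducing everything to statements about the category $P_{\Bbbk}(n,r)$ and the behaviour of weight-space decompositions under morphisms. First I would recall that $P_{\Bbbk}(n,r)$ is an abelian category (being equivalent to $M_{\Bbbk}(n,r)$, hence to modules over the Schur algebra), so it makes sense to speak of exactness of an additive functor to $\m\Mo(n)$; and exactness is equivalent to preserving short exact sequences. So let $0 \to M_1 \xrightarrow{\alpha} M_2 \xrightarrow{\beta} M_3 \to 0$ be exact in $P_{\Bbbk}(n,r)$, and I must show $0 \to M_1/M_1'' \to M_2/M_2'' \to M_3/M_3'' \to 0$ is exact in $\m\Mo(n)$. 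Since the maps on quotients are already known to be $\Mo(n)$-module homomorphisms (this was established right before Definition \ref{mSchurf}), the content is purely about vector spaces.

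The key observation is that the weight-space decomposition (\ref{ws}) is functorial: by (\ref{wpres}) every morphism $h\colon M \to N$ in $P_{\Bbbk}(n,r)$ restricts to $h^\alpha\colon M^\alpha \to N^\alpha$ for each $\alpha \in \Lambda(n,r)$, and $h = \bigoplus_\alpha h^\alpha$. Consequently the functor $M \mapsto M^\alpha$ is exact for each fixed $\alpha$: exactness of a sequence of direct sums is equivalent to exactness in each summand, so $0 \to M_1 \to M_2 \to M_3 \to 0$ being exact forces $0 \to M_1^\alpha \to M_2^\alpha \to M_3^\alpha \to 0$ to be exact for every $\alpha$. Now $M'' = \bigoplus_{\alpha \in \Lambda(n,r)''} M^\alpha$ is a sub-direct-sum, so $M \mapsto M''$ is exact, and likewise $M \mapsto M' = \bigoplus_{\alpha \in \Lambda(n,r)'} M^\alpha$ is exact; indeed $M = M' \oplus M''$ compatibly with the maps, so $M_2/M_2'' \cong M_2'$ carries the induced maps from $M_1'$ and $M_3'$. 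Then $0 \to M_1' \to M_2' \to M_3' \to 0$ is exact because it is the direct summand of the original sequence indexed by $\Lambda(n,r)'$, and this is exactly the sequence $0 \to \G_{\Mo(n)}(M_1) \to \G_{\Mo(n)}(M_2) \to \G_{\Mo(n)}(M_3) \to 0$ at the level of underlying vector spaces. Since the maps are $\Mo(n)$-homomorphisms and the sequence is exact as vector spaces, it is exact in $\m\Mo(n)$.

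The main (mild) obstacle is bookkeeping: one must be careful that the decomposition $M = M' \oplus M''$ is natural in $M$, i.e. that a morphism $\alpha\colon M_1 \to M_2$ sends $M_1'$ into $M_2'$ and $M_1''$ into $M_2''$ — but this is immediate from (\ref{wpres}) together with the fact that $\Lambda(n,r)'$ and $\Lambda(n,r)''$ are fixed subsets of $\Lambda(n,r)$ not depending on the module. Given that, the identification of $\G_{\Mo(n)}(M)$ with $M'$ as a vector space (already noted for $\mathfrak{S}_n$ in Example \ref{exSn}, but true for the underlying space regardless of the monoid) turns the claim into the triviality that a functor sending a short exact sequence to the summand indexed by a fixed index set is exact. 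I would also remark that exactness of $\G_{\Mo(n)}$ follows at once from exactness of $\F$ (Definition \ref{F}) composed with the forgetful-type identification, since $\G_{\Mo(n)}(M)$ and $\F(M)$ share the underlying space $M'$; but I would give the self-contained argument above rather than rely on that.
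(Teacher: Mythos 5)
Your proof is correct and follows essentially the same route as the paper's: both reduce to exactness of the weight-space functors $M \mapsto M^\alpha$, the paper citing Green's \textup{(3.3b)} while you derive it from (\ref{wpres}) together with the direct-sum decomposition (\ref{ws}). The only cosmetic difference is in the final step — you use the natural identification $M/M'' \cong M'$ to conclude directly, whereas the paper shows $M \mapsto M''$ is exact and then invokes a $3\times 3$-lemma argument to pass to the quotients; both are valid and equivalent in content.
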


\begin{proof}By \cite[(3.3b)]{Gr}, if $0 \to M_1 \to M \to M_2 \to 0$ is an exact sequence in $P_{\Bbbk}(n,r)$, then for any weight $\alpha \in \Lambda(n,r)$ the sequence of vector spaces $0 \to M_1^\alpha \to M^\alpha \to M_2^\alpha \to 0$ is exact.  By summing over $\alpha \in \Lambda(n,r)''$ we have  the exact sequence $0 \to \bigoplus_{\alpha}M^{\alpha}_1 \to \bigoplus_{\alpha}M^{\alpha} \to \bigoplus_{\alpha}M^{\alpha}_2 \to 0$. A standard argument using the $3\times3$ lemma yields that $0 \to \G_{\Mo (n)}(M_1)\to \G_{\Mo (n)}(M) \to \G_{\Mo (n)}(M_2) \to 0$ is exact.\end{proof}

\begin{lemma}\label{resFM}
Let $\Mo(n)$ be a submonoid of $\PT_n$ containing $\mathfrak{S}_n$. For a module  $M \in P_{\Bbbk}(n,r)$, the following hold.	\begin{enumerate}[leftmargin=*]
	\item[\textup{(1)}]The restriction of the $\Mo(n)$-module $\G_{\Mo(n)}(M)$ to $\mathfrak{S}_n$ is isomorphic to $\F(M)$.
	\item[\textup{(2)}]$\dim (\G_{\Mo(n)}(M)) = \binom{n}{r} \dim(f(M))$, where $f: P_{\Bbbk}(n,r)\to \m \mathfrak{S}_r$ is the Schur functor.
	\end{enumerate} 
\end{lemma}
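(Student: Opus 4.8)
The plan is to deduce both parts from the direct-sum decomposition (\ref{sum}) together with the transitivity of the $\mathfrak{S}_n$-action on the set $\Lambda(n,r)'$ of $0$--$1$ weights.

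For part (1), I would observe that $\G_{\Mo(n)}(M) = M/M''$ by Definition \ref{mSchurf} and $\F(M) = M'$ by Definition \ref{F}. By (\ref{sum}) we have $M = M' \oplus M''$, so the composite $M' \hookrightarrow M \twoheadrightarrow M/M''$ is an isomorphism of vector spaces. It remains to check that this composite is $\mathfrak{S}_n$-equivariant when $M/M''$ carries the $\mathfrak{S}_n$-action obtained by restricting its $\Mo(n)$-action: by Lemma \ref{sumSn}(1), $M''$ is an $\mathfrak{S}_n$-submodule of $M$, so the $\mathfrak{S}_n$-action on the quotient $M/M''$ agrees with the restriction of the $\Mo(n)$-action (which exists since $\mathfrak{S}_n \subseteq \Mo(n)$), and, again by Lemma \ref{sumSn}(1), the inclusion $M' \hookrightarrow M$ is a map of $\mathfrak{S}_n$-modules. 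Hence the restriction of $\G_{\Mo(n)}(M)$ to $\mathfrak{S}_n$ is isomorphic to $M' = \F(M)$.

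For part (2), by part (1) it suffices to compute $\dim M' = \sum_{\alpha \in \Lambda(n,r)'} \dim M^\alpha$. The weights in $\Lambda(n,r)'$ are the $0$--$1$ sequences summing to $r$, so $\alpha \mapsto \{i : \alpha_i = 1\}$ is a bijection onto the $r$-element subsets of $[n]$ and $|\Lambda(n,r)'| = \binom{n}{r}$; in particular $M' = 0$ and $f(M)=0$ if $n < r$, so I may assume $n \ge r$. Since $\mathfrak{S}_n$ acts transitively on the $r$-subsets, for any $\alpha \in \Lambda(n,r)'$ there is a permutation $\sigma$ with $\sigma\alpha = (1^r, 0^{n-r})$, and the corresponding permutation matrix $A$ satisfies $A(M^\alpha) \subseteq M^{\sigma\alpha}$ by (\ref{Av}); applying the same to $A^{-1}$ shows $A \colon M^\alpha \to M^{\sigma\alpha}$ is an isomorphism. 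Thus every $M^\alpha$ with $\alpha \in \Lambda(n,r)'$ has dimension $\dim M^{(1^r,0^{n-r})} = \dim f(M)$, and summing over the $\binom{n}{r}$ weights yields $\dim \G_{\Mo(n)}(M) = \dim M' = \binom{n}{r} \dim f(M)$.

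The argument is almost entirely bookkeeping, and I do not expect a serious obstacle; the one point that needs care is the compatibility of the two $\mathfrak{S}_n$-module structures in part (1) --- namely that the purely linear-algebraic splitting $M = M' \oplus M''$ is in fact a splitting of $\mathfrak{S}_n$-modules, so that quotienting by $M''$ returns $M'$ $\mathfrak{S}_n$-equivariantly --- which is precisely the content of Lemma \ref{sumSn}(1). In part (2) the essential input is the transitivity of the $\mathfrak{S}_n$-action on $\Lambda(n,r)'$ combined with equation (\ref{Av}), which forces all the relevant weight spaces to be isomorphic.
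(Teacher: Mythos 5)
Your proof is correct and follows essentially the same approach as the paper: part (1) is identical (using $\G_{\Mo(n)}(M)=M/M''$, $\F(M)=M'$, the decomposition $M=M'\oplus M''$, and Lemma \ref{sumSn}(1) for $\mathfrak{S}_n$-equivariance), and part (2) reduces to counting $|\Lambda(n,r)'|=\binom{n}{r}$ and showing all weight spaces $M^\alpha$ with $\alpha\in\Lambda(n,r)'$ have the same dimension. The only difference is that in part (2) the paper cites \cite[(3.3a) Proposition]{Gr} for the isomorphism $M^\alpha\cong M^{(1^r,0^{n-r})}$, whereas you re-derive this fact directly from equation (\ref{Av}) via the transitivity of $\mathfrak{S}_n$ on the $0$--$1$ weights --- a self-contained argument for the same ingredient rather than a different route.
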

\begin{proof}(1) By Definition  \ref{mSchurf} we have $\G_{\Mo(n)}(M)=M/M''$ and by Lemma \ref{sumSn}(1) we have that the restriction of $M/M''$ to $\mathfrak{S}_n \subseteq \Mo(n)$ is isomorphic to $M'$. But $M' =\F(M)$ according to Definition \ref{F}.
	
	(2) As a vector space $\G_{\Mo(n)}(M)$ is isomorphic to $M'=\bigoplus_{\alpha \in \Lambda(n,r)'} M^\alpha$. By \cite[(3.3a Proposition)]{Gr}, we have $M^{\alpha} \cong M^{(1^r,0^{n-r})}$ for every $\alpha \in \Lambda(n,r)'$. By definition of the  Schur functor, we have $M^{(1^r,0^{n-r})}=f(M)$. Since the cardinality of the set $\Lambda(n,r)'$ is equal to $\binom{n}{r}$ the result follows.
\end{proof}

For the proof of Theorem \ref{main1} below, we will need the following lemma on monomials in symmetric powers and divided powers. First some notation. Recall we have the canonical basis $\{e_1, \dots, e_n\}$ of the natural $\M_{n}(\Bbbk)$-module $V_n$. For $\alpha=(\alpha_1, \dots, \alpha_n) \in \Lambda(n,r)$ we denote by $e_\alpha \in Sym_r(V_{n})$ and $e_{(\alpha)} \in D_r(V_{n})$ the monomials \[e_\alpha:=e_{1}^{\alpha_1} \cdots e_{n}^{\alpha_n}, \  e_{(\alpha)}:=e_{1}^{(\alpha_1)} \cdots e_{n}^{(\alpha_n)}.\] We recall that the action of the symmetric group $\mathfrak{S}_n$ on $e_\alpha$ and $e_{(\alpha)}$ is given by \begin{equation}\label{sigmaea}\sigma e_\alpha = {e_{\sigma(1)}}^{\alpha_1} \cdots {e_{\sigma(n)}}^{\alpha_n}, \ \sigma e_{(\alpha)} = {e_{\sigma(1)}}^{(\alpha_1)} \cdots {e_{\sigma(n)}}^{(\alpha_n)}\end{equation} and the action of $\mathfrak{S}_n$ on  $\Lambda(n,r)$ is given by (\ref{sigmaa}).

\begin{remark}\label{hat} Suppose $\alpha = (\alpha_1, \dots, \alpha_n)\in \Lambda(n,r)'$. We define $\widehat{\alpha} \in \Lambda(n,n-r)'$ by \[\widehat{\alpha}= (\widehat{\alpha}_1, \widehat{\alpha}_2, \dots, \widehat{\alpha}_n):=(1-\alpha_1, 1-\alpha_2, \dots, 1-\alpha_n).\] Then for every $\sigma \in \mathfrak{S}_n$ we have $\sigma e_{\widehat{\alpha}} = e_{\widehat{\sigma(\alpha)}}$ and $\sigma e_{(\widehat{\alpha})} = e_{(\widehat{\sigma(\alpha)})}$
	\end{remark}
	\begin{proof} Using eqs. (\ref{sigmaea}) and (\ref{sigmaa}) we have \begin{align*}\sigma e_{\widehat{\alpha}} = {e_{\sigma(1)}}^{\widehat{\alpha}_1}{e_{\sigma(2)}}^{\widehat{\alpha}_2} \cdots {e_{\sigma(n)}}^{\widehat{\alpha}_n},\ \  e_{\widehat{\sigma(\alpha)}}= e_1^{\widehat{\alpha}_{\sigma^{-1}(1)}}e_2^{\widehat{\alpha}_{\sigma^{-1}(2)}} \cdots e_n^{\widehat{\alpha}_{\sigma^{-1}(n)}}.\end{align*}
		The right hand sides of the above equations are equal since multiplication in the symmetric algebra is commutative. The proof of $\sigma e_{(\widehat{\alpha})} = e_{(\widehat{\sigma(\alpha)})}$ is similar.\end{proof}

Suppose we have modules $N_1 \in P_{\Bbbk}(n,r_1)$ and $N_2 \in P_{\Bbbk}(n,r_2)$ with $r_1+r_2 \le n$. The monoid $\M_n(\Bbbk)$ acts on the tensor product $N_1 \otimes N_2$ by $A(u \otimes v)=Au \otimes Av$, where $A \in \M_n(\Bbbk)$ and $u \in N_1, v \in N_2$. Then we have  $N_1 \otimes N_2 \in  P_{\Bbbk}(n,r_1+r_2)$ and we may consider the image $f(N_1\otimes N_2)$ of $N_1\otimes N_2$ under the Schur functor $f:M_{\Bbbk}(n,r_1+r_2) \to \m\mathfrak{S}_{r_1+r_2} $.

The main result of the present section is the next theorem and its corollary.

\begin{theorem}\label{main1}Let  
	$M \in P_{\Bbbk}(n,r)$. Then as $\mathfrak{S}_n$-modules \[\F(M) \cong f(M \otimes Sym_{n-r}(V_{n})) \cong f(M \otimes D_{n-r}(V_{n})),\]
	where $f:P_{\Bbbk}(n,n) \to \m\mathfrak{S}_n $ is the Schur functor. 
\end{theorem}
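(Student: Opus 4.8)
The plan is to produce explicit mutually inverse $\mathfrak{S}_n$-equivariant isomorphisms between $\F(M) = M'$ and $f(M \otimes Sym_{n-r}(V_n))$, and then argue the case of $D_{n-r}(V_n)$ identically with divided-power monomials replacing symmetric-power monomials. Recall $f(M \otimes Sym_{n-r}(V_n))$ is the weight space $(M \otimes Sym_{n-r}(V_n))^{(1^n)}$, which by the weight-space decomposition of the tensor product breaks up as $\bigoplus_{\alpha \in \Lambda(n,r)'} M^\alpha \otimes Sym_{n-r}(V_n)^{\widehat{\alpha}}$, where $\widehat{\alpha}$ is the complementary $\{0,1\}$-weight of Remark~\ref{hat} (the only weights of $Sym_{n-r}(V_n)$ that pair with a weight $\alpha$ of $M$ to give total weight $(1^n)$ are those $\beta \in \Lambda(n,n-r)$ with $\alpha + \beta = (1^n)$, forcing $\alpha \in \Lambda(n,r)'$ and $\beta = \widehat{\alpha}$). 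Moreover each $Sym_{n-r}(V_n)^{\widehat{\alpha}}$ is one-dimensional, spanned by the square-free monomial $e_{\widehat\alpha}$. Hence the obvious map
\[
\Phi: \F(M) = \bigoplus_{\alpha \in \Lambda(n,r)'} M^\alpha \longrightarrow \bigoplus_{\alpha \in \Lambda(n,r)'} M^\alpha \otimes \Bbbk\, e_{\widehat\alpha} = f(M \otimes Sym_{n-r}(V_n)),
\]
sending $v \in M^\alpha$ to $v \otimes e_{\widehat\alpha}$, is a vector-space isomorphism; the inverse simply strips off the $e_{\widehat\alpha}$ factor.

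Next I would check $\mathfrak{S}_n$-equivariance of $\Phi$. Fix $\sigma \in \mathfrak{S}_n$ and $v \in M^\alpha$ with $\alpha \in \Lambda(n,r)'$. By the discussion preceding Lemma~\ref{sumSn} (cf. eq.~(\ref{Av}) applied to a permutation matrix), $\sigma v \in M^{\sigma\alpha}$, and $\sigma\alpha \in \Lambda(n,r)'$ since permuting a $\{0,1\}$-vector keeps it a $\{0,1\}$-vector. So $\Phi(\sigma v) = \sigma v \otimes e_{\widehat{\sigma\alpha}}$. On the other hand $\sigma \Phi(v) = \sigma(v \otimes e_{\widehat\alpha}) = \sigma v \otimes \sigma e_{\widehat\alpha}$, and by Remark~\ref{hat} we have $\sigma e_{\widehat\alpha} = e_{\widehat{\sigma\alpha}}$ (using $\widehat{\sigma(\alpha)} = \sigma(\widehat\alpha)$, which is immediate from the definitions). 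Hence $\Phi(\sigma v) = \sigma\Phi(v)$, and since $\Phi$ is additive and the $M^\alpha$ span $\F(M)$, this gives $\mathfrak{S}_n$-equivariance on all of $\F(M)$.

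For the divided-power isomorphism, the argument is formally identical: $f(M \otimes D_{n-r}(V_n)) = (M \otimes D_{n-r}(V_n))^{(1^n)} = \bigoplus_{\alpha \in \Lambda(n,r)'} M^\alpha \otimes D_{n-r}(V_n)^{\widehat\alpha}$, and $D_{n-r}(V_n)^{\widehat\alpha}$ is one-dimensional spanned by $e_{(\widehat\alpha)} = e_1^{(\widehat\alpha_1)} \cdots e_n^{(\widehat\alpha_n)}$ (each $\widehat\alpha_i \in \{0,1\}$, so these divided powers are just $1$ or $e_i$). The map $v \mapsto v \otimes e_{(\widehat\alpha)}$ is a vector-space isomorphism $\F(M) \to f(M \otimes D_{n-r}(V_n))$, and it is $\mathfrak{S}_n$-equivariant by the second half of Remark~\ref{hat}, namely $\sigma e_{(\widehat\alpha)} = e_{(\widehat{\sigma\alpha})}$. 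Composing (or comparing) the two isomorphisms yields all three modules isomorphic as $\mathfrak{S}_n$-modules.

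I do not expect a serious obstacle here; the only point requiring a little care is the bookkeeping that the weight spaces of $Sym_{n-r}(V_n)$ and $D_{n-r}(V_n)$ appearing in the relevant summand of the tensor-product weight decomposition are exactly the one-dimensional spaces indexed by $\widehat\alpha$ for $\alpha \in \Lambda(n,r)'$ — this is where the hypothesis $r \le n$ (so that $\Lambda(n,r)'$ is nonempty and $\widehat\alpha$ makes sense, and $n - r \ge 0$) is used, and where one must note that no weight $\alpha \notin \Lambda(n,r)'$ of $M$ can be completed to $(1^n)$ because a coordinate $\alpha_j \ge 2$ cannot be cancelled by a nonnegative coordinate of a weight of a symmetric or divided power. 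Everything else is a direct translation via Remark~\ref{hat} and eq.~(\ref{Av}).
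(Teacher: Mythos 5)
Your proposal is correct and follows essentially the same path as the paper's own proof: both define the map $v_\alpha \mapsto v_\alpha \otimes e_{\widehat\alpha}$, both invoke the weight-space decomposition of the tensor product restricted to weight $(1^n)$ together with the one-dimensionality of $Sym_{n-r}(V_n)^{\widehat\alpha}$ to conclude bijectivity, and both use Remark~\ref{hat} for $\mathfrak{S}_n$-equivariance. Your handling of the divided-power case and your explicit remark on why only $\alpha \in \Lambda(n,r)'$ can contribute are just slightly more spelled-out versions of the same argument.
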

\begin{proof} In order to show the first isomorphism, recall that $\F(M)=\bigoplus_{\alpha \in \Lambda(n,r)'} M^\alpha$ according to Definition \ref{F}. Let $\alpha=(\alpha_1, \dots, \alpha_n) \in \Lambda(n,r)'$. With the notation of Remark \ref{hat},  let \[\widehat{\alpha}:=(1-\alpha_1, \dots, 1-\alpha_n) \in \Lambda(n,n-r)'\] and consider the element $e_{\widehat{\alpha}} \in Sym_{n-r}(V_n)$,
	where $e_1, e_2, \dots, e_n$ is the canonical basis of the natural module $V_n$ of $\M_n(\Bbbk)$. Now we define a linear map \[\Phi: \F(M) \to f(M \otimes Sym_{n-r}(V_{n})), \ \ \bigoplus_{\alpha \in \Lambda(n,r)'}v_\alpha \mapsto \bigoplus_{\alpha \in \Lambda(n,r)'}v_\alpha \otimes e_{\widehat{\alpha}},\]
	where $v_\alpha \in M^\alpha$. Since $v_\alpha \in M^{\alpha}$ and $e_{\widehat{\alpha}} \in (Sym_{n-r}(V_{n}))^{\widehat{\alpha}}$, we have  \[v_\alpha \otimes e_{\widehat{\alpha}} \in (M \otimes Sym_{n-r}(V_{n}))^{\alpha +\widehat{\alpha}}=(M \otimes Sym_{n-r}(V_{n}))^{(1,...,1)}=f(M\otimes Sym_{n-r}(V_{n})).\] In other words, the image of $\Phi$ is indeed contained in $f(M\otimes Sym_{n-r}(V_{n}))$.
	
	By \cite[(3.3c) Proposition]{Gr} we have \begin{equation}\label{weightdec}(M \otimes Sym_{n-r}(V_{n}))^{(1,...,1)}= \bigoplus_{\alpha \in \Lambda(n,r)'} M^\alpha \otimes (Sym_{n-r}(V_{n}))^{\widehat{\alpha}}.\end{equation} First, we observe that from the right hand side of eq. (\ref{weightdec}) it follows that the map $\Phi$ is onto. Second, we observe that since $\widehat{\alpha} \in \Lambda(n,n-r)'$ for all $\alpha \in \Lambda(n,r)'$, the vector space $(Sym_{n-r}(V_{n}))^{\widehat{\alpha}}$ is 1 dimensional. Thus we conclude from (\ref{weightdec}) and Definition \ref{F} that the vector spaces $\F(M)$ and $(M \otimes Sym_{n-r}(V_{n}))^{(1,...,1)}$ have equal dimensions. Hence $\Phi$ is onto and 1-1.
	
	It remains to be shown that $\Phi$ is a map of $\mathfrak{S}_n$-modules. Let $\alpha \in \Lambda(n,r)'$ and $v_\alpha \in M^\alpha$. On the one hand we have for all $\sigma \in \mathfrak{S}_n$,\begin{equation}\label{eq1}\sigma \Phi(v_{\alpha})=\sigma(v_\alpha \otimes e_{\widehat{\alpha}})=\sigma v_\alpha \otimes \sigma e_{\widehat{\alpha}}.\end{equation} 
By \eqref{Av}, applied to the permutation matrix corresponding to $\sigma$,
we have $\sigma v_\alpha \in M^{\sigma(\alpha)}$. Thus on the other hand we have \begin{equation}\label{eq2} \Phi(\sigma v_{\alpha})=\sigma v_\alpha \otimes e_{\widehat{\sigma(\alpha)}}.\end{equation} From Remark \ref{hat} it follows that the right hand sides of eqs. (\ref{eq1}) and (\ref{eq2}) are equal. Hence $\Phi$ is a map of $\mathfrak{S}_n$-modules.
	
Next, consider the linear map
\[
\Psi: \F(M) \to f(M \otimes D_{n-r}(V_{n})), \qquad
\bigoplus_{\alpha \in \Lambda(n,r)'}v_\alpha
\mapsto
\bigoplus_{\alpha \in \Lambda(n,r)'}v_\alpha \otimes e_{(\widehat{\alpha})},
\]
where $v_\alpha \in M^\alpha$. By the divided-power analogue of
\eqref{weightdec}, we have
\[
(M \otimes D_{n-r}(V_{n}))^{(1,\dots,1)}
=
\bigoplus_{\alpha \in \Lambda(n,r)'}
M^\alpha \otimes (D_{n-r}(V_{n}))^{\widehat{\alpha}}.
\]
Since each space $(D_{n-r}(V_{n}))^{\widehat{\alpha}}$ is one-dimensional,
the same argument as for $\Phi$ shows that $\Psi$ is a vector space
isomorphism. Finally, Remark \ref{hat} gives
\[
\sigma e_{(\widehat{\alpha})}
=
e_{(\widehat{\sigma(\alpha)})},
\]
so the same equivariance argument used for $\Phi$ shows that $\Psi$ is an
isomorphism of $\mathfrak{S}_n$-modules.
 \end{proof}

\begin{corollary}\label{cormain1}
	Let $\Mo(n)$ be a submonoid of $\PT_n$ containing $\mathfrak{S}_n$ and let $M \in P_{\Bbbk}(n,r)$. Then for the restriction $Res^{\Mo(n)}_{\mathfrak{S}_n}(\G_{\Mo(n)}(M))$ of the $\Mo(n)$-module $\G_{\Mo(n)}(M)$ to $\mathfrak{S}_n$ we have \[Res^{\Mo(n)}_{\mathfrak{S}_n}(\G_{\Mo(n)}(M)) \cong \F(M)\cong f(M \otimes Sym_{n-r}(V_{n})) \cong f(M \otimes D_{n-r}(V_{n})).\]
\end{corollary}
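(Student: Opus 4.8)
The plan is to assemble the corollary from two results already established in the excerpt. First I would invoke Lemma \ref{resFM}(1), which says precisely that for a submonoid $\Mo(n) \subseteq \PT_n$ containing $\mathfrak{S}_n$ and any $M \in P_{\Bbbk}(n,r)$, the restriction $Res^{\Mo(n)}_{\mathfrak{S}_n}(\G_{\Mo(n)}(M))$ is isomorphic, as $\mathfrak{S}_n$-module, to $\F(M)$. This gives the first isomorphism in the chain for free. Then I would apply Theorem \ref{main1}, which gives the remaining two isomorphisms $\F(M) \cong f(M \otimes Sym_{n-r}(V_n)) \cong f(M \otimes D_{n-r}(V_n))$ as $\mathfrak{S}_n$-modules. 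Concatenating these isomorphisms yields the displayed chain, and the corollary follows immediately.

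Since both ingredients are stated earlier, there is essentially no obstacle: the corollary is a direct composition of Lemma \ref{resFM}(1) and Theorem \ref{main1}. The only point worth a sentence of care is that the hypothesis "$\mathfrak{S}_n \subseteq \Mo(n)$" is needed precisely so that restriction to $\mathfrak{S}_n$ makes sense and so that Lemma \ref{resFM}(1) applies; the functor $\G_{\Mo(n)}$ itself is defined for any submonoid of $\PT_n$, but the identification of its restriction with $\F(M)$ uses the containment. One might also remark, for the reader, that when $r = n$ the rightmost terms reduce to $f(M)$ (cf. Example \ref{exSn}), since $Sym_0(V_n) = D_0(V_n) = \Bbbk$ is the trivial module and $M \otimes \Bbbk \cong M$; I would include this only as a closing remark rather than as part of the proof.

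In short, I would write: by Lemma \ref{resFM}(1) we have $Res^{\Mo(n)}_{\mathfrak{S}_n}(\G_{\Mo(n)}(M)) \cong \F(M)$ as $\mathfrak{S}_n$-modules, and by Theorem \ref{main1} we have $\F(M) \cong f(M \otimes Sym_{n-r}(V_n)) \cong f(M \otimes D_{n-r}(V_n))$ as $\mathfrak{S}_n$-modules; combining these gives the claim. I do not expect any real difficulty here.
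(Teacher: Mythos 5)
Your proposal is correct and matches the paper's proof exactly: the paper also derives the corollary by combining Lemma \ref{resFM}(1) with Theorem \ref{main1}. Nothing more is needed.
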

\begin{proof} This follows from Theorem \ref{main1} and Lemma \ref{resFM}.\end{proof}

\subsection{Induced modules and $\G_{\Mo(n)}(M)$}In the Clifford-Munn-Ponizovskii theory on the classification of simple modules of a finite monoid, the induction functor from maximal subgroups of the monoid is a central object, see \cite{GaMaSt} or \cite[Chapter 5.2]{Ste}. We show next that $\G_{\Mo(n)}(M)$ is an induced module from $\mathfrak{S}_r$ to $\Mo(n)$. This result will be used in Theorem \ref{irr} below. \begin{definition}\label{Inr}Let $I_{n,r}$ be the set of all injective maps $[r] \to [n]$.\end{definition} Following \cite[p. 62]{Ste} we have that $\Bbbk I_{n,r}$ is a right $\mathfrak{S}_r$-module: for $h \in I_{n,r}$ and $A  \in \mathfrak{S}_r$, define $hA$ to be the composition $[r] \xrightarrow{\sigma} [r] \xrightarrow{h} [n]$ where $\sigma$ is the permutation corresponding to the permutation matrix $A$. One verifies that $\Bbbk I_{n,r}$ is a free $\mathfrak{S}_r$-module with basis the order preserving injective maps $[r] \to [n]$. Hence the rank of $\Bbbk I_{n,r}$ is equal to $\binom{n}{r}$.

We also  have that $\Bbbk I_{n,r}$ is a left $\PT_n$-module: for $h \in I_{n,r}$ and $A \in \PT_n$, define
\begin{equation}
	A h = \begin{cases}
		p_A h, &\text{if} \ p_A h \in I_{n,r},\\
		0, &\text{else},
	\end{cases}
\end{equation}
where $p_A: [n] \to [n]$ denotes the partial map corresponding to $A \in \PT_n$ and $p_A h$ denotes the composition $[r] \xrightarrow{h} [n] \xrightarrow{p_A} [n]$ of the map $h$ and the partial map $p_A$. In fact, $\Bbbk I_{n,r}$ is a $(\PT_n,\mathfrak{S}_r)$-bimodule.
\begin{theorem}\label{ind}
Let $\Mo(n)$ be a submonoid of $\PT_n$ containing $\mathfrak{S}_n$ and let $M \in P_{\Bbbk}(n,r)$.  Then as $\Mo(n)$-modules \[\G_{\Mo (n)}(M) \cong \Bbbk I_{n,r}\otimes_{\mathfrak{S}_r} f(M),\]
where $f:P_{\Bbbk}(n,r) \to \m\mathfrak{S}_r $ is the Schur functor. Thus we have the following commutative diagram of functors, 
\begin{center}
	\begin{tikzcd}
		P_{\Bbbk}(n,r) \arrow[dd,"f" '] \arrow[rd,"\G_{\Mo (n)}"]\\
		& \m \Mo(n) \\
		\m \mathfrak{S}_r \arrow[ru, "\Ind" ']
	\end{tikzcd}
\end{center}
where $\Ind(X) :=\Bbbk I_{n,r}\otimes_{\mathfrak{S}_r} X$ for $X \in \m\mathfrak{S}_r$.
\end{theorem}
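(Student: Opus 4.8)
The goal is to produce a natural $\Mo(n)$-module isomorphism $\G_{\Mo(n)}(M) \cong \Bbbk I_{n,r} \otimes_{\mathfrak{S}_r} f(M)$. Recall that as a vector space $\G_{\Mo(n)}(M) = M/M''$ is identified with $M' = \bigoplus_{\alpha \in \Lambda(n,r)'} M^\alpha$, and that by \cite[(3.3a Proposition)]{Gr} each summand $M^\alpha$ with $\alpha \in \Lambda(n,r)'$ is isomorphic, via the action of an appropriate permutation, to the distinguished weight space $M^{\omega_0} = f(M)$ where $\omega_0 = (1^r, 0^{n-r})$. On the other side, since $\Bbbk I_{n,r}$ is free as a right $\mathfrak{S}_r$-module with basis the order-preserving injections $[r] \to [n]$, we have $\Bbbk I_{n,r} \otimes_{\mathfrak{S}_r} f(M) = \bigoplus_{h} h \otimes f(M)$, the sum over order-preserving injections $h$. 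Order-preserving injections $h : [r] \to [n]$ are in bijection with weights $\alpha \in \Lambda(n,r)'$ (send $h$ to the indicator of its image $\{h(1) < \dots < h(r)\}$), so there is an evident vector space isomorphism matching $h \otimes v$ with the copy of $v$ sitting in $M^{\alpha(h)}$. The substance of the proof is to check this is $\Mo(n)$-equivariant.

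**Carrying it out.** First I would fix, for each $\alpha \in \Lambda(n,r)'$, the order-preserving injection $h_\alpha : [r] \to [n]$ whose image is the support of $\alpha$, and a permutation $\pi_\alpha \in \mathfrak{S}_n$ sending $\omega_0$ to $\alpha$ (e.g. the one acting as $h_\alpha$ on $[r]$ and order-preservingly on the complement), noting $\pi_\alpha$ restricts on weight spaces to an isomorphism $M^{\omega_0} \xrightarrow{\sim} M^\alpha$ by \eqref{wpres} and \cite[(3.3a)]{Gr}. Define $\Psi : \Bbbk I_{n,r} \otimes_{\mathfrak{S}_r} f(M) \to \G_{\Mo(n)}(M)$ on the spanning set by $\Psi(h_\alpha \otimes v) = $ the image of $\pi_\alpha v \in M^\alpha \subseteq M'$ in $M/M''$, extended to all of $\Bbbk I_{n,r}$ by $\mathfrak{S}_r$-balance: for a general injection $h = h_\alpha \sigma$ with $\sigma \in \mathfrak{S}_r$, one must have $\Psi(h \otimes v) = \Psi(h_\alpha \otimes \sigma v)$, and I would check this is well-defined by verifying that the two natural recipes agree, i.e. that $f(M)$'s $\mathfrak{S}_r$-action (which is the restriction to $\mathfrak{S}_r \subseteq \mathfrak{S}_n$ of the $\mathfrak{S}_n$-action, permuting coordinates within $[r]$) is compatible with the chosen $\pi_\alpha$. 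Bijectivity is then immediate from the two direct-sum decompositions and the fact that $|\Lambda(n,r)'| = \binom{n}{r}$ equals the rank of $\Bbbk I_{n,r}$ (Lemma \ref{resFM}(2)).

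**The equivariance check — the main obstacle.** The real work is showing $\Psi(A(h_\alpha \otimes v)) = A\,\Psi(h_\alpha \otimes v)$ for $A \in \Mo(n)$. By definition of the left $\PT_n$-action on $\Bbbk I_{n,r}$, $A h_\alpha$ is the composite $p_A \circ h_\alpha$ if that is still injective and $0$ otherwise. The case $p_A \circ h_\alpha \notin I_{n,r}$ corresponds exactly to the situation where $A$ collapses two elements of $\mathrm{supp}(\alpha)$ or sends one outside $[n]$'s admissible range, and I expect to show that precisely in this case $\pi_\alpha v$ maps into a weight space $M^\gamma$ with $\gamma \in \Lambda(n,r)''$ — so $A \pi_\alpha v \in M''$ and its image in $\G_{\Mo(n)}(M)$ vanishes, matching $\Psi(0) = 0$; here Lemma \ref{sumSn}(2) and its proof (the computation \eqref{Av} tracking how $\diag$-conjugation by $A$ reshuffles the exponents via $\phi_A$) are exactly what is needed. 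When $p_A \circ h_\alpha = h_\beta \tau$ is injective, writing it in the form order-preserving-times-permutation, one shows $A \pi_\alpha v$ lies in $M^\beta$ and equals $\pi_\beta(\tau v)$ modulo $M''$, again by chasing \eqref{Av}: the point is that the partial map $\phi_A$ restricted to $\mathrm{supp}(\alpha)$ is the injection $p_A \circ h_\alpha$ up to the identification, so the monomial exponents transform exactly as the bimodule action prescribes. The commutativity of the triangle of functors then follows formally, since $\Ind(f(M)) = \Bbbk I_{n,r} \otimes_{\mathfrak{S}_r} f(M)$ by definition and naturality of $\Psi$ in $M$ is clear from the construction (everything was defined weight-space-wise and morphisms in $P_{\Bbbk}(n,r)$ preserve weight spaces). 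The one delicate bookkeeping point I would be most careful about is the left/right and $\sigma$ vs. $\sigma^{-1}$ conventions in \eqref{sigmaa}, \eqref{sigmaea} and the definition of $hA$, to make sure the $\mathfrak{S}_r$-balancing and the $\Mo(n)$-action genuinely line up rather than differing by an inverse.
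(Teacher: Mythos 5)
Your proposal is correct and follows essentially the same route as the paper: the paper's map $\Phi(h\otimes m)=A_hm+M''$, where $A_h\in\PT_n$ is the partial permutation with domain $[r]$ restricting to $h$, agrees on $f(M)=M^{(1^r,0^{n-r})}$ with your $\pi_\alpha$ (since the action of a matrix on a weight-$(1^r,0^{n-r})$ vector in a degree-$r$ polynomial representation depends only on its first $r$ columns), and your bijectivity-from-matching-direct-summands argument is the same dimension count the paper invokes via Lemma \ref{resFM}(2). You develop the $\mathfrak{S}_r$-balance and $\Mo(n)$-equivariance checks in more detail than the paper's ``straightforward to verify,'' but the underlying map and logic are identical.
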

\begin{proof} Using that $\Bbbk I_{n,r}$ is a free $\mathfrak{S}_r$-module of rank equal to $\tbinom{n}{r}$ and Lemma \ref{resFM}(2) we have \[\dim (\Bbbk I_{n,r}\otimes_{\mathfrak{S}_r} f(M)) = \tbinom{n}{r} \dim(f(M))=\dim(\G_{\Mo (n)}(M)).\]Thus in order to prove the first claim of the theorem it suffices to show that there is a surjective map of $\Mo(n)$-modules $\Bbbk I_{n,r}\otimes_{\mathfrak{S}_r} f(M) \to \G_{\Mo (n)}(M).$ For $h \in I_{n,r}$ we have the corresponding partial map $[n] \to [n]$ with domain $[r]$ whose restriction to $[r]$ is equal to $h$. Let $A_h \in \PT_n$ be the matrix corresponding to this partial map. Now consider the map $\Phi:\Bbbk I_{n,r}\otimes f(M) \to \G_{\Mo (n)}(M)$ defined by $\Phi(h\otimes m)=A_hm + M''$ for $h \in I_{n,r}$ and $m \in f(M)$. It is straightforward to verify that $\Phi$ is a surjective map of $\Mo(n)$-modules and that it induces a map $\Bbbk I_{n,r}\otimes_{\mathfrak{S}_r} f(M) \to \G_{\Mo (n)}(M)$.

The second claim of the theorem follows from the first.
\end{proof}
 
\section{Basis theorem and branching rules} For a submonoid $\Mo(n)$ of the partial transformation monoid $\PT_n$ we define $\Mo(n)$-modules $\R(n)^{\la / \mu }$ and $\R(n)_{\la / \mu }$. We establish for these a basis theorem and two branching rules.  In Section 4.6 we give an application of the first branching rule when $\Bbbk$ is a field of characteristic zero.

From the present section, we only need Definition \ref{Specht} and Theorem \ref{bm} for Section \ref{mar}.
\subsection{Definition of $\R(n)^{\la / \mu}$ and $\R(n)_{\la / \mu}$}
\begin{definition} \label{Specht}Suppose $\lambda / \mu$ is a skew partition of $r$ and $\Mo(n)$ is a submonoid of $\PT_n$. We define the $\Mo(n)$-modules\begin{align*}
		\R(n)^{\lambda / \mu} :=\G_{\Mo(n)}(L_{\lambda' / \mu'}(V_{n})),\ \ 
 		\R(n)_{\lambda / \mu} :=\G_{\Mo(n)}(K_{\lambda / \mu}(V_{n})).
	\end{align*}
If $\mu=(0)$, we write $\R(n)^{\lambda}$ and $\R(n)_{\lambda}$ for  $\R(n)^{\lambda / \mu}$ and $\R(n)_{\lambda / \mu}$ respectively. Often we will write $\R^{\lambda / \mu}$ and $\R_{\lambda / \mu}$ in place of $\R(n)^{\lambda / \mu}$ and $\R(n)_{\lambda / \mu}$ respectively, if there is no danger of uncertainty regarding $n$.\end{definition}

\begin{example} Suppose $r=n$, the monoid $\Mo(n)$ is the symmetric group $\mathfrak{S}_n$, and $\lambda$ is a partition of $n$. Then from Theorem \ref{main1} we have that, as $\mathfrak{S}_n$-modules,  \[\R(n)^{\lambda}  \cong \F(L_{\lambda'} (V_{n})) \cong f(L_{\lambda'} (V_{n})).\] We know from Section 2.4 that $f(L_{\lambda'} (V_{n}))$ is the Specht module $S^{\lambda}$ and thus $\R(n)^{\lambda} \cong S^{\lambda}$. Likewise, $\R(n)_\lambda$ is isomorphic to the  dual Specht module $S_\lambda$.
\end{example}

\begin{remark}\label{rem52} Suppose the characteristic of $\Bbbk$ is zero. It is well known that the $\M_n(\Bbbk)$-modules $L_{\lambda' / \mu'}(V_n)$ and $K_{\lambda / \mu}(V_n)$ are isomorphic. (This follows, for example, from the fact they have the same formal character.) From this it follows that the $\Mo(n)$-modules $\R(n)^{\lambda / \mu}$ and $\R(n)_{\lambda / \mu}$ are isomorphic for any submonoid $\Mo(n)$ of $\PT_n$.
\end{remark}

\subsection{Basis theorem}
For a partition $\lambda$ of $r$, Grood \cite[Definition 2.1]{Gro} considered semistandard tableaux of shape $\lambda$ that have distinct entries. Similarly we give the following definition. 
\begin{definition}\label{mtab}Let $\lambda/\mu$ be a skew partition of $r$. A tableau $T \in \tab_{\lambda / \mu}$ with entries in $[n]$ is called a $(\lambda / \mu, n,r)$ tableau if the entries of $T$ are distinct. A $(\lambda / \mu, n,r)$ tableau that is semistandard is called a standard $(\lambda / \mu, n,r)$ tableau.
\end{definition}

\begin{remark}\label{numtab} It is clear that the number of standard $(\lambda / \mu, n,r)$ tableaux is equal to $\tbinom{n}{r}f^{\la / \mu}$, since $f^{\la / \mu}$ equals the number of standard tableaux of shape $\la / \mu$ with entries from $[r]$.
\end{remark}
For the next theorem we adopt the notation of Section \ref{Sm}.
\begin{theorem}[Basis theorem]\label{bm}
		Let \(\lambda/\mu\) be a skew partition of \(r\). Then the elements
		\(d_{\lambda'/\mu'}(V_n)(X_T)\), where \(T\) runs over the standard
		\((\lambda'/\mu',n,r)\) tableaux, descend to a basis of
		\(\R(n)^{\lambda/\mu}\). Similarly, the elements
		\(d'_{\lambda/\mu}(V_n)(Y_T)\), where \(T\) runs over the standard
		\((\lambda/\mu,n,r)\) tableaux, descend to a basis of
		\(\R(n)_{\lambda/\mu}\).
		Moreover,
		\[
		\dim \R(n)^{\lambda/\mu}=\binom nr\dim S^{\lambda/\mu},
		\qquad
		\dim \R(n)_{\lambda/\mu}=\binom nr\dim S_{\lambda/\mu}.
		\]\end{theorem} 
\begin{proof}
	The basis elements of Theorem \ref{Bthm}(1) are weight elements of $L_{\lambda' / \mu'} (V_n)$. From this it follows that the elements $d_{\lambda' / \mu'}(V_n)(X_T)$, where $T$ runs over the semistandard $(\lambda' / \mu', n,r)$ tableaux, form a basis of the vector space $\bigoplus_{\alpha \in \Lambda(n,r)'} (L_{\lambda' / \mu'} (V_n))^\alpha=(L_{\lambda' / \mu'} (V_n))'$.
	By Definition \ref{Specht} and eq. (\ref{sum}) we have $\R(n)^{\lambda / \mu} \cong (L_{\lambda' / \mu'} (V_n))'$ as vector spaces. Hence the elements $d_{\lambda' / \mu'}(V_n)(X_T)$, where $T$ runs over the semistandard $(\lambda' / \mu', n,r)$ tableaux, descend to a basis of the quotient vector space $\R(n)^{\lambda / \mu}= L_{\lambda' / \mu'} (V_n) /  (L_{\lambda' / \mu'} (V_n))''$. Now from this and Remark \ref{numtab} the equality $\dim\R(n)^{\lambda / \mu } =\tbinom{n}{r}\dim S^{\lambda / \mu}$ follows.
	
 The proof for $\R(n)_{\lambda / \mu}$ is similar.
\end{proof}
\subsection{Irreducibility in characteristic zero} \begin{theorem}\label{irr}Suppose $\Bbbk$ is a field of characteristic zero and $\Mo(n)$ is a submonoid of $\PT_n$ containing $\mathfrak{S}_n$.
	\begin{enumerate}[leftmargin=*]    \item[\textup{(1)}] If $\IS_n \subseteq \Mo(n)$, then the $\Mo(n)$-module $\R(n)^\la$ is irreducible for every partition $\la$ of $r$, where $r\le n$.
	 \item[\textup{(2)}] If $\T_n \subseteq \Mo(n)$, then the $\Mo(n)$-module $\R(n)^\la$ is irreducible for every partition $\la$ of $r$, where $r \le n$  and $\la \neq (1^{r})$.\end{enumerate}
\end{theorem}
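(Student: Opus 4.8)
The plan is to combine the branching/induction picture of Theorem~\ref{ind} with the Clifford--Munn--Ponizovskii theory of irreducible modules of finite monoids, in the semisimple setting of characteristic zero. By Theorem~\ref{ind}, $\R(n)^\la = \G_{\Mo(n)}(L_{\la'/\mu'}(V_n))$ with $\mu=(0)$ is the induced module $\Bbbk I_{n,r}\otimes_{\mathfrak{S}_r} f(L_{\la'}(V_n))$, and by Section~\ref{Sch} we have $f(L_{\la'}(V_n))\cong S^\la$, the ordinary Specht module, which over a field of characteristic zero is an irreducible $\mathfrak{S}_r$-module. So the question becomes: for which irreducible $\mathfrak{S}_r$-modules $E$ is the induced $\Mo(n)$-module $\Bbbk I_{n,r}\otimes_{\mathfrak{S}_r} E$ irreducible? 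The key structural fact from monoid representation theory is that $\mathfrak{S}_r$ (realized as the group of permutation matrices supported on an $r$-subset of $[n]$, together with the identity on the complement) is the maximal subgroup at an idempotent $e$ of rank $r$ in each of $\IS_n$, $\PT_n$, $\T_n$, and $\Bbbk I_{n,r}$ is precisely the "left Schützenberger/Green" bimodule $\Bbbk(\M(n)e)$ governing induction from that group. The CMP theorem then says: the induced module $\mathrm{Ind}(E)$ has a unique maximal submodule, and its top $\widetilde{E}$ runs over a complete irredundant list of irreducibles as $E$ runs over irreducibles of the maximal subgroups; moreover $\mathrm{Ind}(E)$ itself is irreducible if and only if $\mathrm{Ind}(E)$ has no proper nonzero submodule, which happens exactly when the "lower rank" part of the module vanishes on $E$ — equivalently, when the apex of $\widetilde{E}$ is the $\mathcal{J}$-class of $e$ and the natural map $\mathrm{Ind}(E)\to\widetilde{E}$ is injective.

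Concretely, I would argue as follows. First, identify the radical of $\mathrm{Ind}(E)=\Bbbk I_{n,r}\otimes_{\mathfrak{S}_r}E$ with the submodule spanned by elements killed by the rank-$r$ idempotents, i.e. the kernel of the canonical surjection onto the "Schützenberger" quotient; call this $\mathrm{rad}$. One checks (this is the standard computation, e.g. in \cite[Chapter~5]{Ste} or \cite{GaMaSt}) that $\mathrm{rad}=0$ if and only if, for the maximal subgroup $G=\mathfrak{S}_r$ and the action of the "link" structure, the only $G$-equivariant degeneration of $E$ to lower-rank modules is trivial. For $\IS_n$ the monoid is "rank-rigid": multiplying a rank-$r$ partial permutation by any element of $\IS_n$ either keeps rank $r$ or lowers it, and the lower-rank contributions are genuinely zero on the level of $\Bbbk I_{n,r}$ because elements of $\IS_n$ act on $I_{n,r}$ by partial bijections — so either $A_h$ stays injective or the action sends $h$ to $0$. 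Hence for $\IS_n$ (and any $\Mo(n)$ with $\IS_n\subseteq\Mo(n)\subseteq\PT_n$, since for these $\IS_n$ already contains all rank-$r$ idempotents and a transversal of the relevant $\mathcal{L}$- and $\mathcal{R}$-classes) the module $\R(n)^\la=\mathrm{Ind}(S^\la)$ is already irreducible for all $\la\vdash r$, $r\le n$ — this is part (1), and it recovers Grood's and Solomon's result \cite{Gro,Sol} uniformly. For $\T_n$ (and $\Mo(n)$ with $\T_n\subseteq\Mo(n)$) the situation is the same \emph{except} that now there is exactly one extra constraint: the "partial-transformation → transformation" collapse. The relevant computation is exactly the one in Remark following Definition~\ref{F}: the rank-$1$ idempotent (constant map) acts on $\Bbbk I_{n,1}$ and on $f(L_{(1^r)'}(V_n))=f(Sym_r(V_n))$ nontrivially, producing $e_1^r$ from $e_1e_2\cdots e_r$, which is a nonzero element of a proper submodule of $\mathrm{Ind}(S^{(1^r)})$. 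This is precisely the sign representation $S^{(1^r)}$, and it is the unique partition for which the induced $\T_n$-module fails to be simple — matching Steinberg's description \cite[Section~5.3]{Ste}. For all other $\la$, the degeneration obstruction vanishes and $\R(n)^\la$ is irreducible.

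To organize the write-up: (a) reduce via Theorem~\ref{ind} and the identification $f(L_{\la'}(V_n))\cong S^\la$ to a statement about $\mathrm{Ind}$ of a simple $\mathfrak{S}_r$-module; (b) recall from \cite{Ste,GaMaSt} the criterion for $\mathrm{Ind}(E)$ to be simple, namely that $\mathrm{rad}\,\mathrm{Ind}(E)=0$, and that $\mathrm{rad}\,\mathrm{Ind}(E)$ is computed from the action of the rank-$<r$ elements on $\Bbbk I_{n,r}\otimes_{\mathfrak{S}_r}E$; (c) for $\IS_n$, observe the action of any element of $\IS_n$ on a basis vector $h\otimes v$ is either $(A_h\text{-type with rank } r)\otimes v$ or $0$, so the would-be radical is $0$, giving (1); (d) for $\T_n$, reduce the radical computation to checking when the constant-map idempotents act by $0$ on $\mathrm{Ind}(E)$, show this holds iff $E\not\cong S^{(1^r)}$ by the explicit $e_1e_2\cdots e_r\mapsto e_1^r$ computation in the module $\mathrm{Ind}(S^{(1^r)})=\R(n)^{(1^r)}$ realized inside $\G_{\T_n}(Sym_r(V_n))$, giving (2).

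The main obstacle I anticipate is step (b)--(d): making the radical computation for $\T_n$ clean and self-contained. The inequality $\dim\R(n)^\la=\binom{n}{r}f^\la$ from Proposition~\ref{bm} pins down the dimension, and the CMP machinery pins down which simple sits on top, but verifying that \emph{no} proper submodule exists for $\la\ne(1^r)$ requires either citing the general CMP classification precisely (counting: the number of simple $\T_n$-modules equals $\sum_r p(r)$, and $\R(n)^\la$ for $\la\ne(1^r)$ together with one more module at $r$ for $\la=(1^r)$ accounts for all of them) or doing a direct argument that the only rank-lowering relation in $Sym_{\bullet}(V_n)$ compatible with the Specht filtration is the one producing $S^{(1^r)}\to S^{(r-1,1)}$-type degeneration. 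I would use the counting/CMP route, invoking \cite[Chapter~5]{Ste} and \cite{GaMaSt}, and supplement it with the explicit Remark-following-Definition~\ref{F} computation to exhibit the genuine submodule in the $\la=(1^r)$ case, confirming the exclusion is sharp.
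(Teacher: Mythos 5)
Your proposal and the paper share the key structural step for part (2): both pass through Theorem~\ref{ind} to identify $\R(n)^\la$ with $\Bbbk I_{n,r}\otimes_{\mathfrak{S}_r}S^\la$. But from that point the paper simply cites existing results and then uses an elementary observation, whereas you attempt a much more ambitious CMP-theoretic rederivation that you yourself acknowledge is not carried out. The paper's proof of (1) does not go through CMP at all: it cites Grood's \cite[Theorem 4.1]{Gro} directly (adapted to this setting) to get irreducibility over $\IS_n$, and then observes that if the restriction of an $\Mo(n)$-module to a submonoid is irreducible, the module is irreducible. The same one-line observation handles the passage from $\T_n$ to general $\Mo(n)\supseteq\T_n$ in (2). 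You never state this elementary lemma; in (1) you replace it with a vaguer ``$\IS_n$ already contains all rank-$r$ idempotents and a transversal of the relevant $\mathcal{L}$- and $\mathcal{R}$-classes,'' and in (2) you do not address the passage from $\T_n$ to $\Mo(n)$ at all.

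There is also a concrete error in your treatment of the exceptional case $\la=(1^r)$. You write $f(L_{(1^r)'}(V_n))=f(Sym_r(V_n))$, but in the paper's conventions $(1^r)'=(r)$ and $L_{(r)}(V_n)=\Lambda^r(V_n)$, not $Sym_r(V_n)$. Thus $\R(n)^{(1^r)}=\G_{\T_n}(\Lambda^r(V_n))$, and the computation you invoke from the remark following the definition of $\G$ --- the all-ones-first-row matrix $A$ sending $e_1 e_2\cdots e_r\mapsto e_1^r$ --- takes place in $Sym_r(V_n)$, i.e.\ it pertains to $\R(n)^{(r)}$, not $\R(n)^{(1^r)}$. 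In $\Lambda^r(V_n)$ the same operator sends $e_1\wedge\cdots\wedge e_r$ to $e_1\wedge e_1\wedge\cdots=0$, so it is not a witness for reducibility there. (That remark in the paper is about why $M'$ is not a $\T_n$-submodule of $M$, which motivates using $M/M''$; it is not a statement about reducibility of any $\R(n)^\la$.) More fundamentally, for $\la\ne(1^r)$ your plan defers the actual verification that the induced module is irreducible to a counting/CMP argument you do not carry out; this is exactly the content of \cite[Corollary 5.11]{Ste}, which the paper cites, so as written your proposal is a sketch of how one might reprove that citation rather than a proof.
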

\begin{proof} (1) Suppose $\Mo(n)=\IS_n$. It is straightforward to verify that Grood's proof of \cite[Theorem 4.1]{Gro} carries over to our setting and yields that $\R(n)^\la$ is an irreducible $\IS_n$-module. Suppose $\IS_n \subseteq \Mo(n)$. By the previous case, the restriction of the $\Mo(n)$-module $\R(n)^\la$ to $\IS_n$ is irreducible. Hence $\R(n)^\la$ is an irreducible $\Mo(n)$-module.
	
	(2) By Theorem \ref{ind}, \[\R(n)^\la \cong \Bbbk I_{n,r}\otimes_{\mathfrak{S}_r} f(L_{\lambda'}(V_n))=\Bbbk I_{n,r}\otimes_{\mathfrak{S}_r} S^{\lambda}.\]
	By \cite[Corollary 5.11]{Ste}, $\Bbbk I_{n,r}\otimes_{\mathfrak{S}_r} S^{\lambda}$ is an irreducible $\T_n$-module since $\la \neq (1^{r})$. Thus $\R(n)^\la$ is an irreducible $\T_n$-module. Hence $\R(n)^\la$ is an irreducible $\Mo(n)$-module. 
\end{proof}

\subsection{Filtrations} Let us recall the following definitions. For $N \in P_{\Bbbk}(n,r)$, a  Schur filtration (respectively, Weyl filtration) of $N$ is a sequence of submodules in $P_{\Bbbk}(n,r)$
\begin{equation}\label{Sfil}0=N_s \subseteq N_{s-1} \subseteq \cdots \subseteq N_0=N \end{equation}
such that for $0 \le i \le s-1$ the quotient $N_i / N_{i+1}$ is zero or isomorphic to a Schur module $L_\lambda (V_n)$ (respectively, Weyl module $K_\lambda (V_n)$) for some partition $\la$ of $r$ \cite[Appendix A1]{Do2}. Likewise, for  $Q \in \m \mathfrak{S}_r$, a Specht filtration (respectively, dual Specht filtration) is a  sequence of $\mathfrak{S}_r$-submodules
\[0=Q_t \subseteq Q_{t-1} \subseteq \cdots \subseteq Q_0=Q\]
such that for $0 \le i \le t-1$ the quotient $Q_i / Q_{i+1}$ is zero or isomorphic to a Specht module $S^\lambda $ (respectively, dual Specht module $S_\lambda$) for some partition $\la$ of $r$. 

In the sequel, we will apply the following lemma.

\begin{lemma}\label{basicl}The sequence \begin{equation}\label{ffil}0=f(N_s) \subseteq f(N_{s-1}) \subseteq \cdots \subseteq f(N_0)=f(N) \end{equation} obtained by applying the Schur functor $f:P_{\Bbbk}(n,r) \to \m \mathfrak{S}_r$ to a Schur filtration (respectively, Weyl filtration) of $N$ as in (\ref{Sfil}), is a Specht (respectively, dual Specht) filtration of $f(N)$. If $n \ge r$, then the multiplicity of the Schur module $L_\lambda(V_n)$, where $\lambda \vdash r$, as a factor of the filtration (\ref{Sfil}) is equal to the multiplicity of the Specht module $S^{\lambda'}$ as a factor of the filtration (\ref{ffil}).\end{lemma}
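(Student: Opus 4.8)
The plan is to read everything off from three properties of the Schur functor $f\colon P_{\Bbbk}(n,r)\to\m\mathfrak{S}_r$ (defined under the standing hypothesis $n\ge r$ of Section~\ref{Sch}): it is $\mathfrak{S}_r$-equivariant by construction, it is exact, and by the defining equations~(\ref{fdef}) it sends Schur modules to Specht modules and Weyl modules to dual Specht modules, i.e.\ $f(L_\nu(V_n))=S^{\nu'}$ and $f(K_\nu(V_n))=S_\nu$ for every $\nu\vdash r$. Exactness of $f$ is the computation already used in the proof of Proposition~\ref{exactFM}: by \cite[(3.3b)]{Gr} a short exact sequence in $P_{\Bbbk}(n,r)$ stays exact after taking the $\alpha$-weight space for any $\alpha\in\Lambda(n,r)$, and for $\alpha=(1^r,0^{n-r})$ that weight space is $f(-)$. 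In particular $f$ carries monomorphisms to monomorphisms, so applying $f$ to the inclusions in~(\ref{Sfil}) yields a chain of $\mathfrak{S}_r$-submodules $0=f(N_s)\subseteq\cdots\subseteq f(N_0)=f(N)$, which is exactly~(\ref{ffil}).

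To identify the factors, apply the exact functor $f$ to $0\to N_{i+1}\to N_i\to N_i/N_{i+1}\to 0$, obtaining $f(N_i)/f(N_{i+1})\cong f(N_i/N_{i+1})$. If $N_i/N_{i+1}=0$ this is $0$; if $N_i/N_{i+1}\cong L_\nu(V_n)$ for some $\nu\vdash r$, it equals $f(L_\nu(V_n))=S^{\nu'}$. Hence every successive quotient of~(\ref{ffil}) is zero or a Specht module, so~(\ref{ffil}) is a Specht filtration of $f(N)$. The Weyl/dual-Specht case is the same argument with $f(K_\nu(V_n))=S_\nu$ in place of $f(L_\nu(V_n))=S^{\nu'}$.

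For the multiplicity statement, each nonzero factor $N_i/N_{i+1}$ of~(\ref{Sfil}) is isomorphic to $L_\lambda(V_n)$ for a \emph{unique} $\lambda\vdash r$ (Schur modules of distinct partitions of $r$ have distinct formal characters, hence are non-isomorphic), and by the previous paragraph that same index $i$ contributes the factor $S^{\lambda'}$ to~(\ref{ffil}), while a vanishing factor contributes a vanishing factor. This is an index-by-index bijection between the $L_\lambda(V_n)$-factors of~(\ref{Sfil}) and the $S^{\lambda'}$-factors of~(\ref{ffil}); since $\lambda\mapsto\lambda'$ is an involution on partitions of $r$, the two multiplicities agree. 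I do not expect a genuine obstacle here — the lemma is homological bookkeeping resting on exactness of $f$ — but the one point that wants careful phrasing is the multiplicity claim in positive characteristic: distinct Specht modules can be abstractly isomorphic (for instance $S^{(2)}\cong S^{(1^2)}$ when $\mathrm{char}\,\Bbbk=2$), so ``multiplicity of $S^{\lambda'}$ as a factor of~(\ref{ffil})'' must be read as the number of factors arising by applying $f$ to an $L_\lambda(V_n)$-factor of~(\ref{Sfil}) — the labeled count, not a count of abstract isomorphism types. With that (natural) reading the index correspondence yields the equality at once, and I would insert a sentence making this explicit.
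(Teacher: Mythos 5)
Your proof is correct and follows the same approach the paper uses: the paper's own proof is the one-line remark that the lemma follows from exactness of $f$ and equations~(\ref{fdef}), and you have simply filled in the homological bookkeeping (exactness on short exact sequences $\Rightarrow$ factors map to factors, non-isomorphism of the $L_\lambda(V_n)$ for distinct $\lambda\vdash r$ when $n\ge r$) that the authors leave implicit. Your side remark about reading ``multiplicity of $S^{\lambda'}$'' as the labeled count rather than a count of abstract isomorphism types is a reasonable clarification in positive characteristic, consistent with how the lemma is applied in Theorem~\ref{br1}.
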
 This follows since the functor $f$ is exact (see \cite[(3.3b) Proposition]{Gr}) and satisfies eqs. (\ref{fdef}). 
\subsection{First branching rule}We consider the next theorem as the main result of this section. 

A skew partition such that every column in the corresponding diagram has at most one cell is called a \textit{horizontal strip}.
\begin{theorem}[Branching rule for $\mathfrak{S}_n \subseteq \Mo(n)$]\label{br1} Let $\Mo(n)$ be a submonoid of $\PT_n$ containing $\mathfrak{S}_n$. Let $\lambda$ be a partition of $r$, where $r \le n$, and let $\mathrm{HS}(\lambda, n)$ be the set of all partitions $\lambda^{+}$ of $n$ containing $\lambda$ such that the skew shape $\lambda^{+} / \lambda $ is a horizontal strip. Then the following hold.
	\begin{enumerate}[leftmargin=*]
		\item[\textup{(1)}] The $\mathfrak{S}_n$-module $\R(n)^\lambda$ has a Specht filtration with factors $S^{\lambda^+}$ each appearing exactly once, where ${\lambda^+}$ ranges over $\mathrm{HS}(\lambda, n)$.
		\item[\textup{(2)}] The $\mathfrak{S}_n$-module $\R(n)_\lambda$ has a dual Specht filtration with factors $S_{\lambda^+}$ each appearing exactly once, where  ${\lambda^+}$ ranges over $\mathrm{HS}(\lambda, n)$.
\end{enumerate} \end{theorem}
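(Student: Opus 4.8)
The statement to prove is the branching rule for $\mathfrak{S}_n \subseteq \Mo(n)$: for $\lambda \vdash r$ with $r \le n$, the $\mathfrak{S}_n$-module $\R(n)^\lambda$ has a Specht filtration with factors $S^{\lambda^+}$ as $\lambda^+$ ranges over $\mathrm{HS}(\lambda,n)$, and dually for $\R(n)_\lambda$. The strategy is to reduce everything to polynomial representations of $\GL_n(\Bbbk)$ via Corollary \ref{cormain1} together with Lemma \ref{basicl}. The key point is that by Corollary \ref{cormain1}, the restriction of $\R(n)^\lambda = \G_{\Mo(n)}(L_{\lambda'}(V_n))$ to $\mathfrak{S}_n$ is isomorphic to $\F(L_{\lambda'}(V_n)) \cong f\big(L_{\lambda'}(V_n) \otimes Sym_{n-r}(V_n)\big)$, where $f: P_\Bbbk(n,n) \to \m\mathfrak{S}_n$ is the Schur functor. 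Note that $Sym_{n-r}(V_n) = L_{(1^{n-r})}(V_n)$ and the module $N := L_{\lambda'}(V_n) \otimes L_{(1^{n-r})}(V_n)$ lies in $P_\Bbbk(n,n)$.

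So the first step is to identify a Schur filtration of $N = L_{\lambda'}(V_n) \otimes L_{(1^{n-r})}(V_n)$. This is precisely a Pieri-type/Littlewood--Richardson statement: tensoring a Schur module $L_{\lambda'}(V_n)$ by $L_{(1^{n-r})}(V_n) = Sym_{n-r}(V_n)$ has a Schur filtration whose factors are the $L_{\nu}(V_n)$ with $\nu/\lambda'$ a vertical strip of size $n-r$ (with the correct multiplicity one), and this holds characteristic-freely. One should cite the appropriate reference — e.g. \cite{ABW} or \cite{Bof}, or Donkin's result on tensor products of modules with Schur filtrations \cite{Do2}, or Wang's Cauchy filtration \cite{W} — for the existence of this filtration with factors indexed by vertical strips. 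Under the transpose $\nu \leftrightarrow \nu' = \lambda^+$, the condition "$\nu/\lambda'$ is a vertical strip of size $n-r$" becomes "$\lambda^+/\lambda$ is a horizontal strip of size $n-r$", i.e. $\lambda^+ \in \mathrm{HS}(\lambda,n)$, and each such $\lambda^+$ occurs exactly once.

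The second step is to apply the Schur functor $f$ to this Schur filtration. By Lemma \ref{basicl}, applying $f$ to a Schur filtration of $N$ with factors $L_{\nu}(V_n)$ (for $\nu \vdash n$) yields a Specht filtration of $f(N)$ with factors $S^{\nu'} = S^{\lambda^+}$, and the multiplicities are preserved: the multiplicity of $L_\nu(V_n)$ as a factor equals the multiplicity of $S^{\nu'}$. Since $f(N) \cong \F(L_{\lambda'}(V_n)) \cong \mathrm{Res}^{\Mo(n)}_{\mathfrak{S}_n} \R(n)^\lambda$ by Corollary \ref{cormain1}, this gives part (1). For part (2), one runs the same argument with $K_{\lambda/\mu}$ in place of Schur modules: use $\R(n)_\lambda = \G_{\Mo(n)}(K_\lambda(V_n))$, restrict to get $f(K_\lambda(V_n) \otimes D_{n-r}(V_n))$ via Corollary \ref{cormain1} (the $D_{n-r}$ form), note $D_{n-r}(V_n) = K_{(n-r)}(V_n)$, invoke the dual Pieri rule giving a Weyl filtration of $K_\lambda(V_n) \otimes D_{n-r}(V_n)$ with factors $K_{\lambda^+}(V_n)$, $\lambda^+ \in \mathrm{HS}(\lambda,n)$, and apply the Weyl-filtration half of Lemma \ref{basicl} to obtain the dual Specht filtration.

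**Main obstacle.** The one genuinely nontrivial ingredient is the characteristic-free Pieri filtration: the existence of a Schur (respectively Weyl) filtration of $L_{\lambda'}(V_n) \otimes Sym_{n-r}(V_n)$ (resp. $K_\lambda(V_n) \otimes D_{n-r}(V_n)$) with the stated factors and multiplicity one. Over $\mathbb{C}$ this is immediate from complete reducibility and the classical Pieri rule, but in arbitrary characteristic it requires a filtration argument; fortunately this is standard (the relevant tensor products have good/Schur filtrations by Boffi \cite{Bof} or by Donkin's theorem that the tensor product of two modules with Schur filtrations again has one, combined with a character/multiplicity count to pin down the factors). Everything else — the transpose bookkeeping $\nu' = \lambda^+$, the translation "vertical strip $\leftrightarrow$ horizontal strip", and the passage through $f$ — is routine given Corollary \ref{cormain1} and Lemma \ref{basicl}. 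One should also double-check the edge case $r = n$, where $Sym_0(V_n) = \Bbbk$ is trivial, $N = L_{\lambda'}(V_n)$, $\mathrm{HS}(\lambda,n) = \{\lambda\}$, and the statement degenerates to $\R(n)^\lambda \cong S^\lambda$, consistent with the Example following Definition \ref{Specht}.
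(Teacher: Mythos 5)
Your proposal is correct and follows essentially the same route as the paper's proof: restrict $\R(n)^\lambda$ to $\mathfrak{S}_n$ via Theorem \ref{main1}/Corollary \ref{cormain1}, invoke the characteristic-free Pieri filtration of $L_{\lambda'}(V_n)\otimes Sym_{n-r}(V_n)$ (the paper cites Donkin \cite{Do} and Boffi \cite{Bo}), apply the Schur functor through Lemma \ref{basicl}, and transpose to convert vertical strips on $\lambda'$ into horizontal strips on $\lambda$, with the dual argument using $K_\lambda(V_n)\otimes D_{n-r}(V_n)$ for part (2). The only cosmetic discrepancies are bibliographic (you attribute the Cauchy filtration to ``Wang'' rather than Weyman \cite{W}, and cite a nonexistent key \cite{Bof} for Boffi), which do not affect the argument.
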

\begin{proof} (1) From \cite{Do} or \cite{Bo}  we know that the $P_{\Bbbk}(n,n)$-module $L_{\lambda'} (V_n) \otimes Sym_{n-r}(V_n)$ has a  filtration by $P_{\Bbbk}(n,n)$-modules \[0 = N_t \subseteq N_{t-1} \subseteq \cdots \subseteq N_0=L_{\lambda'}(V_n) \otimes Sym_{n-r}(V_n)\] with factors the Schur modules $L_{\nu}(V_n)$ each appearing exactly once, where $\nu$ ranges over all partitions of $n$ containing $\lambda '$ such that the skew shape $\nu / \lambda'$ is a vertical  strip. Applying the Schur functor $f:P_{\Bbbk}(n,n) \to \m \mathfrak{S}_n$ we have a Specht filtration of $f(L_{\lambda'} (V_n) \otimes Sym_{n-r}(V_n) )$ according to Lemma \ref{basicl}
	\[0=f(N_t) \subseteq f(N_{t-1}) \subseteq \cdots \subseteq f(N_0)=f(L_{\lambda'} (V_n) \otimes Sym_{n-r}(V_n) )\] and the factors of the last filtration are $S^{\lambda^+}$ each appearing exactly once, where $\lambda^+$ ranges over $\mathrm{HS}(\lambda, n)$. From Theorem \ref{main1} we have $f(L_{\lambda'} (V_n) \otimes Sym_{n-r}(V_n) ) \cong  \G_{\Mo(n)}(L_{\lambda'}(V_n))$ as $\mathfrak{S}_n$-modules.
	
	(2) The proof of (2) is similar starting with a Weyl filtration of the $P_{\Bbbk}(n,n)$-module $K_\lambda(V_n) \otimes D_{n-r}(V_n)$.
\end{proof}

\begin{remark} Solomon \cite[Corollary 3.15]{Sol} obtained a branching rule for the restriction of an irreducible for the rook monoid $\IS_n$ to the symmetric group $\mathfrak{S}_n$ over a field of characteristic zero. Also, from Putcha \cite[Theorem 2.1(ii)]{Pu} an analogous result follows for the restriction of an irreducible of the full transformation monoid $\T_n$ that corresponds to a partition that is not a column over a field of characteristic zero. Theorem \ref{br1} generalizes these  via filtrations to any monoid $\Mo(n)$ satisfying $\mathfrak{S}_n \subseteq \Mo(n) \subseteq \PT_n$ and any infinite field $\Bbbk$.
\end{remark}
\subsection{An application of the first branching rule} As an application of Theorem \ref{br1}, we can show that when the characteristic of $\Bbbk$ is zero, the $\Mo(n)$-modules 
$\R^\lambda$ and $\R^\mu$ are not isomorphic for almost all distinct partitions $\lambda$ and $\mu$. To be precise we have the following result.

\begin{corollary}\label{dist}Suppose $\Bbbk$ is a field of characteristic zero and $\Mo(n)$ is a submonoid of $\PT_n$ containing $\mathfrak{S}_n$. Let $\lambda$, $\mu$ be distinct  partitions of $r$, $s$ respectively, where $r,s \le n$. Then the $\Mo(n)$-modules $\R^\lambda$ and $\R^\mu$ are not isomorphic except possibly for the pair $(\lambda, \mu)=((r), (n-r))$. \end{corollary}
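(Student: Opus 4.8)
The plan is to compare the two modules through their restrictions to $\mathfrak{S}_n$, where Theorem \ref{br1} gives an explicit Specht filtration, and then use semisimplicity in characteristic zero to convert filtrations into direct sum decompositions. Concretely, since $\Bbbk$ has characteristic zero, $\mathfrak{S}_n$ is semisimple, so $\mathrm{Res}^{\Mo(n)}_{\mathfrak{S}_n}(\R(n)^\lambda) \cong \bigoplus_{\lambda^+ \in \mathrm{HS}(\lambda,n)} S^{\lambda^+}$ and likewise for $\mu$. If $\R(n)^\lambda \cong \R(n)^\mu$ as $\Mo(n)$-modules, then in particular they are isomorphic as $\mathfrak{S}_n$-modules, so $\mathrm{HS}(\lambda,n) = \mathrm{HS}(\mu,n)$ as multisets (the Specht modules $S^{\nu}$, $\nu \vdash n$, being pairwise non-isomorphic and irreducible in characteristic zero, and each $\lambda^+$ occurring with multiplicity one in $\mathrm{HS}(\lambda,n)$). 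So the whole corollary reduces to the purely combinatorial claim: for distinct partitions $\lambda \vdash r$ and $\mu \vdash s$ with $r,s \le n$, one has $\mathrm{HS}(\lambda,n) = \mathrm{HS}(\mu,n)$ only when $\{\lambda,\mu\} = \{(r),(n-r)\}$.

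For the combinatorial step I would argue as follows. First, $\lambda$ and $\mu$ must have the same number of boxes removed to reach level $n$ in the same way — more precisely, membership of $\nu$ in $\mathrm{HS}(\lambda,n)$ forces $|\nu| = n$ and $\lambda \subseteq \nu$ with $\nu/\lambda$ a horizontal strip, which combinatorially says $\nu_1 \ge \lambda_1 \ge \nu_2 \ge \lambda_2 \ge \cdots$. The key observation is that $\lambda$ can be recovered from $\mathrm{HS}(\lambda,n)$ in most cases: the partition $\lambda$ is essentially the ``lower envelope'' obtained by intersecting all $\nu \in \mathrm{HS}(\lambda,n)$, i.e. $\lambda_i = \min_{\nu \in \mathrm{HS}(\lambda,n)} \nu_i$ for $i \ge 2$, while $\lambda_1$ is determined by $r = |\lambda| = n - (\text{size of the strip})$. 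One checks that this reconstruction fails precisely in the degenerate situation where $\mathrm{HS}(\lambda,n)$ is a single partition or where the interleaving constraints collapse, and a short case analysis on the number of parts of $\lambda$ shows this happens exactly for $\lambda = (r)$ (whose horizontal strips to size $n$ are the two-row shapes $(n-i,i)$ for $0 \le i \le \min(r,n-r)$) and $\lambda = (n-r)$, which produce the identical multiset $\{(n-i,i): 0 \le i \le \min(r,n-r)\}$ when $s = n-r$. All other $\lambda$ have at least two parts $\ge 1$ beyond the trailing reconstructible data, or are forced to have $\mathrm{HS}(\lambda,n)$ contain a partition whose second part pins down $\lambda_1$ as well.

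The main obstacle I anticipate is making the ``reconstruction of $\lambda$ from $\mathrm{HS}(\lambda,n)$'' precise and identifying \emph{exactly} the exceptional pair, rather than a larger exceptional family. The subtlety is that for $\lambda$ with $\ell(\lambda) = 1$, say $\lambda = (r)$, the horizontal strips only see the pair $(\lambda_1, \lambda_2) = (r,0)$ through two-row partitions, so the first part $r$ is genuinely ambiguous with $n-r$; but for $\lambda$ with $\ell(\lambda) \ge 2$, the presence of a genuine second part $\lambda_2 \ge 1$ means every $\nu \in \mathrm{HS}(\lambda,n)$ satisfies $\nu_2 \ge \lambda_2 \ge 1$, and by choosing the strip to lie entirely in the first row one gets $\nu = (\lambda_1 + (n-r), \lambda_2, \lambda_3, \dots) \in \mathrm{HS}(\lambda,n)$, from which $\min_i \nu_i$ for $i \ge 2$ recovers $\lambda_2, \lambda_3,\dots$ and then $\lambda_1$ is forced by $|\lambda| = r$. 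Handling the boundary where $\lambda = (1^r)$ or $r = n$ (so $\mathrm{HS}(\lambda,n) = \{\lambda\}$) and confirming these do not create extra coincidences is a finite check. Once the reconstruction lemma is in place the corollary is immediate, and the whole argument is short.
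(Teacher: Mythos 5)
Your high-level strategy is essentially the same one the paper uses: both arguments restrict to $\mathfrak{S}_n$ (you via Theorem~\ref{br1} plus semisimplicity, the paper via Corollary~\ref{cormain1} and Pieri's rule), reduce the question to comparing the multiplicity-free set of constituents $\{S^{\lambda^+}:\lambda^+\in\mathrm{HS}(\lambda,n)\}$ with the analogous set for $\mu$, and then argue combinatorially that these sets coincide only for the exceptional pair of rows. So the translation from representation theory to combinatorics is fine.

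The gap is in the combinatorial ``reconstruction lemma.'' The claim $\lambda_i=\min_{\nu\in\mathrm{HS}(\lambda,n)}\nu_i$ for $i\ge 2$ is correct (the $\nu$ with all $n-r$ added boxes in row $1$ achieves it), but the step ``then $\lambda_1$ is forced by $|\lambda|=r$'' is circular: $r$ is not an input, it is exactly the unknown $|\lambda|$, and from $\mathrm{HS}(\lambda,n)$ you cannot read $n-r$ directly since every $\nu$ in it has size $n$. Concretely, $\min_\nu \nu_1 = \max(\lambda_1,\,n-r)$, and since $\lambda_1+(n-r)=n-(\lambda_2+\lambda_3+\cdots)$ is forced once the tail is known, the coordinate-wise minima only determine the \emph{unordered pair} $\{\lambda_1,\,n-r\}$, leaving a genuine two-fold ambiguity $\lambda_1\leftrightarrow n-|\lambda|$ even when $\ell(\lambda)\ge 2$. (For example $\lambda=(4,1)$ and $\mu=(5,1)$ with $n=9$ have the same tail and the same coordinate-wise minima; they are separated only by the full set, since $(4,4,1)\in\mathrm{HS}((4,1),9)$ but $(4,4,1)\notin\mathrm{HS}((5,1),9)$.) So the lower-envelope data alone do \emph{not} prove the claim, and the ``short case analysis on the number of parts'' you promise is where the real work sits.

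The paper does precisely this work: rather than trying to invert the map $\lambda\mapsto\mathrm{HS}(\lambda,n)$, it works with the conjugates $\lambda',\mu'$, lets $j$ be the first index where they differ, and in a four-way case analysis (on whether $\mu'_j\ge 2$ or $\mu'_j=1$, and on whether $n-r<\ell(\mu')$ or not) exhibits an explicit partition $\nu\vdash n$ lying in exactly one of the two Pieri expansions $L_{\lambda'}\otimes Sym_{n-r}$ and $L_{\mu'}\otimes Sym_{n-s}$; the remaining case where both $\lambda$ and $\mu$ are single rows is dispatched by a dimension count using Proposition~\ref{bm}. You would need an argument of comparable precision (e.g.\ also exploiting the coordinate-wise \emph{maxima}, or directly constructing a distinguishing shape as the paper does) to close the gap; as written, your sketch identifies the correct answer but does not prove it.
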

\begin{proof} (1)  By Definition \ref{Specht} and Corollary \ref{cormain1}, it suffices to show that the irreducible decompositions of the $\mathfrak{S}_n$-modules $f(L_{\lambda'}(V_n) \otimes Sym_{n-r}(V_n))$ and $f(L_{\mu'}(V_n) \otimes Sym_{n-s}(V_n))$ are distinct, where  $f:P_{\Bbbk}(n,n) \to \m \mathfrak{S}_n$ is the Schur functor. Equivalently, we will show that the irreducible decompositions of the $\M_n(\Bbbk)$-modules $L_{\lambda'}(V_n) \otimes Sym_{n-r}(V_n)$ and $L_{\mu'}(V_n) \otimes Sym_{n-s}(V_n)$ are distinct. This will be done using nothing more than Pieri's rule (see \cite[Section 2.2, eq. (6)]{F} or \cite[(2.3.5) Corollary]{W}) according to which the $M_n({\Bbbk})$-module $L_{\lambda'}(V_n) \otimes Sym_{n-r}(V_n)$ is isomorphic to $\bigoplus_\nu L_\nu(V_n)$, where $\nu$ runs over all partitions of $n$ such that $\nu / \la'$ is a vertical strip of $n-r$ boxes.
	
	Let us assume that at least one of the partitions $\lambda$ and $\mu$ is not a row.
	
For the partitions $\lambda'$ and $\mu'$, let $j$ be the least positive integer such that $\lambda_j' \neq \mu_j'$. Without loss of generality we may assume $\lambda_j' < \mu_j'.$
	
	Case 1. Let $\mu_j' \ge 2$. Consider the partition $(\lambda', 1^{n-r})$ obtained by appending $n-r$ rows to the bottom of $\lambda'$ each of length 1. Then according to Pieri's rule, the Schur module $L_{(\lambda', 1^{n-r})}(V_n)$ is a summand of $L_{\lambda'}(V_n) \otimes  Sym_{n-r}(V_n)$. By the same rule,  $L_{(\lambda', 1^{n-r})}(V_n)$ is not a summand of $L_{\mu'}(V_n) \otimes  Sym_{n-s}(V_n)$ because the $j$th row of the partition of each irreducible summand of  $L_{\mu'}(V_n) \otimes  Sym_{n-s}(V_n)$ has length  at least $\mu'_j$, while the $j$th row of $(\lambda', 1^{n-r})$ has length $\lambda_j' < \mu_j'$.
	
	Case 2. Let $\mu_j'=1$. Then $\lambda_j'=0$. Let $k$ be the largest positive integer such that $\mu_k' \ge 2$. Such a $k$ exists, since otherwise we would have $\mu'=(1^s)$. This implies by the definition of $j$ that  $\lambda'=(1^r)$, so that both of $\lambda'$ and $\mu'$ are columns, contradicting the hypothesis.
	
	For the length of $\lambda'$ we have $\ell(\lambda')=j-1 < j \le \ell(\mu')$ and thus $\ell(\lambda') <  \ell(\mu').$
We continue by distinguishing two subcases.

Case 2a. Suppose $n-r<\ell(\mu')$. Consider the partition $\nu$ obtained by adding one box to each of the first $n-r$ rows of $\lambda'$. Then $L_\nu$ is a summand of $L_{\lambda'}(V_n) \otimes Sym_{n-r}(V_n)$ by Pieri's rule. However, $L_\nu$ is not a summand of $L_{\mu'}(V_n) \otimes  Sym_{n-s}(V_n)$ because the length of the partition $\nu$ is equal to \[\ell(\nu)= \max\{\ell(\lambda'), n-r\}=\max\{j-1, n-r\} < \ell(\mu'),\]
while the length of the partition of each irreducible summand of $L_{\mu'}(V_n) \otimes  Sym_{n-s}(V_n)$ is  at least $\ell(\mu')$ according to Pieri's rule.

Case 2b. Suppose $n-r \ge \ell(\mu')$. From the definition of $j$ we have $|\mu'|=|\lambda'|+\ell(\mu')-j+1$. So $s=r+\ell(\mu')-j+1$. Now using the hypothesis of Case 2b we obtain $n-s=n-r-\ell(\mu')+j-1 \ge j-1$
and thus \begin{equation}\label{inj}n-s \ge j-k.\end{equation}
Consider the partition $\xi$ obtained by adding one box to each of the rows $k+1, k+2, ..., k+n-s$ of $\mu'$, cf. Figure 1.	(Recall that $\mu_k' \ge2$ and thus $\xi$ is indeed a partition). From (\ref{inj}) we have $k+n-s \ge j$. This means the $j$th row of $\xi$ has length $\mu'_j + 1=2$. From Pieri's rule, $L_\xi(V_n)$ is a summand of $L_{\mu'}(V_n) \otimes Sym_{n-s}(V_n)$. However, $L_\xi(V_n)$ is not a summand of $L_{\lambda'}(V_n) \otimes  Sym_{n-r}(V_n)$ because the length of the $j$th row of the partition of each irreducible summand of $L_{\lambda'}(V_n) \otimes  Sym_{n-r}(V_n)$ is at most $\lambda_j'+1=1$ by Pieri's rule.
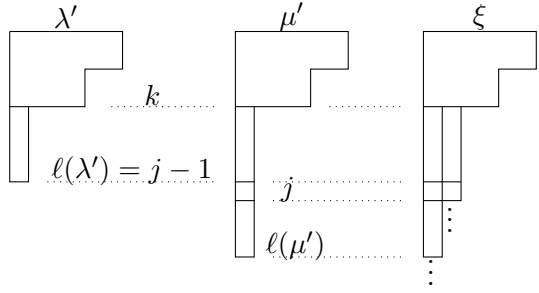
\begin{figure}
	\begin{tikzpicture}[scale=0.5]
		\draw (0,0)-- (3,0)--(3,-1)--(2,-1)--(2,-2)--(0,-2)--(0,0);
		\draw (0,-2)-- (0,-4)--(0.5,-4)--(0.5,-2);
		\draw (6,0)-- (9,0)--(9,-1)--(8,-1)--(8,-2)--(6,-2)--(6,0);
		\draw (6,-2)-- (6,-4)--(6.5,-4)--(6.5,-2);
		\draw (6,-4)--(6,-4.5)--(6.5,-4.5)--(6.5,-4);
		\draw (6,-4.5)--(6,-6)--(6.5,-6)--(6.5,-4.5);
		\draw [dotted] (1,-4)--(5.5, -4);
		\node (1) at (3.25, -3.7) {$\ell(\lambda')=j-1$};
		\draw [dotted] (7,-4.5)--(10.5, -4.5);
		\node (2) at (7.4, -4.2) {$j$};
		\draw [dotted] (2.7,-2)--(5.5, -2);
		\node (3) at (3.8, -1.7) {$k$};
		\node (4) at (7.6, -5.7) {$\ell(\mu')$};
		\draw [dotted] (7,-6)--(10.5, -6);
		\node (5) at (1.5, 0.4) {$\lambda'$};
		\node (6) at (7.5, 0.4) {$\mu'$};
		\draw (11,0)-- (14,0)--(14,-1)--(13,-1)--(13,-2)--(11,-2)--(11,0);
		\draw (11,-2)-- (11,-4)--(11.5,-4)--(11.5,-2);
		\draw (11,-4)--(11,-4.5)--(11.5,-4.5)--(11.5,-4);
		\draw (11,-4.5)--(11,-6)--(11.5,-6)--(11.5,-4.5);
		\draw [dotted] (7,-4)--(10.5, -4);
		\draw [dotted] (8.5,-2)--(10.5, -2);
		\draw (12,-2)--(12,-4.5)--(11.5,-4.5);
		\node (7) at (11.7,-4.8) {$\vdots$};
		\node (7) at (11.2,-6.2) {$\vdots$};
		\draw (12,-4)--(11.5,-4);
		\node (9) at (12.5, 0.4) {$\xi$};
	\end{tikzpicture}
	\caption{The partitions $\lambda'$, $\mu'$ in Case 2 and the partition $\xi$ in Case 2b.}
\end{figure}
We have shown part (1) of the Corollary under the assumption that at least one of $\lambda$ and $\mu$ is not a  row. Assume now that each of
$\lambda$ and $\mu$ is a  row, $\lambda=(r)$ and $\mu=(s)$, where $r,s$ satisfy $r,s \le n$ and $r \neq s$ and $r \neq n-s$. From Theorem \ref{bm} we have $\dim \R^{(r)} = \tbinom{n}{r}, \ \dim \R^{(s)} = \tbinom{n}{s}.$ From the assumptions on $r,s$ it follows that $\tbinom{n}{r} \neq \tbinom{n}{s}$ and thus the modules $\R^{(r)}$ and $\R^{(s)} $ are not isomorphic.\end{proof}

\begin{remark}Suppose $\Mo(n)$ is the symmetric inverse monoid $\IS_n$ or the partial transformation monoid $\PT_n$. For $r\le n $ consider the one rowed partitions $\lambda = (r)$ and $\mu=(n-r)$ and assume that these are distinct. We will show that the corresponding $\Mo(n)$-modules $\R^{(r)}$ and $\R^{(n-r)}$ (for which Corollary \ref{dist} provides no information) are not isomorphic. Indeed, let us assume $r<n-r$. Consider the diagonal matrix $A=\diag(1^r,0^{n-r}) \in  \IS_n $. For the canonical basis $\{e_1,\dots, e_n\}$ of the natural module of $\Mo(n)$ we have $Ae_i=e_i$ for all $i\le r$, and $Ae_j=0$ for all $j\ge r+1$. 
	
		A basis of $\R^{(r)}$ (respectively $\R^{(n-r)}$) is given by the cosets of
		the square-free monomials in $Sym_r(V_n)$ (respectively $Sym_{n-r}(V_n)$)
		in $e_1,\dots,e_n$ of degree $r$ (respectively $n-r$), by Theorem \ref{bm}.
		
		Since $r<n-r$, every square-free monomial of degree $n-r$ in
		$e_1,\dots,e_n$ contains at least one factor $e_j$ with $j\ge r+1$. Hence $A$
		sends every basis element of $\R^{(n-r)}$ to zero. Thus $A$ acts as zero on
		$\R^{(n-r)}$. On the other hand, the action of $A$ on $\R^{(r)}$ is nonzero,
		since $A$ fixes the coset of $e_1\cdots e_r$. Thus the $\Mo(n)$-modules
		$\R^{(r)}$ and $\R^{(n-r)}$ are not isomorphic.
\end{remark}

\subsection{Second branching rule}
Let $\Mo(n)$ be one of the monoids $\IS_n, \PT_n$ or $\T_n$. For $1 \le s < n$ consider the monomorphism of monoids $\Mo(s) \times \Mo(n-s) \to \Mo(n)$
\[(A,B) \mapsto
\left(
\begin{array}{c|c}
A	&   \textbf{0} \\
	\hline
	\textbf{0}&B 
\end{array}
\right),\]
where the matrix $A$ (respectively, $B$) is located in the upper left $s \times s$ corner (respectively, lower right $(n-s) \times (n-s)$ corner). Thus every $\Mo(n)$-module can be considered an $\Mo(s) \times \Mo(n-s)$-module. In Section \ref{49}  we will prove the following branching rule.

\begin{theorem}[Branching rule for $\Mo(s) \times \Mo(n-s) \to \Mo(n)$]\label{br2} Let $\Mo(n)$ be one of the monoids $\IS_n, \PT_n$ or $\T_n$ and let $\lambda / \mu$ be a skew partition of $r$, where $1 \le r \le n$. Then as an $\Mo(s) \times \Mo(n-s)$-module, $\R(n)^{\lambda / \mu}$ (respectively $\R(n)_{\lambda / \mu}$) has a  filtration with factors $\R(s)^{\nu / \mu} \otimes \R(n-s)^{\lambda / \nu}$ (respectively $\R(s)_{\nu / \mu} \otimes \R(n-s)_{\lambda / \nu}$) each appearing exactly once, where ${\nu}$ ranges over all partitions satisfying \begin{center}
		$\mu \subseteq \nu \subseteq \lambda$, \ $|\nu / \mu | \le s$ \ and \ $|\lambda / \nu | \le n-s$.
	\end{center}
\end{theorem}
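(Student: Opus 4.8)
The plan is to reduce the statement to two classical facts: the characteristic‑free decomposition of a skew Schur (respectively Weyl) module under $\M_s(\Bbbk)\times\M_{n-s}(\Bbbk)$, and the exactness and tensor‑multiplicativity of the symmetrized Schur functor. Throughout I identify $V_n=V_s\oplus V_{n-s}$, where $V_s$ is spanned by $e_1,\dots,e_s$ and $V_{n-s}$ by $e_{s+1},\dots,e_n$; this splitting is compatible with the block‑diagonal embedding $\Mo(s)\times\Mo(n-s)\hookrightarrow\Mo(n)\hookrightarrow\M_n(\Bbbk)$ appearing in the theorem. Since $\Mo(n)$ is one of $\IS_n,\PT_n,\T_n$, the image of this embedding is again a submonoid of $\PT_n$.

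First I would set up a two‑variable version of $\G$. Restricting $M\in P_{\Bbbk}(n,r)$ along $\M_s(\Bbbk)\times\M_{n-s}(\Bbbk)\hookrightarrow\M_n(\Bbbk)$ makes $M$ a bipolynomial representation, and since the diagonal torus $T_n(\Bbbk)$ lies inside $\M_s(\Bbbk)\times\M_{n-s}(\Bbbk)$, the weight‑space decomposition (\ref{ws}) is unchanged. The key combinatorial observation is that $\Lambda(n,r)'$ is the disjoint union, over $r_1+r_2=r$, of the products $\Lambda(s,r_1)'\times\Lambda(n-s,r_2)'$; hence the subspace $M''$, and therefore $\G_{\Mo(n)}(M)=M/M''$, depend only on the restricted structure. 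Writing $\G_{\Mo(s)\boxtimes\Mo(n-s)}(N):=N/N''$ on bipolynomial representations $N$ of $\M_s(\Bbbk)\times\M_{n-s}(\Bbbk)$, one gets for free that $N''$ is an $\Mo(s)\times\Mo(n-s)$‑submodule (apply Lemma \ref{sumSn}(2) to the block‑diagonal copy of $\Mo(s)\times\Mo(n-s)$ in $\PT_n$), that $\G_{\Mo(s)\boxtimes\Mo(n-s)}$ is exact (the proof of Proposition \ref{exactFM} applies verbatim, as taking weight spaces is exact), and directly that
\[
Res^{\Mo(n)}_{\Mo(s)\times\Mo(n-s)}\,\G_{\Mo(n)}(M)\;=\;\G_{\Mo(s)\boxtimes\Mo(n-s)}(Res\,M).
\]
Finally, for $N_1\in P_{\Bbbk}(s,r_1)$ and $N_2\in P_{\Bbbk}(n-s,r_2)$, writing $N_i=N_i'\oplus N_i''$ one computes $(N_1\otimes N_2)''=N_1'\otimes N_2''\,\oplus\,N_1''\otimes N_2'\,\oplus\,N_1''\otimes N_2''$, so that there is a natural isomorphism
\[
\G_{\Mo(s)\boxtimes\Mo(n-s)}(N_1\otimes N_2)\;\cong\;\G_{\Mo(s)}(N_1)\otimes\G_{\Mo(n-s)}(N_2)
\]
of $\Mo(s)\times\Mo(n-s)$‑modules.

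Next I would invoke the decomposition of $L_{\lambda'/\mu'}(V_s\oplus V_{n-s})$ (respectively $K_{\lambda/\mu}(V_s\oplus V_{n-s})$) as a filtration of $\M_s(\Bbbk)\times\M_{n-s}(\Bbbk)$‑modules whose associated graded is $\bigoplus_{\mu\subseteq\nu\subseteq\lambda}L_{\nu'/\mu'}(V_s)\otimes L_{\lambda'/\nu'}(V_{n-s})$ (respectively $\bigoplus_{\mu\subseteq\nu\subseteq\lambda}K_{\nu/\mu}(V_s)\otimes K_{\lambda/\nu}(V_{n-s})$); this is the characteristic‑free decomposition of a skew Schur/Weyl module of a direct sum from \cite[Section II.4]{ABW}, and the reindexing sets $\rho=\nu'$, using $\mu'\subseteq\rho\subseteq\lambda'\iff\mu\subseteq\rho'\subseteq\lambda$. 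This is a filtration in the category of bipolynomial $\M_s(\Bbbk)\times\M_{n-s}(\Bbbk)$‑representations (not in $P_{\Bbbk}(n,r)$!), so applying the exact functor $\G_{\Mo(s)\boxtimes\Mo(n-s)}$, then the displayed identities and Definition \ref{Specht} (which gives $\G_{\Mo(s)}(L_{\nu'/\mu'}(V_s))=\R(s)^{\nu/\mu}$, $\G_{\Mo(n-s)}(L_{\lambda'/\nu'}(V_{n-s}))=\R(n-s)^{\lambda/\nu}$, and likewise for the Weyl modules), produces a filtration of $\R(n)^{\lambda/\mu}$ (respectively $\R(n)_{\lambda/\mu}$) by $\Mo(s)\times\Mo(n-s)$‑submodules with factors $\R(s)^{\nu/\mu}\otimes\R(n-s)^{\lambda/\nu}$ (respectively $\R(s)_{\nu/\mu}\otimes\R(n-s)_{\lambda/\nu}$), $\nu$ running over all $\mu\subseteq\nu\subseteq\lambda$. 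To obtain exactly the index range in the statement, I would observe that $\R(s)^{\nu/\mu}=0$ whenever $|\nu/\mu|>s$ — because then $\Lambda(s,|\nu/\mu|)'=\emptyset$, forcing $(L_{\nu'/\mu'}(V_s))'=0$ and hence $\G_{\Mo(s)}(L_{\nu'/\mu'}(V_s))=0$ — and similarly $\R(n-s)^{\lambda/\nu}=0$ unless $|\lambda/\nu|\le n-s$; discarding the zero factors leaves precisely the $\nu$ with $\mu\subseteq\nu\subseteq\lambda$, $|\nu/\mu|\le s$, $|\lambda/\nu|\le n-s$.

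The main obstacle is the bookkeeping in the second paragraph: establishing cleanly that the restriction of $\G_{\Mo(n)}(M)$ to $\Mo(s)\times\Mo(n-s)$ is computed by the two‑variable functor, and that the latter is exact and tensor‑multiplicative, so that the $\M_s(\Bbbk)\times\M_{n-s}(\Bbbk)$‑filtration from \cite{ABW} — which only exists after restriction, and is not a filtration in $P_{\Bbbk}(n,r)$ — can be pushed through $\G$. Once this formalism is in place, everything else is a direct application of results already available.
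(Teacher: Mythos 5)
Your proposal is correct, and it rests on the same key ingredient the paper uses — the characteristic-free filtration of $L_{\lambda'/\mu'}(V_s\oplus V_{n-s})$ and $K_{\lambda/\mu}(V_s\oplus V_{n-s})$ from \cite[Section II.4]{ABW} — but organizes the passage to $\Mo(s)\times\Mo(n-s)$-modules in a more functorial way. The paper quotients the ABW filtration directly by $N=\bigl(L_{\lambda'/\mu'}(V_n)\bigr)''$ to obtain the filtration (\ref{fil4}), constructs explicit surjections $\bar\beta_\nu\colon\R(s)^{\nu'/\mu}\otimes\R(n-s)^{\lambda/\nu'}\to(M_\nu+N)/(\dot M_\nu+N)$ by tracking basis elements through (\ref{iso3}), and then closes with a dimension count using Proposition \ref{bm}, the skew branching identity $\sum_{|\nu|=|\mu|+t}f^{\nu/\mu}f^{\lambda/\nu}=f^{\lambda/\mu}$, and Chu--Vandermonde. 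You instead introduce a two-variable symmetrized Schur functor $\G_{\Mo(s)\boxtimes\Mo(n-s)}$ on bipolynomial $\M_s(\Bbbk)\times\M_{n-s}(\Bbbk)$-representations, verify that it is exact (the proof of Proposition \ref{exactFM} does carry over, since taking $T_s\times T_{n-s}$-weight spaces is exact) and tensor-multiplicative (your identity $(N_1\otimes N_2)''=N_1'\otimes N_2''\oplus N_1''\otimes N_2'\oplus N_1''\otimes N_2''$ is correct), and push the ABW filtration through the exact functor. This avoids the dimension count entirely: exactness transports the filtration, multiplicativity identifies the layers as $\R(s)^{\nu/\mu}\otimes\R(n-s)^{\lambda/\nu}$, and the zero factors for $|\nu/\mu|>s$ or $|\lambda/\nu|>n-s$ are discarded because $\Lambda(s,t)'=\emptyset$ for $t>s$. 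The price is a short verification that $\G_{\Mo(n)}(M)$ restricted to $\Mo(s)\times\Mo(n-s)$ agrees with the two-variable functor applied to the restricted module, which you rightly reduce to $T_n(\Bbbk)\subseteq\M_s(\Bbbk)\times\M_{n-s}(\Bbbk)$ and the disjoint-union decomposition of $\Lambda(n,r)'$; in exchange you gain transparency and reusable lemmas. One small remark: the terms $M_\nu+N$ in (\ref{fil4}) are preserved only by the block-diagonal $\Mo(s)\times\Mo(n-s)$, not by all of $\Mo(n)$ (despite what the paper's phrasing there might suggest); this is exactly what the theorem requires, and your formulation makes that distinction explicit from the outset.
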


\subsection{$GL_n$ branching rule}For the proof of Theorem \ref{br2} we will apply the branching rule for the general linear group $\GL_n(\Bbbk)$ which we now recall. We follow closely \cite[p. 238--241]{ABW}. 

Let us consider the lexicographic order of partitions. For partitions $\xi=(\xi_1, \dots, \xi_q), \nu= (\nu_1, \dots, \nu_q)$, we write $\xi \ge \nu$  if there is an index $i$ such that $\xi_1=\nu_1, \dots, \xi_{i-1}=\nu_{i-1}$ and $\xi_i \ge \nu_i$. We write $\xi > \nu$ if $\xi \ge \nu$ and $\xi \neq \nu$.

Let $\lambda / \mu$ be a skew partition of $r$, where $r \le n$. Consider the direct sum decomposition of the natural $\GL_n(\Bbbk)$-module $V_n$ as vector spaces\[V_n= F \oplus G\] where $F$ and $G$ are the subspaces of $V_n$ with basis $e_1, \dots, e_s$ and $e_{s+1}, \dots, e_n$ respectively. 

If $a, b$ are nonnegative integers, we have the map \[\Lambda^a(F) \otimes \Lambda^b(G) \to \Lambda^{a+b}(V_n)\] that sends $f \otimes g \mapsto fg$. For a partition $\nu$ satisfying $\mu' \subseteq \nu \subseteq \lambda'$ we obtain a map \[\Lambda^{\nu / \mu'}(F) \otimes \Lambda^{\lambda'/\nu}(G) \to \Lambda^{\lambda'/\mu'}(V_n)\]
by taking the tensor product of the previous maps $\Lambda^{\nu_i-\mu'_i}(F) \otimes \Lambda^{\lambda'_i-\nu_i}(G) \to \Lambda^{\lambda'_i-\mu'_i}(V_n)$ for $i=1,\dots, \ell(\lambda' / \mu')$, where $\ell(\lambda' / \mu')$ is the number of parts of $\lambda'/\mu'$. Define subspaces ${Q}_\nu$ and $\dot{Q}_\nu$ of $\Lambda^{\lambda'/\mu'}(V_n)$ as follows 
\begin{align}\label{defQ}
Q_\nu&= \Ima\big(\sum_{\substack{\mu' \subseteq \xi \subseteq \lambda'\\\xi \ge \nu}}\Lambda^{\xi /\mu'}(F) \otimes \Lambda^{\lambda' /\xi}(G) \to \Lambda^{\lambda'/\mu'}(V_n)\big),\\
		\dot{Q}_\nu&= \Ima \big(\sum_{\substack{\mu' \subseteq \xi \subseteq \lambda'\\\xi > \nu}}\Lambda^{\xi/ \mu'}(F) \otimes \Lambda^{\lambda'/\xi}(G) \to \Lambda^{\lambda'/\mu'}(V_n)\big),
\end{align}
where in the first sum $\xi$ ranges over all partitions such that $\mu' \subseteq \xi \subseteq \lambda'$ and $\xi \ge \nu$, and in the second sum $\xi$ ranges over all partitions such that $\mu' \subseteq \xi \subseteq \lambda'$ and $\xi > \nu$.
Finally, define subspaces ${M}_\nu$ and $\dot{M}_\nu$ of the Schur module $L_{\lambda'/\mu'}(V_n)$ as follows 
\begin{align*}
	M_\nu&= d_{\lambda'/\mu'}(Q_\nu),\\
\dot{M}_\nu&= d_{\lambda'/\mu'}(\dot{Q}_\nu).
\end{align*}
(With the notation of \cite[Definition II.4.8]{ABW}, our $M_\nu$ is equal to $M_\nu (L_{\lambda' / \mu'}(F\bigoplus G))$).

It follows from the definition that if $\nu, \xi$ are partitions such that $\mu' \subseteq \nu \subseteq \lambda'$, $\mu' \subseteq \xi \subseteq \lambda'$ and $\nu \subseteq \xi$, then $M_\xi \subseteq M_\nu$. Hence we have a filtration  \begin{equation}\label{fil3}0 \subseteq M_{\lambda'} \subseteq \cdots \subseteq M_{\nu} \subseteq \cdots \subseteq M_{\mu'}= L_{\lambda' / \mu'}(V_n)\end{equation}
of $L_{\lambda' / \mu'}(V_n)$ by $\GL_s \times \GL_{n-s}$-submodules. Now according to \cite[Theorem II.4.11]{ABW} we have an isomorphism of $\GL_s \times \GL_{n-s}$-modules \begin{equation}\label{iso3}L_{\nu / \mu'}(F) \otimes L_{\lambda' / \nu}(G) \cong M_\nu / \dot{M}_{\nu}.\end{equation}
Moreover the isomorphism in (\ref{iso3}) sends $d_{\nu / \mu'}(F)(X_{T_1}) \otimes d_{\lambda' / \nu}(G)(X_{T_2})$  to $d_{\lambda' / \mu'}(V_n)(X_{T}) + \dot{M}_{\nu}$, where the $i$th row of $T$ is  
\begin{center}
	\ytableausetup
{smalltableaux, mathmode, boxframe=normal, boxsize=1.6em}\begin{ytableau}
	a_{1} & \cdots & a_{u} &b_1 & \cdots & b_{v}
\end{ytableau}
\end{center}
if the $i$th rows of $T_1$ and $T_2$ are respectively 

\begin{center}
	\ytableausetup
	{smalltableaux, mathmode, boxframe=normal, boxsize=1.6em}\begin{ytableau}
		a_{1} & \cdots & a_{u} 
	\end{ytableau}, \ \ytableausetup
	{smalltableaux, mathmode, boxframe=normal, boxsize=1.6em}\begin{ytableau}
	b_{1} & \cdots & b_{v} 
	\end{ytableau},
\end{center}
for $i=1, \dots, \ell(\lambda' / \mu')$, where $u=\nu_i - \mu'_i$ and $v=\lambda'_i - \nu_i$.
\begin{example*}\label{GLbranchingexample}
	We illustrate the filtration in (\ref{fil3}) in a small case. Let 
$
	\lambda=(2,2,1),\ \mu=(1).
$
	Then
	$
	\lambda'=(3,2),\ \mu'=(1).
$
Let
	\(n=7\) and write
	\[
	V_7=F\oplus G,
	\qquad
	F=\langle e_1,e_2,e_3,e_4\rangle,
	\qquad
	G=\langle e_5,e_6,e_7\rangle .
	\]
	The partitions \(\nu\) satisfying
	$
	\mu'\subseteq \nu\subseteq \lambda'
$ are
$
	(3,2),\ (3,1),\ (3),\ (2,2),\ (2,1),\ (2),\ (1,1),\ (1)
	$ and we have the filtration \[0 \subseteq M_{(3,2)} \subseteq
	M_{(3,1)}\subseteq M_{(3)} \subseteq M_{(2,2)} \subseteq M_{(2,1)} \subseteq M_{(2)} \subseteq M_{(1,1)} \subseteq M_{(1)}=L_{(3,2)/(1)}(V_7).\]

	For $\nu=\lambda'=(3,2)$ there is only one $\xi$ to consider in eq. (\ref{defQ}), namely $ \xi= \lambda'=(3,2)$, and thus we have \begin{align*} Q_{(3,2)} = \Ima\big(\Lambda^{(3,2) /(1)}(F)\otimes \Lambda^0(G)  \to \Lambda^{(3,2) /(1)}(V_n)\big).\end{align*}
	Hence $M_{(3,2)}$ is the subspace of $L_{(3,2)/(1)}(V_7)$ spanned by the elements $d_{(3,2)/(1)}(X_T)$, where $T$ runs over the tableaux 	\begin{equation}\label{tab(3,2)} \begin{ytableau}
		\none & f_1 &  f_2\\
		f_3 & f_4
	\end{ytableau} \ \  \text{such that}  \ f_i \in \{1,2,3,4\}.\end{equation}
	For $\nu=(3,1)$ there are two partitions $\xi$ to consider in eq. (\ref{defQ}), namely $ \xi= \lambda'=(3,2)$ and $ \xi=\nu=(3,1)$, and thus we have 
	\begin{equation*} Q_{(3,1)} = \Ima\big(\Lambda^{(3,2) /(1)}(F)\otimes\Lambda^{0}(G) + \Lambda^{(3,1)/(1)}(F)\otimes \Lambda^{(3,2)/(3,1)}(G) \to \Lambda^{(3,2) /(1)}(V_n)\big).\end{equation*}
Hence $M_{(3,1)}$ is the subspace of $L_{(3,2)/(1)}(V_7)$ spanned by the elements $d_{(3,2)/(1)}(X_T)$, where  $T$ runs over the tableaux (\ref{tab(3,2)}) and over the tableaux
\begin{equation}\label{tab(3,1)} \begin{ytableau}
		\none & f_1 &  f_2\\
		f_3 & g
	\end{ytableau} \ \  \text{such that}  \ f_i \in \{1,2,3,4\}, \ g \in \{5,6,7\}.\end{equation}
For $\nu=(3,1)$, the isomorphism (\ref{iso3}) is induced by the map \begin{align*}L_{(3,1)/(1)}(F) \otimes L_{(3,2)/(3,1)}(G) &\to L_{(3,2)/(1)}(V_7),\\  d_{(3,1)/(1)}(X_{T_1}) \otimes d_{(3,2)/(3,1)}(X_{T_2}) &\mapsto d_{(3,2)/(1)}(X_{T}), \end{align*}
where, if  \begin{align*}&T_1=\begin{ytableau}
	\none & f_1 &  f_2\\
	f_3 
\end{ytableau} \in \tab_{(3,1)/(1)}([4]),\\ \ \ 
&T_2=\begin{ytableau}
	\none & \none & \none \\  \none & g
\end{ytableau} \in \tab_{(3,2)/(3,1)}(\{5,6,7\}), \end{align*}
then $T$ is defined by \[T=\begin{ytableau}
	\none & f_1 &  f_2\\
	f_3 &g \end{ytableau}\in \tab_{(3,2)/(1)}([7]).
\]

\end{example*}

\subsection{Proof of Theorem \ref{br2}}\label{49}
With the previous preparation, we may now prove the branching rule associated with  $\Mo(s) \times \Mo(n-s) \to \Mo(n)$.
\begin{proof}[Proof of Theorem \ref{br2}] We adopt the notation and assumptions of Theorem \ref{br2}. From the filtration (\ref{fil3}), we obtain the filtration  \begin{equation}\label{fil4}0 \subseteq \frac{M_{\lambda'}+N }{N}\subseteq \cdots \subseteq \frac{M_{\nu}+N}{N} \subseteq \cdots \subseteq \frac{M_{\mu'}+N}{N}= \frac{L_{\lambda' / \mu'}(V_n)}{N} = \R(n)^{{\lambda / \mu}}\end{equation}
of 	$\R(n)^{{\lambda / \mu}}$ by \(\Mo(s)\times \Mo(n-s)\)-submodules, where $N=\bigoplus_{\alpha \in \Lambda(n,r)''}(L_{\lambda' / \mu'}(V_{n}))^\alpha$. Using the map (\ref{iso3}) we have the following composition  \begin{equation}\label{iso4}\beta_\nu :L_{\nu / \mu'}(F) \otimes L_{\lambda' / \nu}(G) \cong M_\nu / \dot{M}_{\nu} \to \frac{M_\nu +N}{\dot{M}_{\nu}+N},\end{equation}
where the map on the right is induced by the inclusion $M_\nu \to M_\nu + N$. The map $\beta_\nu$ is clearly surjective.  Consider an element $v \in L_{\nu / \mu'}(F) \otimes L_{\lambda' / \nu}(G)$ of the form $v=d_{\nu / \mu'}(F)(X_{T_1}) \otimes d_{\lambda' / \nu}(G)(X_{T_2})$. If the tableau $T_1$ has an entry of weight greater than or equal to 2, then so does the tableau $T$ defined after (\ref{iso3}). Hence we have $d_{\lambda' / \mu'}(V_n)(X_{T}) \in N$ according to the definition of $N$. In other words we have $v \in \ker \beta_\nu$. Likewise we conclude that $w \in \ker \beta_\nu$ if $w$ is of the form $w=d_{\nu / \mu'}(F)(X_{T_1}) \otimes d_{\lambda' / \nu}(G)(X_{T_2})$ such that the tableau $T_2$ has an entry of weight greater than or equal to 2. Therefore \[N_1 \otimes L_{\lambda' / \nu}(G) +L_{\nu / \mu'}(F)\otimes N_2 \subseteq \ker\beta_\nu, \] where $N_1$ (respectively, $N_2$)is the subspace of $L_{\nu / \mu'}(F)$ (respectively $L_{\lambda' / \nu}(G)$) spanned by elements  $d_{\nu / \mu'}(F)(X_{T_1})$ (respectively $d_{\lambda' / \nu}(G)(X_{T_2})$ ) such that $T_1$ (respectively $T_2$) has an entry of weight greater than or equal to 2. According to Definition \ref{Specht} we have
	$\R(s)^{\nu' / \mu} = L_{\nu / \mu'}(F) / N_1$  and   $\R(n-s)^{\lambda / \nu'} = L_{\lambda' / \nu}(G) / N_2$. Thus the map $\beta_\nu$ induces a surjective map \begin{equation}\label{bar1} \bar{\beta_{\nu}}: 
	\R(s)^{\nu' / \mu} \otimes \R(n-s)^{\lambda / \nu'} \to \frac{M_\nu +N}{\dot{M}_{\nu}+N}.
\end{equation} We intend to show that $\bar{\beta}_\nu$ is an isomorphism.

By summing with respect to the partitions $\nu$ that satisfy $\mu' \subseteq \nu \subseteq \lambda'$, we obtain a surjective linear map 
\begin{equation}\label{bar2} \bigoplus_{\mu' \subseteq \nu \subseteq \lambda' }\R(s)^{\nu' / \mu}\otimes \R(n-s)^{\lambda / \nu'} \to \bigoplus_{\mu' \subseteq \nu \subseteq \lambda' }\frac{M_\nu +N}{\dot{M}_{\nu}+N} \cong \R(n)^{{\lambda / \mu}}.
\end{equation}
	We compute the dimension of $\bigoplus_{\mu' \subseteq \nu \subseteq \lambda' }\R(s)^{\nu' / \mu}\otimes \R(n-s)^{\lambda / \nu'}$. First we note that for a partition $\nu$ we have $\mu' \subseteq \nu \subseteq \lambda'$ if and only if $\mu \subseteq \nu' \subseteq \lambda$ and thus 
	\[ \dim \big( \bigoplus_{\mu' \subseteq \nu \subseteq \lambda' }\R(s)^{\nu' / \mu}\otimes \R(n-s)^{\lambda / \nu'}\big) = \dim\big(\bigoplus_{\mu \subseteq \nu \subseteq \lambda }\R(s)^{\nu / \mu}\otimes \R(n-s)^{\lambda / \nu}\big).\]
	In the computation that follows, for the first and last equality we use Theorem \ref{bm}, in the second we rearrange terms, in the fourth we apply the branching rule identity $\sum_{\substack{\mu \subseteq \nu \subseteq \lambda \\ |\nu|=|\mu|+t }} f^{\nu / \mu}f^{\lambda / \nu} =f^{\lambda / \mu}$ for skew Specht modules, where $t$ is a fixed integer satisfying $0 \le t \le r$,  and in the fifth we use the Chu-Vandermonde identity:
	\begin{align*}&\dim\big(\bigoplus_{\mu \subseteq \nu \subseteq \lambda }\R(s)^{\nu / \mu}\otimes \R(n-s)^{\lambda / \nu}\big)=\sum_{\mu \subseteq \nu \subseteq \lambda }\binom{s}{|\nu / \mu |} f^{\nu / \mu}\binom{n-s}{|\lambda / \nu |} f^{\lambda / \nu}\\&=\sum_{t=0}^r \ \sum_{\substack{\mu \subseteq \nu \subseteq \lambda \\ |\nu|=|\mu|+t }}\binom{s}{|\nu / \mu |} \binom{n-s}{|\lambda / \nu |} f^{\nu / \mu}f^{\lambda / \nu}=
		\sum_{t=0}^r \ \binom{s}{t} \binom{n-s}{r-t} \sum_{\substack{\mu \subseteq \nu \subseteq \lambda \\ |\nu|=|\mu|+t }} f^{\nu / \mu}f^{\lambda / \nu}\\&=
		\sum_{t=0}^r \ \binom{s}{t} \binom{n-s}{r-t} f^{\lambda / \mu}=
		\binom{n}{r}f^{\lambda / \mu}=
		\dim (\R(n)^{\lambda / \mu}).
	\end{align*} From this we conclude that the surjective linear map in (\ref{bar2}) is an isomorphism. This means that the map in (\ref{bar1}) is an isomorphism for every $\nu$. \end{proof}

For $s=n-1$ in Theorem \ref{br2} we obtain the following corollary. 
\begin{corollary}[Branching rule for $\Mo(n-1) \subseteq \Mo(n)$]\label{br3} Let $\Mo(n)$ be one of the monoids $\IS_n, \PT_n$ or $\T_n$ and let $\lambda / \mu$ be a skew partition of $r$. Let $P(\lambda, \mu)$ be the set of partitions $\nu$ such that $\mu \subseteq \nu \subseteq \lambda$ and $|\nu|=|\lambda|-1$.
	\begin{enumerate}[leftmargin=*]
		\item[\textup{(1)}] Suppose $r=n$. Then as an $\Mo(n-1)$-module, $\R(n)^{\lambda / \mu}$ (respectively $\R(n)_{\lambda / \mu}$) has a  filtration with factors $\R(n-1)^{\nu / \mu}$ (respectively $\R(n-1)_{\nu / \mu}$) each appearing exactly once, where ${\nu}$ ranges over $P(\lambda,\mu)$.
		\item[\textup{(2)}] Suppose $r<n$. Then as an $\Mo(n-1)$-module, $\R(n)^{\lambda / \mu}$ (respectively $\R(n)_{\lambda / \mu}$) has a  filtration with factors $\R(n-1)^{\nu / \mu}$ (respectively $\R(n-1)_{\nu / \mu}$)  each appearing exactly once, where ${\nu}$ ranges over the set $P(\lambda,\mu) \cup \{\lambda\}$.
	\end{enumerate}
\end{corollary}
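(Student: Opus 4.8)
The plan is to obtain Corollary \ref{br3} as the special case $s = n-1$ of Theorem \ref{br2}, so that $n - s = 1$. First I would observe that, with this choice, the block embedding $\Mo(n-1) \times \Mo(1) \to \Mo(n)$ of Theorem \ref{br2} restricts on the first factor (sending $A$ to $(A, e)$, where $e$ is the identity of $\Mo(1)$) precisely to the inclusion $\Mo(n-1) \hookrightarrow \Mo(n)$ appearing in the statement of the Corollary. Hence applying $\mathrm{Res}$ along this inclusion to the $\Mo(n-1) \times \Mo(1)$-filtration of $\R(n)^{\lambda / \mu}$ produced by Theorem \ref{br2} yields an $\Mo(n-1)$-filtration whose factors are the restrictions to $\Mo(n-1)$ of $\R(n-1)^{\nu / \mu} \otimes \R(1)^{\lambda / \nu}$.

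Next I would identify these factors. The index $\nu$ runs over partitions with $\mu \subseteq \nu \subseteq \lambda$, $|\nu / \mu| \le n-1$ and $|\lambda / \nu| \le 1$. The last condition forces $\lambda / \nu$ to be either empty (so $\nu = \lambda$) or a single box (so $|\nu| = |\lambda| - 1$, that is $\nu \in P(\lambda, \mu)$). In either case $\R(1)^{\lambda / \nu}$ is one-dimensional: by Proposition \ref{bm} its dimension is $\binom{1}{|\lambda / \nu|} \dim S^{\lambda / \nu}$, which equals $1$ when $|\lambda / \nu| \in \{0, 1\}$. Since the identity of $\Mo(1)$ acts as the identity on this one-dimensional space, the restriction to $\Mo(n-1)$ of $\R(n-1)^{\nu / \mu} \otimes \R(1)^{\lambda / \nu}$ is isomorphic to $\R(n-1)^{\nu / \mu}$. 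The same reasoning, with $\R(1)_{\lambda / \nu}$ in place of $\R(1)^{\lambda / \nu}$, handles the dual version $\R(n)_{\lambda / \mu}$.

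It then remains to bookkeep the range of $\nu$ in the two cases. If $r = n$, then $|\nu / \mu| + |\lambda / \nu| = |\lambda / \mu| = n$, so $|\lambda / \nu| \le 1$ together with $|\nu / \mu| \le n-1$ forces $|\lambda / \nu| = 1$; thus $\nu$ ranges exactly over $P(\lambda, \mu)$, giving part (1). If $r < n$, then $|\nu / \mu| \le |\lambda / \mu| = r \le n-1$ automatically, so the only surviving constraint is $|\lambda / \nu| \le 1$, and $\nu$ ranges over $P(\lambda, \mu) \cup \{\lambda\}$, giving part (2).

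I do not anticipate a real obstacle, since once Theorem \ref{br2} is available the Corollary is essentially a matter of reading off the $s = n-1$ specialization. The only points requiring care are the identification of the degree-one monoid $\Mo(1)$ (in particular that its identity acts trivially on the tensor factors, so that the degree-one factor contributes nothing beyond a scalar) and the verification that the first-factor inclusion coming from the block embedding coincides with the inclusion $\Mo(n-1) \hookrightarrow \Mo(n)$ used in the statement.
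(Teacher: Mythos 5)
Your proof is correct and is exactly the specialization the paper has in mind: the line preceding the Corollary says "For $r=n-1$ in Theorem \ref{br2} we obtain the following corollary," which is clearly a typo for "$s=n-1$," and your detailed case analysis (identity of $\Mo(1)$ acting trivially, $\R(1)^{\lambda/\nu}$ being one-dimensional for $|\lambda/\nu|\le 1$, and the constraint $|\nu/\mu|\le n-1$ eliminating $\nu=\lambda$ precisely when $r=n$) fills in all the bookkeeping the paper leaves implicit.
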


\begin{remarks} (1) As mentioned in the Introduction, Theorem \ref{br2} generalizes to $\IS_n, \PT_n$ and $\T_n$ the characteristic-free branching rule for Specht modules of James and Peel \cite[3.1 Theorem]{JaPe} associated with the inclusion $\mathfrak{S}_s \times \mathfrak{S}_{n-s} \to \mathfrak{S}_n$. \\(2) For a modular branching rule concerning the irreducibles in positive characteristic of the generalized rook monoid algebras see \cite[Section 3.2]{MaSr}.
\end{remarks}

\section{Cauchy decompositions and orbit harmonics quotients}\label{mar}
As mentioned in items (iii) and (iv) of the Introduction, the purpose of this section is to establish `Cauchy decompositions'  (a) for certain algebras associated to the monoids $\IS_n, \PT_n, \T_n$ (see Theorem \ref{m2}) and (b) for the orbit harmonics quotients of these monoids (see Corollary \ref{m3}).

From Section 4 we will need only Theorem \ref{bm}.

In Section 5.1 we introduce a family of modules that will be used later. In Section 5.2 we compute dimensions of the graded components of the quotient rings whose module structure is studied in Section 5.3. In Section 5.4 we study orbit harmonics quotients.

\subsection{The modules $L_\lambda(S^{(m-1,1)})$}\label{LS} The goal of this subsection is to introduce a family of modules $L_\lambda(S^{(m-1,1)})$ for the symmetric group $\mathfrak{S}_m$ that feature in a Cauchy decomposition associated to the full transformation monoid $\T_m$, see Theorem \ref{m2}(3). We will use the presentation of these modules given in Lemma \ref{presT} below.

Recall from Section \ref{Sm} that for any finite dimensional $\Bbbk$-vector space $V$ and for any partition $\lambda$ we have the Schur module $L_\lambda(V)$. Here, we will take $V$ to be the standard $\mathfrak{S}_m$-module which we denote by $U_m$. To be precise, let $V_m$ be the natural 	$\mathfrak{S}_m$-module. We have a basis $\{e_1, \dots, e_m\}$ of $V_m$ and the action of $\mathfrak{S}_m$ is defined by $\sigma e_i = e_{\sigma (i)}$, where $\sigma \in \mathfrak{S}_m$. The subspace of $V_m$ spanned by the sum $e_1+\dots + e_m$ is an 	$\mathfrak{S}_m$-submodule of $V_m$. The quotient module \[U_m:=V_m / \langle e_1+\cdots+e_m \rangle\] is called the standard module of $\mathfrak{S}_m$. It is well known that $U_m$ is isomorphic to the Specht module $S^{(m-1,1)}$ corresponding to the partition $(m-1,1)$ of $m$. We will not need this remark.

For every $k \ge 0$, the natural surjection $\pi: V_m \to U_m$ gives a surjection of $\mathfrak{S}_m$-modules \begin{equation}\label{pik}\pi_k: \Lambda^k(V_m) \to \Lambda^k(U_m), v_1 \cdots v_k \mapsto \pi(v_1) \cdots \pi(v_k), \ v_i \in V_m.\end{equation} Define $\textbf{e}:=e_1 +\cdots+ e_m.$ Since $\textbf{e} \in \ker\pi$, we have \begin{equation}\label{kerpi}v_1  \cdots v_k \in \ker \pi_k \end{equation} if $v_j=\textbf{e}$ for some $j \in \{1,\dots, k\}$.

We have that the Schur module $L_\lambda (V_m)$ is an $\mathfrak{S}_m$-module for any partition $\lambda$. We now define an $\mathfrak{S}_m$-submodule of $L_\lambda (V_m)$. We adopt the notation of Section \ref{Sm} and in particular recall we have a map $d_{\lambda} (V_m):\La^{\lambda}(V_m) \to Sym_{\lambda' }(V_m)$ whose image is $L_\lambda(V_m)$. In order to have more compact notation, for a tableau $T \in \tab_\lambda([m])$ let us denote the element $d_\lambda(V_m)(X_T)$ of $L_\lambda(V_m)$ by $[T]$.

\begin{definition}\label{W} Let $\lambda$ be a partition. \begin{enumerate}[leftmargin=*]
		\item For a tableau $T \in \tab_\lambda([m])$ and an element $u \in [m]$, let $T_{i,j}[u] \in \tab_\lambda([m])$ be the tableau obtained from $T$ by replacing the $(i,j)$ entry by $u$.
		\item Let $W_\lambda$ be the subspace of $L_\lambda (V_m)$ generated by the elements $\sum_{u=1}^m[T_{i,j}[u]]$ for all $1 \le i \le \ell (\lambda) $ and $1 \le j \le \lambda_i$ and $T \in \tab_\lambda([m])$.
\end{enumerate}\end{definition}
It follows that $W_\lambda$ is an $\mathfrak{S}_m$-submodule of $L_\lambda (V_m)$.
\begin{example} Let $\lambda=(r)$ and $i=1, j=r$. If the entries of $T$ are $a_{1}, \dots, a_{r-1}, a_{r}$, then \[\sum_{u=1}^m[T_{i,j}[u]]=e_{a_1}\cdots e_{a_{r-1}}(\sum_{u=1}^me_u)=e_{a_1}\cdots e_{a_{r-1}}\textbf{e} \in \Lambda^r(V_m).\]\end{example}
\begin{lemma}\label{presT} Let $\lambda$ be a partition. Then the $\mathfrak{S}_m$-modules $L_\lambda (V_m) / W_\lambda$ and $L_\lambda (U_m) $ are isomorphic.\end{lemma}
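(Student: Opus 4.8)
The plan is to construct the isomorphism $L_\lambda(V_m)/W_\lambda \cong L_\lambda(U_m)$ by producing a surjection $L_\lambda(V_m) \to L_\lambda(U_m)$ whose kernel is exactly $W_\lambda$. First I would assemble the surjection. The maps $\pi_k \colon \Lambda^k(V_m) \to \Lambda^k(U_m)$ of \eqref{pik} are $\mathfrak{S}_m$-equivariant and compatible with the comultiplications and multiplications used to define $d_\lambda$, so they induce a surjective $\mathfrak{S}_m$-map $\Pi_\lambda \colon \Lambda^\lambda(V_m) \to \Lambda^\lambda(U_m)$, and since $\pi$ is functorial, $\Pi_\lambda$ carries the defining relations of $L_\lambda(V_m)$ (the submodule generated by the box relations, i.e. the kernel of $d_\lambda(V_m)$) into the corresponding relations for $L_\lambda(U_m)$. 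Hence $\Pi_\lambda$ descends to a surjection $\bar\pi_\lambda \colon L_\lambda(V_m) \to L_\lambda(U_m)$ sending $[T] = d_\lambda(V_m)(X_T)$ to $d_\lambda(U_m)(X_{\pi(T)})$, where $\pi(T)$ is obtained by applying $\pi$ entrywise. (Strictly, one should phrase this using the universal/functorial properties of the Schur functor in \cite[Definition II.1.3]{ABW}, or, more concretely, observe that $L_\lambda(-)$ is a functor on vector spaces and $\pi$ is a surjection, so $L_\lambda(\pi)$ is surjective.)

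Next I would show $W_\lambda \subseteq \ker\bar\pi_\lambda$. For a generator $\sum_{u=1}^m [T_{i,j}[u]]$ of $W_\lambda$, its image under $\bar\pi_\lambda$ is $d_\lambda(U_m)$ applied to $\sum_u X_{T_{i,j}[u]}$; but in $\Lambda^\lambda(V_m)$ one has $\sum_{u=1}^m X_{T_{i,j}[u]} = X_T'$ where $X_T'$ is the element of $\Lambda^\lambda(V_m)$ in which the factor $e_{T(i,j)}$ in the $i$th tensor slot has been replaced by $\mathbf{e} = e_1 + \cdots + e_m$. Since $\mathbf{e} \in \ker\pi$, equation \eqref{kerpi} gives $\Pi_\lambda(X_T') = 0$, hence $\bar\pi_\lambda(\sum_u [T_{i,j}[u]]) = 0$. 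Therefore $\bar\pi_\lambda$ factors through a surjection $\overline{\overline{\pi}}_\lambda \colon L_\lambda(V_m)/W_\lambda \to L_\lambda(U_m)$.

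It remains to check injectivity, which I expect to be the main obstacle since it requires a genuine dimension/basis count rather than formalities. I would argue by comparing dimensions via the basis theorem. By Theorem \ref{Bthm}(1), $L_\lambda(U_m)$ has a basis indexed by semistandard tableaux of shape $\lambda$ filled from a basis of $U_m$, which has dimension $m-1$; so $\dim L_\lambda(U_m) = |\sst_\lambda([m-1])|$. On the other hand I would exhibit a spanning set of $L_\lambda(V_m)/W_\lambda$ of the same cardinality. Using the straightening law (Proposition \ref{strl}) we may assume the images $[T] + W_\lambda$ with $T$ semistandard span; and using the $W_\lambda$-relations (which, modulo $W_\lambda$, let us replace any occurrence of the entry $m$ in a fixed box by $-\sum_{u<m}(\cdots)$, i.e. "eliminate the largest letter") one reduces to semistandard $T$ with entries in $[m-1]$ only. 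This yields a spanning set of size $|\sst_\lambda([m-1])| = \dim L_\lambda(U_m)$, so the surjection $\overline{\overline{\pi}}_\lambda$ is an isomorphism. The delicate point in this last step is verifying that the elimination procedure is well-defined and terminates, i.e. that reducing one tableau modulo $W_\lambda$ and then re-straightening does not reintroduce the letter $m$ in a way that loops; I would handle this by ordering tableaux (e.g. by the position of the last box containing $m$, or by a weight/dominance order on the content) and checking the reduction strictly decreases this order, so that the semistandard tableaux on $[m-1]$ indeed span the quotient.
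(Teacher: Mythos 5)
Your proposal follows essentially the same strategy as the paper: use functoriality of $L_\lambda(-)$ to get the surjection $L_\lambda(\pi)\colon L_\lambda(V_m)\to L_\lambda(U_m)$, check that each generator of $W_\lambda$ is sent to an expression containing a tensor slot with $\mathbf{e}$ and hence dies by \eqref{kerpi}, and then complete the proof by bounding $\dim L_\lambda(V_m)/W_\lambda$ above by $|\mathrm{SST}_\lambda([m-1])| = \dim L_\lambda(U_m)$ via a spanning argument. The only substantive difference is the order of operations in that spanning argument, and it is worth noting that the paper's ordering sidesteps the termination worry you raise. The paper first uses the $W_\lambda$-relations to rewrite an arbitrary $[T]+W_\lambda$ as a combination of $[S]+W_\lambda$ with the weight of $S$ satisfying $\alpha_m=0$ (this process trivially terminates, since each application strictly decreases the number of $m$'s and straightening is not yet invoked), and only afterwards applies Proposition \ref{strl} once; since straightening preserves the weight, no $m$'s can reappear. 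Your order (straighten to semistandard first, then eliminate $m$, then re-straighten) does work, and the invariant that makes your reduction terminate is exactly the one you gesture at: the weight coordinate $\alpha_m$, which straightening preserves and each $W_\lambda$-elimination strictly decreases. Your alternative suggestion of ordering by the position of the last box containing $m$ is less robust, because straightening can move that position unpredictably. So the proposal is correct in substance, but should replace the vague "I would handle this by ordering tableaux" with the concrete monovariant $\alpha_m$ (or, simpler still, adopt the paper's eliminate-then-straighten order).
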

\begin{proof} The natural projection $\pi: V_m \to U_m$ gives a surjective map of $\mathfrak{S}_m$-modules $L_\lambda(\pi): L_\lambda (V_m) \to L_\lambda (U_m)$ such that \[d_{\lambda}(V_m)(x_1 \otimes \cdots \otimes x_{\ell(\lambda)}) \mapsto d_{\lambda}(U_m)(\pi_{\lambda_1}(x_1) \otimes \cdots \otimes \pi_{\lambda_{\ell(\lambda)}}(x_{\ell(\lambda)})), \] where $x_i \in \Lambda^{{\lambda_i}}(V_m)$ and the maps $\pi_k$ were defined in (\ref{pik}). We claim that $W_{\lambda} \subseteq \ker L_\lambda(\pi)$. Indeed, let $T \in \tab_\lambda([m])$ and let the entry of $T$ in position $(s,t)$ be $T(s,t)$. Then for all $1 \le i \le \ell (\lambda) $ and $1 \le j \le \lambda_i$ we have by Definition \ref{W}(2),
	\begin{align*}\sum_{u=1}^{m}[T_{i,j}[u]]= d_\lambda (V_m)&\big(e_{T(1,1)}\cdots e_{T(1,\lambda_1)}\otimes \cdots \otimes e_{T(i,1)}\cdots e_{T(i, j-1)}\textbf{e}e_{T(i,j+1)}\cdots \\&e_{T(i, \lambda_i)}\otimes \cdots \otimes e_{T(\ell(\lambda),1)} \cdots  e_{T(\ell(\lambda),\lambda_{\ell (\lambda)})} \big)\end{align*}
	and thus the image of this element under the map $L_\lambda(\pi)$ is equal to 	\begin{align*}d_\lambda (U_m)&\big(\pi_{\lambda_1}(e_{T(1,1)}\cdots e_{T(1,\lambda_1)})\otimes \cdots \otimes \pi_{\lambda_i}(e_{T(i,1)}\cdots e_{T(i, j-1)}\textbf{e}e_{T(i,j+1)}\cdots e_{T(i, \lambda_i)})\\&\otimes \cdots \otimes \pi_{\lambda_{\ell(\lambda)}}(e_{T(\ell(\lambda),1)} \cdots  e_{T(\ell(\lambda),\lambda_{\ell (\lambda)})}) \big).\end{align*}
	But this is equal to $0$ since 	$\pi_{\lambda_i}(e_{T(i,1)}\cdots e_{T(i, j-1)}\textbf{e}e_{T(i,j+1)}\cdots e_{T(i, \lambda_i)})=0$ according to (\ref{kerpi}).
	
	Hence $W_\lambda \subseteq \ker L_\lambda(\pi)$ and we have \begin{equation}\label{d1}\dim L_\lambda (V_m) / W_\lambda \ge \dim L_\lambda (U_m).\end{equation}
	
	We claim that the vector space $L_\lambda (V_m) / W_\lambda$ is generated by the elements $[S]+W_\lambda$, where $S$ runs over the semistandard tableaux of shape $\lambda$ whose weight $(\alpha_1, \dots, \alpha_m)$ satisfies $\alpha_m = 0$. Indeed, let $T \in \tab_\lambda$ be a tableau that in position $(i,j)$ has the entry $m$ for some $(i,j)$. Then from the definition of $W_\lambda$ we have \begin{equation}\label{d2}[T]+W_\lambda =-\sum_{u=1}^{m-1}[T_{i,j}[u]]+W_\lambda.\end{equation}
	Applying eq. (\ref{d2}) several times if needed, we see that $[T] + W_\lambda$ is a linear combination of various $[S] + W_\lambda$, where $S \in \tab_\lambda$ has weight  $(\alpha_1, \dots, \alpha_m)$ satisfying $\alpha_m = 0$. By the straightening law, Proposition \ref{strl}, each $[S] \in L_\lambda (V_m)$ can be expressed as a linear combination of $[S_v]$, where $S_v \in \tab_\lambda$ is semistandard and has weight equal to the weight of $S$. This proves the claim.
	
	From the claim we have that the dimension $\dim L_\lambda (V_m) / W_\lambda$ is less than or equal to the number of semistandard tableaux in $\tab_\lambda$ with entries from $[m-1]$. In other words, by Theorem \ref{Bthm} we have \begin{equation}\label{d3}\dim L_\lambda (V_m) / W_\lambda \le \dim L_\lambda (U_m).\end{equation} From (\ref{d1}) and (\ref{d3}) we have equality $\dim L_\lambda (V_m) / W_\lambda = \dim L_{\lambda}(U_m)$ of finite dimensions and thus the map $L_\lambda(\pi): L_\lambda (V_m) \to L_\lambda (U_m)$ induces an isomorphism $ L_\lambda (V_m) / W_\lambda \cong L_\lambda (U_m)$.\end{proof}

\subsection{The ideals $J_{m,n}(Z)$ for $Z \in \{ \IS, \PT, \T \} $}

Let $\textbf{x}_{m,n}$ be an $m \times n$ matrix of commuting variables $(x_{i,j})$, where $1 \le i \le m$ and $1 \le j \le n$, and consider the polynomial ring $\Bbbk[\textbf{x}_{m \times n}]$.
\begin{definition}\label{Iideals}Define the following ideals of $\Bbbk[\textbf{x}_{m \times n}]$.\begin{enumerate}[leftmargin=*]
		\item[\textup{(1)}] Let $J_{m,n}(\IS)$ be the ideal generated by 
		\begin{itemize}
			\item 	any product $x_{i,j}x_{i',j}$ of variables in the same column, $1 \le i,i' \le m$ and $1 \le j \le n$ and
			\item any product $x_{i,j}x_{i,j'}$ of variables in the same row, $1 \le i \le m$ and $1 \le j, j' \le n$.
		\end{itemize}
		\item[\textup{(2)}] Let $J_{m, n}(\PT)$ be the ideal  generated by 
		\begin{itemize}
			\item any product $x_{i,j}x_{i',j}$ of variables in the same column, $1 \le i,i' \le m$ and $1 \le j \le n$.
		\end{itemize}
		\item[\textup{(3)}] Let $J_{m,n}(\T)$ be the ideal  generated by 
		\begin{itemize}
			\item any product $x_{i,j}x_{i',j}$ of variables in the same column, $1 \le i,i' \le m$ and $1 \le j \le n$ and 
			\item the sum $x_{1,j} + x_{2,j}+ \cdots +x_{m,j}$ of all the variables in the same column, $1 \le j \le n$.
		\end{itemize}
		\item[\textup{(4)}] When $m=n$ we denote $J_{n,n}(Z)$ by $J_{n}(Z)$, where $Z \in \{ \IS, \PT, \T \} $.
\end{enumerate}\end{definition}

We have the grading $\Bbbk[\textbf{x}_{m \times n}] = \bigoplus_{r \ge0}\Bbbk[\textbf{x}_{m \times n}]_r$, where  $\Bbbk[\textbf{x}_{m \times n}]_r$ is the subspace of $\Bbbk[\textbf{x}_{m \times n}]$ generated by the homogeneous polynomials of degree $r$. If $I$ is a homogeneous ideal of $\Bbbk[\textbf{x}_{m \times n}]$ we obtain the grading $\Bbbk[\textbf{x}_{m \times n}]/ I= \bigoplus_{r\ge 0} \big(\Bbbk[\textbf{x}_{m \times n}]/ I\big)_r$,  where \[\big(\Bbbk[\textbf{x}_{m \times n}]/ I\big)_r:=\Bbbk[\textbf{x}_{m \times n}]_r/ (\Bbbk[\textbf{x}_{m \times n}]_r \cap I).\] Since the ideals $J_{m, n}(Z)$, where $Z \in \{\PT, \IS, \T\}$, of $\Bbbk[\textbf{x}_{m \times n}]$ are   homogeneous, we have the corresponding grading $\Bbbk[\textbf{x}_{m \times n}]/ J_{m,n}(Z)= \bigoplus_{r\ge 0} \big(\Bbbk[\textbf{x}_{m \times n}]/ J_{m,n}(Z)\big)_r$.

Recall that our convention for binomial coefficients is $\binom{a}{b}=0$ if $b >a$.
\begin{lemma}\label{dim1} We have the following equalities. \begin{enumerate}[leftmargin=*]
		\item[\textup{(1)}]$\dim \big(\Bbbk[\textbf{x}_{m \times n}]/ J_{m, n}(\IS)\big)_r =  \binom{m}{r}\binom{n}{r}r!$.
		\item[\textup{(2)}]$\dim \big(\Bbbk[\textbf{x}_{m \times n}]/ J_{m, n}(\PT)\big)_r =\binom{n}{r}m^r$.
			\item[\textup{(3)}]$\dim \big(\Bbbk[\textbf{x}_{m \times n}]/ J_{m,n}(\T)\big)_r =\binom{n}{r}(m-1)^r.$
			\item[\textup{(4)}]$\dim \big(\Bbbk[\textbf{x}_{n \times n}]/ J_{n}(\PT)\big) = (n+1)^n.$
			\item[\textup{(5)}]$\dim \big(\Bbbk[\textbf{x}_{n \times n}]/ J_{n}(\T)\big) = n^n.$
	\end{enumerate}
\end{lemma}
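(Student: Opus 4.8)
The strategy is to produce a basis for each graded quotient by finding a convenient set of monomials (or monomial-like classes) that spans, and then matching its cardinality against the claimed dimension. The key observation is that the generators of each ideal $J_{m,n}(Z)$ are themselves monomials (for $\IS$ and $\PT$) or a monomial plus one linear form (for $\T$), so the quotient has an explicit monomial-theoretic description.

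\textbf{Part (1).} In the quotient by $J_{m,n}(\IS)$, a degree-$r$ monomial $\prod x_{i_k,j_k}$ survives if and only if no two of its variables lie in a common row or column; thus a spanning set for $(\Bbbk[\textbf{x}_{m\times n}]/J_{m,n}(\IS))_r$ is the set of squarefree monomials whose support is a \emph{partial permutation matrix} of rank $r$, i.e.\ a placement of $r$ non-attacking rooks on an $m\times n$ board. Since the relations are monomial, distinct such monomials are linearly independent in the quotient (the ideal is spanned by monomials, so the quotient has a monomial basis consisting exactly of the monomials not in the ideal). The number of rank-$r$ partial permutation matrices is $\binom{m}{r}\binom{n}{r}r!$ — choose $r$ rows, $r$ columns, and a bijection between them — giving (1). (Note this agrees with Proposition \ref{card} upon summing over $r$ when $m=n$.)

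\textbf{Parts (2)--(5).} For $J_{m,n}(\PT)$ the same reasoning applies with only the column relations: a degree-$r$ monomial survives iff it has at most one variable in each column, so a basis is indexed by a choice of $r$ columns $\binom{n}{r}$ together with, independently, a choice of row $1,\dots,m$ for the variable in each chosen column, $m^r$ in all; this gives (2), and (4) follows by summing $\sum_{r=0}^{n}\binom{n}{r}m^r=(m+1)^n$ and specializing $m=n$ (matching $|\PT_n|=(n+1)^n$ from Proposition \ref{card}). For $J_{m,n}(\T)$ we still have the column-monomial relations, and in addition in each used column $j$ the linear relation $x_{1,j}+\cdots+x_{m,j}=0$ lets us eliminate one row, say row $m$; so after reduction a basis is indexed by $r$ chosen columns and an assignment of a row from $\{1,\dots,m-1\}$ to each, giving $\binom{n}{r}(m-1)^r$ and hence (3); summing $\sum_r\binom{n}{r}(n-1)^r=n^n$ at $m=n$ yields (5), matching $|\T_n|=n^n$. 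The one point requiring a little care is that for $\T$ the ideal is no longer monomial, so the clean "monomials not in the ideal form a basis" argument must be replaced by: pass to a monomial order (or simply use the column-wise linear change of variables replacing $x_{m,j}$ by $-\sum_{i<m}x_{i,j}$) under which $J_{m,n}(\T)$ becomes generated by the column-square monomials together with the variables $x_{m,j}$; then the monomial argument applies verbatim in the new variables.

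\textbf{Main obstacle.} The genuinely non-routine point is establishing the \emph{lower} bounds on dimensions, i.e.\ linear independence of the proposed spanning sets in the quotient. For $\IS$ and $\PT$ this is immediate because the ideal is spanned by monomials and hence the complementary monomials are a $\Bbbk$-basis of the quotient. For $\T$ one must verify that after the linear substitution the ideal really is generated by monomials (the $x_{m,j}$ being themselves monomials in the new coordinates), so that the same principle applies; this is the step I would write out carefully. Everything else is the combinatorial bookkeeping of counting non-attacking rook placements and column-wise row assignments, together with the Chu--Vandermonde-type sums $\sum_r\binom{n}{r}t^r=(t+1)^n$ used to derive (4) and (5) from (2) and (3).
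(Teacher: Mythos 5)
Your proof is correct and follows essentially the same approach as the paper: for $\IS$ and $\PT$ the ideals are monomial, so the monomials not in the ideal (non-attacking rook placements, resp.\ column-injective assignments) form a basis, the counts are immediate, and parts (4)--(5) follow by the binomial theorem. For $\T$ the paper sets up the surjection $\Bbbk[\textbf{x}_{m\times n}]/J_{m,n}(\T)\twoheadrightarrow\Bbbk[\textbf{x}_{(m-1)\times n}]/J_{m-1,n}(\PT)$ induced by $x_{m,j}\mapsto-\sum_{i<m}x_{i,j}$ and pairs it with the obvious spanning set to squeeze the dimension, which is the same observation as your linear change of variables rendering $J_{m,n}(\T)$ monomial --- just packaged as an inequality sandwich rather than an explicit monomial basis in the new coordinates.
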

\begin{proof} (1) We consider eq. (1). From $x_{i,j}x_{i,j'} \in J_{m, n}(\IS)$ it follows that\\ $ x_{i_1,j_1}x_{i_2,j_2} \cdots x_{i_r,j_r} \in J_{m, n}(\IS)$ if $r>m$, since two indices among $i_1, i_2, \dots, i_r \in [m]$ must be equal by the pigeonhole principle. Likewise, from $x_{i,j}x_{i',j} \in J_{m, n}(\IS)$ it follows that $ x_{i_1,j_1}x_{i_2,j_2} \cdots x_{i_r,j_r} \in J_{m, n}(\IS)$ if $r>n$. Suppose $r \le \min\{m,n\}$. Then from the definition of $J_{m, n}(\IS)$ we have that  $\big(\Bbbk[\textbf{x}_{m \times n}]/ J_{m, n}(\IS)\big)_r$ is generated as a vector space by the elements \begin{equation}\label{gen11}x_{i_1,j_1}x_{i_2,j_2} \cdots x_{i_r,j_r} + J_{m, n}(\IS),
	\end{equation}
where $1 \le i_1 < i_2 < \cdots < i_r \le m$, and $j_1, j_2, \dots, j_r \in [n]$ are distinct.

	It is straightforward to verify that the elements in (\ref{gen11}) are linearly independent and hence form a basis of $\big(\Bbbk[\textbf{x}_{m \times n}]/ J_{m, n}(\IS)\big)_r$. We have $\tbinom{m}{r}$ choices for the $i_1, i_2,  \cdots, i_r$ and $\tbinom{n}{r}r!$ choices for the $j_1, j_2, \dots, j_r$. Thus eq. (1) follows.
	
	(2) The proof of (2) is similar: One shows that if $r\le n$, then a basis of  $\big(\Bbbk[\textbf{x}_{m \times n}]/ J_{m, n}(\PT)\big)_r$ is  given by \begin{equation}\label{genJ}x_{i_1,j_1}x_{i_2,j_2} \cdots x_{i_r,j_r} + J_{m,n}(\PT),
	\end{equation}
	where $i_1, i_2, \dots, i_r \in [m]$, and
		$1\le j_1<j_2< \dots< j_r \le n$.

(3) Equation (4) follows from (2) and the binomial expansion of $(n+1)^n$. Likewise,  (5) follows from (3).

(4) Finally we prove (3). Consider the $(m-1) \times n$ matrix $\textbf{x}_{(m-1)\times n} =(x_{i,j})$, where $1 \le i \le m-1$ and $1 \le j \le n$. We have the surjective map of graded $\Bbbk$-algebras $\Bbbk[\textbf{x}_{m \times n}] \to \Bbbk[\textbf{x}_{(m-1) \times n}]$ such that \begin{center}
	$x_{i,j} \mapsto x_{i,j}$ if $i<m$ and $x_{m,j} \mapsto -(x_{1,j}+x_{2,j}+\dots +x_{m-1,j})$.
\end{center} This induces a surjective map of graded $\Bbbk$-algebras \begin{equation}\label{map1}
\Bbbk[\textbf{x}_{m \times n}] / J_{m,n}(\T) \to \Bbbk[\textbf{x}_{(m-1) \times n}] / J_{m-1,n}(\PT)
\end{equation}
since for $1 \le i \le m-1$ we have \begin{align*}x_{i,j}x_{m,j} \mapsto x_{i,j}(-\sum_{t=1}^{m-1}x_{t,j})=-\sum_{t=1}^{m-1}x_{i,j}x_{t,j} \in J_{m-1,n}(\PT) \end{align*}
and likewise for $i=m$ we have 
\begin{align*}x_{m,j}^2\mapsto (-\sum_{t=1}^{m-1}x_{t,j})^2=\sum_{s, t=1}^{m-1}x_{s,j}x_{t,j} \in J_{m-1,n}(\PT). \end{align*} Thus for all $r$ we have \begin{equation}\label{dimin}\dim \big(\Bbbk[\textbf{x}_{m \times n}]/ J_{m,n}(\T)\big)_r  \ge \dim \big(\Bbbk[\textbf{x}_{m-1 \times n}]/ J_{m-1, n}(\PT)\big)_r.\end{equation} On the other hand, it follows from the definition of the ideal $J_{m,n}(\T)$ that the vector space   $\big(\Bbbk[\textbf{x}_{m \times n}]/ J_{m,n}(\T)\big)_r$ is  generated by \begin{equation}\label{gen4}x_{i_1,j_1}x_{i_2,j_2} \cdots x_{i_r,j_r} + J_{m,n}(\T),
\end{equation}
where $i_1, i_2, \dots, i_r \in [m-1]$ and $1\le j_1<j_2< \dots< j_r \le n$.
 Comparing this with (\ref{genJ}) (for $m-1$ in place of $m$), we see that \begin{equation}\label{dimin2}\dim \big(\Bbbk[\textbf{x}_{m \times n}]/ J_{m,n}(\T)\big)_r  \le \dim \big(\Bbbk[\textbf{x}_{m-1 \times n}]/ J_{m-1, n}(\PT)\big)_r.\end{equation} From (\ref{dimin}) we obtain equality in (\ref{dimin2}) and hence (3) follows from (2).
\end{proof}
\subsection{Cauchy decompositions} We consider the polynomial ring $\Bbbk[\textbf{x}_{m \times n}]$ as an $\M_m(\Bbbk) \times \M_n(\Bbbk)$-module by defining \begin{equation}\label{Iact}Ax_{i,j}:=\sum_{k=1}^m a_{k,i}x_{k,j}, \   Bx_{i,j}:=\sum_{k=1}^n b_{k,j}x_{i,k},\end{equation}
for $A=(a_{s,t}) \in \M_m(\Bbbk)$ and $B=(b_{s,t}) \in \M_n(\Bbbk)$, and extending this action multiplicatively and linearly.

Recall from Section \ref{pm} that if $A$ is a matrix in $ \PT_m$, then we have the associated partial map $p_A: [m] \to [m]$. The restriction of the above action to $\PT_m \times \PT_n$ is given as follows. For $(A, B) \in  \PT_m \times \PT_n$ and $1 \le i \le m$, $1 \le j \le n$ we have \[(A,B)x_{i,j} = \begin{cases}x_{p_A(i),p_B(j)}, &i \in \dom(p_A), \ j \in \dom(p_B ),\\
0, & otherwise.  \end{cases}\] For a monomial $x_{i_1, j_1} \cdots x_{i_r, j_r}$ in $\Bbbk[\textbf{x}_{m \times n}]$, we have \begin{equation}\label{act}(A,B)x_{i_1, j_1} \cdots x_{i_r, j_r}=\begin{cases}x_{p_A(i_1), p_B(j_1)} \cdots x_{p_A(i_r), p_B(j_r)}, &i_1, \dots, i_r \in \dom(p_A), \ j_1,\dots, j_r \in \dom(p_B ),\\
	0, & otherwise.\end{cases}\end{equation} 

It is straightforward to verify that, with respect to the above actions, the ideal \begin{itemize}\item $J_{m,n}(\IS)$ is an $\IS_m \times \IS_n$-submodule of $\Bbbk[\textbf{x}_{m \times n}]$,
\item $J_{m,n}(\PT)$ is a $\GL_m(\Bbbk) \times \PT_n$-submodule of $\Bbbk[\textbf{x}_{m \times n}]$, and 
\item $J_{m,n}(\T)$ is an $\mathfrak{S}_m \times \T_n$-submodule of $\Bbbk[\textbf{x}_{m \times n}]$. 
\end{itemize}

\begin{remark}We note that all of the above ideals are $\PT_n$-submodules of $\Bbbk[\textbf{x}_{m \times n}]$.\end{remark}

The graded structure of the corresponding quotient rings is given in Theorem \ref{m2} below, which we regard as the main result of the paper together with Corollary \ref{m3}.
\begin{theorem}\label{m2} Suppose $m,n,r$ are positive integers.
	\begin{enumerate}[leftmargin=*]
		\item[\textup{(1)}] The $\IS_m \times \IS_n$-module $\big(\Bbbk[\textbf{x}_{m \times n}] / J_{m,n}(\IS)\big)_r$ has a filtration with quotients \[{\R(m)}^{\lambda} \otimes {\R(n)}^\lambda, \ \lambda \vdash r, \] each appearing exactly once.
		\item[\textup{(2)}] The $\GL_m(\Bbbk) \times \PT_n $-module $\big(\Bbbk[\textbf{x}_{m \times n}] / J_{m,n}(\PT)\big)_r$ has a filtration with quotients \[L_{\lambda'}(V_m) \otimes {\R(n)}^\lambda, \ \lambda \vdash r,\] each appearing exactly once.
			\item[\textup{(3)}] The $\mathfrak{S}_m \times \T_n $-module, $\big(\Bbbk[\textbf{x}_{m \times n}] / J_{m,n}(\T)\big)_r$ has a filtration with quotients \[L_{\lambda'}(U_m) \otimes {\R(n)}^\lambda, \ \lambda \vdash r,\] each appearing exactly once.
	\end{enumerate}\end{theorem}
Before we give the proof of Theorem \ref{m2}, we make some remarks and recall the Cauchy decomposition of the polynomial ring.
\begin{remarks} (1) If $r>n$ in any of the cases of the above theorem, then $\big(\Bbbk[\textbf{x}_{m \times n}] / J_{m,n}(Z)\big)_r=0$ since $\R(n)^\lambda=0.$ Similarly, in case (1) we may restrict the $r$ to $r \le \min{\{m,n\}}$. In case (2) we may restrict the partitions $\lambda$ so that the first column has length at most $m$, because otherwise $L_{\lambda'}(V_m)=0$. In case (3) we may restrict the partitions $\lambda$ so that the first column has length at most $m-1$.\\
(2) Suppose the characteristic of $\Bbbk$ is equal to zero. 
	\begin{itemize}
\item Since the $\Bbbk$-algebra of the rook monoid is semisimple (see \cite{Mun} or \cite[Theorem 11.5.3]{GaMa}) and the $\IS_m \times \IS_n$-module ${\R(m)}^{\lambda} \otimes {\R(n)}^\lambda$ is irreducible (see \cite{Gro}), part (1) of Theorem \ref{m2} implies that the irreducible decomposition of $\big(\Bbbk[\textbf{x}_{m \times n}] / J_{m,n}(\IS)\big)_r$ is  $\bigoplus_{\lambda   \vdash r}{\R(m)}^{\lambda} \otimes {\R(n)}^\lambda$. This is multiplicity free.
	
\item The $\Bbbk$-algebra of $\PT_n$ is not semisimple for $n>1$ \cite{Ste2}. From Theorem \ref{irr}(1) we see that part (2) of Theorem \ref{m2} gives the composition factors for $\big(\Bbbk[\textbf{x}_{m \times n}] / J_{m,n}(\PT)\big)_r$. This is also multiplicity free. We will see in the proof of Theorem \ref{m2} that an explicit composition series is provided.

\item While ${\R(n)}^\lambda$ is an irreducible module of $\T_n$ when $\lambda \neq (1^r)$, the irreducible decomposition of $L_{\lambda'}(U_m)$ as an $\mathfrak{S}_m$-module is not known in general and is closely related to the so called \textit{restriction problem} that asks for the Specht module multiplicities in the restriction of a Schur module to the symmetric group (e.g. see the Introduction of \cite{OZ}).
 \end{itemize}\end{remarks}

\textbf{Cauchy decomposition of $\Bbbk[\textbf{x}_{m \times n}]$.} For the proof of the above theorem we need to recall the characteristic-free Cauchy decomposition of the polynomial ring $\Bbbk[\textbf{x}_{m \times n}]$ as a $\GL_m(\Bbbk) \times \GL_n(\Bbbk)$-module. There are several approaches due to  D\'esarm\'enien, Kung and Rota \cite{DKR}, to DeConcini, Eisenbud and Procesi \cite{DEP}, to Akin, Buchsbaum and Weyman \cite{ABW} and to Krause \cite[Section 8.4]{Kr2}. We follow closely \cite[Section III.1]{ABW}.

If $ i_1, \dots, i_k \in [m]$ and $ j_1, \dots, j_k \in [n]$, where $k \le \min\{m,n\}$, let us denote by $(i_1, \dots, i_k | j_1, \dots, j_k)$ the $k \times k $ minor of the matrix $\textbf{x}_{m \times n}$ corresponding to rows $i_1, \dots, i_k$ and columns $j_1, \dots, j_k$,
\[(i_1, \dots, i_k | j_1, \dots, j_k):=\det(x_{i_u, j_v}), \ 1\le u, v \le k.\]

\begin{definition}\label{minor}Let $S, T$ be tableaux of shape $\lambda$, where $\lambda_1 \le \min\{m,n\}$. For $1 \le p \le \ell(\lambda)$, let \begin{equation}\label{ST}\langle S|T \rangle _p:= (s_{p,1}, \dots, s_{p,\lambda_p} | t_{p,1}, \dots, t_{p,\lambda_p}),\end{equation}
where the entries of rows $p$ of the tableaux $S$ and $T$ are 
		\[s_{p,1}, s_{p,2}, \dots, s_{p,\lambda_p} \ \text{and} \ 
		t_{p,1}, t_{p,2}, \dots, t_{p,\lambda_p}. \]
Define the product  \begin{equation}\label{ST2}(S|T) :=\prod_{p=1}^{\ell{(\lambda)}}\langle S|T \rangle _p . \end{equation}\end{definition}
Next, we order the partitions of $r$ lexicographically. For $\lambda \vdash r$, let $M_\lambda$ be the subspace of $\Bbbk[\textbf{x}_{m \times n}]$ spanned by the elements $(S|T)$ as $S, T$ run over the tableaux of shape $ \ge  \lambda$ and size $r$. We thus have a filtration of $\Bbbk[\textbf{x}_{m \times n}]_r$, \begin{equation}\label{C1} 0 \subseteq M_{(r)} \subseteq M_{(r-1,1)} \subseteq \dots \subseteq M_{(1^r)}=\Bbbk[\textbf{x}_{m \times n}]_r \end{equation}
by $\M_m(\Bbbk) \times \M_n(\Bbbk)$-submodules. Finally, let $\dot{M}_\lambda$ be the subspace of $\Bbbk[\textbf{x}_{m \times n}]_r$ defined by  $\dot{M}_\lambda :=\sum_{\gamma > \la}{M}_\gamma$, where the sum is over all partitions $\gamma$ of $r$ such that  $ \gamma >  \lambda$.  It is shown in \cite[Corollary III.1.3 and Theorem III.1.4]{ABW} that the map \begin{align}\label{beta} \beta_\lambda: L_\lambda(V_m) \otimes L_\lambda (V_n) &\to M_\lambda/\dot{M}_\lambda, \\\nonumber d_{\lambda}(V_m)(X_S) \otimes d_{\lambda}(V_n)(X_T) &\mapsto ( S| T ) + \dot{M}_\lambda\end{align}
is an isomorphism of $\M_m(\Bbbk) \times \M_n(\Bbbk)$-modules.
\begin{proof}[Proof of Theorem \ref{m2}]\label{proof56}For $Z \in \{\IS, \PT, \T\}$ consider the ideal $J_{m,n}(Z)$ of  $\Bbbk[\textbf{x}_{m \times n}]$. From the filtration (\ref{C1}) we obtain the following filtration 
	 \begin{equation}\label{C2} 0 \subseteq \frac{ M_{(r)}+J_{m,n}(Z)}{J_{m,n}(Z)} \subseteq \frac{ M_{(r-1,1)}+J_{m,n}(Z)}{J_{m,n}(Z)} \subseteq \dots \subseteq \frac{ M_{(1^r)}+J_{m,n}(Z)}{J_{m,n}(Z)}. \end{equation}

Composing the map $\beta_\lambda$ of \eqref{beta} with the natural surjection
\[
M_\lambda/\dot{M}_\lambda
\longrightarrow
\frac{M_\lambda+J_{m,n}(Z)}
{\dot{M}_\lambda+J_{m,n}(Z)},
\]
we obtain a surjective map \begin{equation}\label{beta1}
	L_\lambda(V_m) \otimes L_\lambda (V_n) \to \frac{M_\la +J_{m, n}(Z)}{\dot{M}_\la+J_{m, n}(Z)}.
	\end{equation}

(1) Consider part (1) of the theorem, so $Z=\IS$. We have 

\begin{equation}\label{quiso}\big(\frac{\Bbbk[\textbf{x}_{m \times n}]} {J_{m,n}(\IS)}\big)_r \cong \frac{ M_{(1^r)}+J_{m,n}(\IS)}{J_{m,n}(\IS)} \end{equation} as $\IS_m \times \IS_n$ modules. 

If $r > \min\{m,n\}$, then by Lemma \ref{dim1}(1) we have $\big(\Bbbk[\textbf{x}_{m \times n}]/ J_{m, n}(\IS)\big)_r=0$. Let $r \le  \min\{m,n\}$. Consider a tableau $S$ of shape $\lambda$ with entries from $[m]$ such that the weight of some $a \in [m]$ is at least $2$. We distinguish two cases.

Case 1. Suppose row $q$ of $S$ contains the entry $a$ twice. Then  in the tensor product $\La^{\lambda_1}(V_m)\otimes \cdots \otimes \La^{\lambda_q}(V_m) \otimes \cdots \otimes \La^{\lambda_{\ell(\la)}}(V_m)$ of exterior powers we have  $X_S=0$. Thus $\langle S|T \rangle_q =0$ and $(S|T)=0$.

Case 2. Suppose rows $p$ and $q$ of $S$, where $p \neq q$, contain the entry $a$. Consider the Laplace expansion of the determinant $\langle S|T \rangle _p$ defined in (\ref{ST}) along the row \[x_{a,t_{p,1}} \ x_{a,t_{p,2}} \ \cdots \ x_{a,t_{p,\lambda_p}} .\] This yields \begin{equation}\label{laplace1}\langle S|T \rangle _p =\sum_{j=1}^{\lambda_p} \pm x_{a,t_{p,j}}A_j,
	\end{equation}
where $A_j \in \Bbbk[\textbf{x}_{m \times n}]$. Likewise, we obtain \begin{equation}\label{laplace2}\langle S|T \rangle _q =\sum_{j=1}^{\lambda_q} \pm x_{a,t_{q,j}}B_j,
\end{equation}
where $B_j \in \Bbbk[\textbf{x}_{m \times n}]$ and the entries of row $q$ of $T$ are $t_{q,1}, t_{q,2},\dots, t_{q,\lambda_q}$.

From eqs. (\ref{laplace1}) and (\ref{laplace2}), we see that the product $\langle S|T \rangle _p \langle S|T \rangle _q$ is a linear combination of monomials that have the form $x_{a,t_{p,i}}x_{a,t_{q,j}}C_{i,j},$ where $C_{i,j} \in \Bbbk[\textbf{x}_{m \times n}]$. This means that we have $\langle S|T \rangle _p \langle S|T \rangle _q \in J_{m, n}(\IS)$. Thus $( S|T ) \in  J_{m, n}(\IS)$ since $J_{m, n}(\IS)$ is an ideal.

We have shown in Case 1 and Case 2 that if the tableau $S$ has an entry of weight at least 2, then $( S|T ) \in J_{m, n}(\IS)$ for any tableau $T$ of shape $\lambda$. In a similar way it follows that if the tableau $T$ has an entry of weight at least 2, then $( S|T ) \in J_{m, n}(\IS)$ for any tableau $S$ of shape $\lambda$. Thus we have
 \begin{equation}\label{w2}( S|T ) \in J_{m, n}(\IS)\end{equation} if either of $S$ and $T$ has an entry of weight at least 2. From (\ref{w2}) it follows that the map in  (\ref{beta1}) sends the subspaces
 \[
 (L_\lambda(V_m))''\otimes L_\lambda(V_n)
 \quad\text{and}\quad
 L_\lambda(V_m)\otimes (L_\lambda(V_n))''
 \]
 to \{0\}. Hence from Definitions \ref{Specht} and \ref{mSchurf} we have that (\ref{beta1}) induces a map of $\IS_m \times \IS_n$-modules \[\bar{\beta}_\lambda: {\R(m)}^{\lambda'} \otimes {\R(n)}^{\lambda'} \to \frac{M_\la +J_{m, n}(\IS)}{\dot{M}_\la+J_{m, n}(\IS)}.\]
 The map $\bar{\beta}_\lambda$ is surjective because $\beta_\lambda$ is surjective. By summing with respect to all partitions of $r$ we obtain a surjective map of vector spaces  \begin{equation}\label{surj1}\bigoplus_{\lambda \vdash r} {\R(m)}^{\lambda'} \otimes {\R(n)}^{\lambda'} \to \frac{ M_{(1^r)}+J_{m,n}(\IS)}{J_{m,n}(\IS)}.\end{equation}
 Using Theorem \ref{bm}, the identity $\sum_{\lambda \vdash r}(f^\lambda)^2=r!$, Lemma \ref{dim1}(1) and the isomorphism (\ref{quiso}) we have \begin{align*}&\dim \big(\bigoplus_{\lambda \vdash r} {\R(m)}^{\lambda'} \otimes {\R(n)}^{\lambda'}\big)=\sum_{\lambda' \vdash r}\binom{n}{r}f^{\lambda'} \binom{m}{r}f^{\lambda'}\\
 	&=\binom{m}{r} \binom{n}{r}\sum_{\lambda \vdash r}(f^{\lambda'})^2=\binom{m}{r} \binom{n}{r} r!\\&
 	=\dim \big(\Bbbk[\textbf{x}_{m \times n}]/J_{m, n}(\IS)\big)_r
 	=\dim\big(\frac{ M_{(1^r)}+J_{m,n}(\IS)}{J_{m,n}(\IS)}\big).\end{align*}
Hence the surjective map (\ref{surj1}) is an isomorphism (of vector spaces). This implies that each of the surjective maps $\bar{\beta}_\lambda$ is an isomorphism (of $\IS_m \times \IS_n$-modules). Replacing \(\lambda\) by \(\lambda'\) gives the quotients stated in part (1).

(2) The proof of part (2) of the theorem is very similar to the proof of part (1), the only difference is that the computation of dimensions at the end uses the identity \[m^r=\sum_{\lambda \vdash r}\dim( L_{\lambda'}(V_m))f^\lambda\] (see \cite[eq. (5) on p. 52]{F} for $n$ replaced by $r$) in place of  $r!= \sum_{\lambda \vdash r}(f^\lambda)^2$ and  Lemma \ref{dim1}(2) in place of Lemma \ref{dim1}(1).

(3) The proof of part (3) is also similar to the proof of (1) but needs an extra step. So let $\lambda$ be a partition of $r$. Recall the definition of the subspace $W_\lambda$ of $L_\lambda(V_m)$, cf. Definition \ref{W}. We claim that the vector space $W_\lambda \otimes {\R(n)}^{\lambda'}$ is contained in the kernel of the map  (\ref{beta1}). To prove this, let $S, T \in \tab_\lambda$, where $S$ has entries from $[m]$ and $T$ has entries from $[n]$. Also let $1 \le i \le \ell (\lambda) $ and $1 \le j \le \lambda_i$. According to Lemma \ref{Tlem} below, the polynomial $\sum_{u=1}^{m} (S_{i,j}[u]|T)$ belongs to the ideal $J_{m,n}(\T)$ of  $\Bbbk[\textbf{x}_{m \times n}]$,
where $S_{i,j}[u] \in \tab_\lambda$ is the tableau obtained from $S$ by replacing the $(i,j)$ entry with $u$. Thus from Lemma \ref{presT}, we have that $W_\lambda \otimes {\R(n)}^{\lambda'}$ is contained in the kernel of the map  (\ref{beta1}) as claimed.

Hence from Lemma \ref{presT} it follows that the map ($\ref{beta1}$) induces a map of $\mathfrak{S}_m \times \T_n$-modules \[\tilde{\beta}_\lambda: L_{\lambda}(U_m) \otimes {\R(n)}^{\lambda'} \to \frac{M_\lambda+J_{m, n}(\T)} {\dot{M}_\lambda+J_{m, n}(\T)}.\]
From this point we argue as in the proof of part (1) of the theorem (with some minor  differences). In order to be clear we provide the details.

The map $\tilde{\beta}_\lambda$ is surjective because $\beta_\lambda$ is surjective.   After replacing \(\lambda\) by \(\lambda'\) and summing with respect to all partitions of $r$ we obtain a surjective map of vector spaces \begin{equation}\label{surj3}\bigoplus_{\lambda \vdash r} L_{\lambda'}(U_m) \otimes {\R(n)}^{\lambda} \to \frac{ M_{(1^r)}+J_{m,n}(\T)}{J_{m,n}(\T)}.\end{equation}
Using Theorem \ref{bm}, the identity  \[(m-1)^r=\sum_{\lambda \vdash r}\dim( L_{\lambda'}(U_m))f^\lambda\] (see \cite[eq. (5) on p. 52]{F} for $m$ replaced by $m-1$ and $n$ replaced by $r$) and Lemma \ref{dim1}(3) we have \begin{align}\label{c3}&\dim \big(\bigoplus_{\lambda \vdash r} L_{\lambda'}(U_m) \otimes \R(n)^\lambda\big)=\sum_{\lambda \vdash r}\dim \big(L_{\lambda'}(U_m)\big) \binom{n}{r}f^\lambda\\\nonumber
	&=\binom{n}{r} \sum_{\lambda \vdash r}\dim \big(L_{\lambda'}(U_m)\big) f^\lambda=\binom{n}{r}(m-1)^r \\\nonumber&
	=\dim \big(\Bbbk[\textbf{x}_{m \times n}]/J_{m, n}(\T)\big)_r=\dim\big(\frac{ M_{(1^r)}+J_{m,n}(\T)}{J_{m,n}(\T)}\big).\end{align}
Hence the surjective map (\ref{surj3}) is an isomorphism (of vector spaces). Thus  the surjective map $\tilde{\beta}_\lambda$ is an isomorphism (of $\mathfrak{S}_m \times \T_n$-modules). The proof of part (3) of Theorem \ref{m2} is complete provided we prove  Lemma \ref{Tlem} below.\end{proof}
We adopt the notation of Definition \ref{minor}. 
\begin{lemma}\label{Tlem}Let $S, T \in \tab_\lambda$, where $S$ has entries from $[m]$ and $T$ has entries from $[n]$, and let $1 \le i \le \ell (\lambda) $ and $1 \le j \le \lambda_i$. Then we have \begin{equation}\label{Wker} \sum_{u=1}^{m} (S_{i,j}[u]|T) \in J_{m,n}(\T),
	\end{equation}
	where $S_{i,j}[u] \in \tab_\lambda$ is the tableau obtained from $S$ by replacing the $(i,j)$ entry with $u$.
	\end{lemma}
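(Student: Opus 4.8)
The idea is to isolate the single minor that the substitution affects and then exploit multilinearity of the determinant. Writing $(S|T) = \prod_{p=1}^{\ell(\lambda)} \langle S|T \rangle_p$ as in Definition \ref{minor}, note that replacing the $(i,j)$ entry of $S$ by $u$ changes only the factor $\langle S|T \rangle_i$: the box in position $(i,j)$ of $S$ supplies the $j$-th row index $s_{i,j}$ of the minor $(s_{i,1}, \dots, s_{i,\lambda_i} \mid t_{i,1}, \dots, t_{i,\lambda_i}) = \det(x_{s_{i,u},\,t_{i,v}})_{1\le u,v\le\lambda_i}$, while every other factor $\langle S|T\rangle_p$ with $p\neq i$ is untouched. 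Hence
\[
\sum_{u=1}^m (S_{i,j}[u]\mid T) \;=\; \Big(\prod_{p\neq i}\langle S|T\rangle_p\Big)\cdot\sum_{u=1}^m \langle S_{i,j}[u]\mid T\rangle_i ,
\]
and since the ideal $J_{m,n}(\T)$ is closed under multiplication by elements of $\Bbbk[\textbf{x}_{m\times n}]$, it suffices to prove that $\sum_{u=1}^m \langle S_{i,j}[u]\mid T\rangle_i \in J_{m,n}(\T)$.

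Next I would expand this sum by linearity of the determinant in its $j$-th row. Each $\langle S_{i,j}[u]\mid T\rangle_i$ is the determinant of the $\lambda_i\times\lambda_i$ matrix whose rows, in the indices $1,\dots,\lambda_i$, are the rows of $(s_{i,1},\dots,s_{i,\lambda_i}\mid t_{i,1},\dots,t_{i,\lambda_i})$ except that row $j$ has been replaced by $(x_{u,t_{i,1}}, \dots, x_{u,t_{i,\lambda_i}})$. Summing over $u\in[m]$ and using multilinearity in row $j$, the quantity $\sum_{u=1}^m \langle S_{i,j}[u]\mid T\rangle_i$ equals the determinant of the same matrix but with row $j$ now being the row whose $v$-th entry is $\sum_{u=1}^m x_{u,t_{i,v}} = x_{1,t_{i,v}} + x_{2,t_{i,v}} + \cdots + x_{m,t_{i,v}}$.

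Finally, every entry of this new row is exactly a column-sum generator of the ideal $J_{m,n}(\T)$ appearing in Definition \ref{Iideals}(3). Performing the Laplace expansion of the determinant along row $j$ expresses it as a $\Bbbk[\textbf{x}_{m\times n}]$-linear combination of these column sums, so the determinant lies in $J_{m,n}(\T)$, and multiplying by $\prod_{p\neq i}\langle S|T\rangle_p$ keeps us inside the ideal; this yields \eqref{Wker}. I do not expect a genuine obstacle: the only point requiring care is the bookkeeping that identifies the box $(i,j)$ of $S$ with the $j$-th row of the $i$-th minor, after which the proof is a direct application of multilinearity followed by a Laplace expansion.
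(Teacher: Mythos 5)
Your proof is correct and follows essentially the same route as the paper: factor out the unaffected minors $\langle S|T\rangle_p$ for $p\neq i$, use multilinearity of the determinant in row $j$ to collapse the sum $\sum_{u=1}^m\langle S_{i,j}[u]|T\rangle_i$ into a single determinant whose $j$-th row consists of the column sums $\sum_{u=1}^m x_{u,t_{i,v}}$, and conclude via Laplace expansion along that row since each of these sums is a generator of $J_{m,n}(\T)$. The only cosmetic issue is that you reuse the letter $u$ both as the outer summation index and as the row index of the minor, which could briefly confuse a reader but does not affect the argument.
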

\begin{proof}From eq. (\ref{ST2}) we have 
	\begin{align*}\sum_{u=1}^{m}(S_{i,j}[u]|T)&=\sum_{u=1}^{m}\langle S_{i,j}[u]|T\rangle_1 \cdots \langle S_{i,j}[u]|T\rangle_i\cdots \langle S_{i,j}[u]|T\rangle_{\ell(\la)}\\&=
	\sum_{u=1}^{m}\langle S|T\rangle_1 \cdots \langle S_{i,j}[u]|T\rangle_i\cdots \langle S|T\rangle_{\ell(\la)}\\&=
	\langle S|T\rangle_1 \cdots \big(\sum_{u=1}^{m}\langle S_{i,j}[u]|T\rangle_i\big)\cdots \langle S|T\rangle_{\ell(\la)},
	\end{align*}
	where in the second equality we used the fact that the tableaux $S$ and $S_{i,j}[u]$ differ only in the $i$th row and thus $\langle S_{i,j}[u]|T\rangle_q = \langle S|T\rangle_q$ for all $q \neq i$.
	Since $J_{m,n}(\T)$ is an ideal of  $\Bbbk[\textbf{x}_{m \times n}]$, it follows that in order to prove (\ref{Wker}), it suffices to show that
	\begin{equation}\label{Wker2} \sum_{u=1}^{m} \langle S_{i,j}[u]|T\rangle_i \in J_{m,n}(\T).\end{equation}

We prove (\ref{Wker2}) as follows. From Definition \ref{minor} we have
\begin{equation}\label{Wker3}
	\sum_{u=1}^{m} \langle S_{i,j}[u]|T\rangle_i
	=
	\sum_{u=1}^{m}
	(s_{i,1}, \dots, s_{i,j-1},u,s_{i,j+1}, \dots, s_{i,\lambda_i}
	\mid
	t_{i,1}, \dots, t_{i,\lambda_i}),
\end{equation}
where $s_{i,q}$, respectively $t_{i,q}$, is the entry of the tableau $S$,
respectively $T$, in position $(i,q)$, and the $u$ on the right hand side is
located in the $j$th position among the row indices. By multilinearity of the
determinant in its $j$th row, the right hand side of (\ref{Wker3}) is the
determinant of the $\lambda_i \times \lambda_i$ matrix whose $q$th row, for
$q\ne j$, is
\[
(x_{s_{i,q},t_{i,1}},x_{s_{i,q},t_{i,2}},\dots,x_{s_{i,q},t_{i,\lambda_i}})
\]
and whose $j$th row is
\[
\bigg(
\sum_{v=1}^m x_{v,t_{i,1}},
\sum_{v=1}^m x_{v,t_{i,2}},
\dots,
\sum_{v=1}^m x_{v,t_{i,\lambda_i}}
\bigg).
\]
Every entry of this $j$th row belongs to $J_{m,n}(\T)$. Expanding the
determinant along this row, we conclude that the determinant belongs to
$J_{m,n}(\T)$, because $J_{m,n}(\T)$ is an ideal of
$\Bbbk[\textbf{x}_{m \times n}]$. Thus we have shown (\ref{Wker2}).\end{proof}

\subsection{Orbit harmonics quotients}\label{oh}
Next we want to relate Theorem \ref{m2} when $m=n$ to the coordinate rings of the affine varieties $\IS_n$, $\PT_n$ and $\T_n$ (see Corollary \ref{m3} below). This will be done using orbit harmonics and thus we need to recall some relevant background. For more details on orbit harmonics we refer to the paper by Rhoades \cite[Section 2.2]{Rho} or the paper by Reiner and Rhoades \cite[Sections 1.1 and 2.5]{RR}. 

If $Z \subseteq \M_{n}(\Bbbk)$ is a finite locus of points in affine $n \times n$-space, we have the vanishing ideal \[\I(Z) := \{f \in \Bbbk[\textbf{x}_{n \times n}]: f(\textbf{z})=0 \ \textup{for all} \ \textbf{z} \in Z\}\]
and an identification of $\Bbbk$-algebras $\Bbbk[Z] \cong  \Bbbk[\textbf{x}_{n \times n}]/\I(Z)$, where $\Bbbk[Z]$ is the algebra of polynomial functions on $Z$. Since $Z$ is a finite set, Lagrange interpolation yields that $\Bbbk[Z]$ is the space of functions $Z \to \Bbbk$. Hence we may regard $\Bbbk[Z]$ as the $\Bbbk$-vector space of $Z$, i.e. the $\Bbbk$-vector space that has basis the points of $Z$.

If $f \in \Bbbk[\textbf{x}_{n \times n}]$ is a nonzero polynomial, we may write $f=f_d+\cdots+f_1+f_0$ where $f_i$ is a homogeneous polynomial of degree $i$ and $f_d \neq 0$. Define $h(f):=f_d,$ the highest degree component of $f$.
\begin{definition}
	Let $ I \subseteq \Bbbk[\textbf{x}_{n \times n}]$ be an ideal of $\Bbbk[\textbf{x}_{n \times n}]$. \textit{The associated graded ideal} $\gr(I)$ of $I$ is the ideal generated by the polynomials $h(f)$, where $f \in I$ and $f \neq 0$.
\end{definition}
  The result we need is the following. \begin{lemma}[\cite{Rho}]Let $Z \subseteq \M_{n}(\Bbbk)$ be a finite locus of points in the affine space
  	of $n\times n$ matrices. Then there is an isomorphism of vector spaces \begin{equation}\label{har1} \Bbbk[Z] \cong \Bbbk[\textbf{x}_{n \times n}] / \I(Z) \cong \Bbbk[\textbf{x}_{n \times n}] / \gr \I(Z).\end{equation}\end{lemma}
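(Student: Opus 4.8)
The plan is to establish the two isomorphisms in turn: the first is elementary and the second is the orbit harmonics statement. Throughout, write $R:=\Bbbk[\textbf{x}_{n\times n}]$ with its standard grading $R=\bigoplus_{d\ge 0}R_d$, set $I:=\I(Z)$, and $A:=R/I$.

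For $\Bbbk[Z]\cong R/\I(Z)$: the evaluation map $R\to\Bbbk[Z]$, $f\mapsto f|_Z$, is a homomorphism of $\Bbbk$-algebras whose kernel is $\I(Z)$ by the definition of the vanishing ideal, and it is surjective because $Z$ is finite and Lagrange interpolation realizes every function $Z\to\Bbbk$ as the restriction of a polynomial. Hence it induces an algebra isomorphism $R/\I(Z)\cong\Bbbk[Z]$; in particular $A$ is finite dimensional with $\dim_{\Bbbk}A=|Z|$.

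For $R/\I(Z)\cong R/\gr\I(Z)$, which is the crux, the degree filtration $R_{\le d}:=\bigoplus_{e\le d}R_e$ of $R$ induces a filtration of $A$ by the subspaces $A_{\le d}:=(R_{\le d}+I)/I$, and I will show that the associated graded space $\bigoplus_d A_{\le d}/A_{\le d-1}$ is isomorphic, as a graded vector space, to $R/\gr(I)$. For each $d$ consider the linear map $\varphi_d\colon R_d\to A_{\le d}/A_{\le d-1}$ sending $f$ to the class of $f+I$. It is surjective because any class in $A_{\le d}/A_{\le d-1}$ is represented by some $g+I$ with $g\in R_{\le d}$, and $g$ differs from its degree-$d$ homogeneous component $g_d\in R_d$ by an element of $R_{\le d-1}$, so that class also equals $\varphi_d(g_d)$. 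Its kernel consists of those $f\in R_d$ with $f+I\in A_{\le d-1}$, i.e.\ with $f-q\in I$ for some $q\in R_{\le d-1}$: when $f\neq 0$ this forces $f-q$ to be a nonzero element of $I$ whose highest-degree component is $h(f-q)=f$, and conversely the highest-degree component of any degree-$d$ element of $I$ lies in $\ker\varphi_d$. Therefore $\ker\varphi_d=\{h(g):g\in I,\ \deg g=d\}\cup\{0\}$, which is a $\Bbbk$-subspace of $R_d$ because $R$ is an integral domain, and which coincides with the degree-$d$ component of the homogeneous ideal $\gr(I)$ (using the same integral-domain fact: each spanning element $r\cdot h(g)$ of $\gr(I)_d$ equals $h(rg)$ with $rg\in I$ of degree $d$). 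Summing the isomorphisms $R_d/\ker\varphi_d\cong A_{\le d}/A_{\le d-1}$ over $d$ gives $\bigoplus_d A_{\le d}/A_{\le d-1}\cong R/\gr(I)$ as graded vector spaces.

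It remains to note that any filtered vector space is isomorphic to its associated graded: choosing for each $d$ a vector-space complement $C_d$ of $A_{\le d-1}$ in $A_{\le d}$ yields $A=\bigoplus_d C_d$ with $C_d\cong A_{\le d}/A_{\le d-1}$, so $A\cong\bigoplus_d A_{\le d}/A_{\le d-1}\cong R/\gr(I)$. Composing with the first isomorphism gives $\Bbbk[Z]\cong R/\I(Z)\cong R/\gr\I(Z)$. The only step requiring genuine care is the identification of $\ker\varphi_d$ with the degree-$d$ part of $\gr(I)$; everything else is formal, and since $Z$ is finite the whole argument is elementary --- it is exactly the orbit harmonics principle recalled in \cite{Rho,RR}.
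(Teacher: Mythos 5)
Your proof is correct. The paper does not give its own argument for this lemma---it simply cites \cite[Lemma~3.15]{Rho}---and what you have written is precisely the standard orbit-harmonics argument recorded there: the evaluation map plus Lagrange interpolation for the first isomorphism, and for the second the identification of the degree-$d$ layer $A_{\le d}/A_{\le d-1}$ of the filtered ring $A=R/\I(Z)$ with $R_d/\gr(\I(Z))_d$, followed by the observation that an exhaustively filtered vector space is (non-canonically) isomorphic to its associated graded. One small remark on exposition: the integral-domain hypothesis on $R$ is not what makes $\{h(g):g\in I,\ \deg g=d\}\cup\{0\}$ a subspace of $R_d$ (that follows simply because $h(g_1)+h(g_2)$ is either $0$ or equals $h(g_1+g_2)$ with $g_1+g_2\in I$ of degree $d$); it is genuinely needed, as you correctly use it in the parenthetical, only for the identity $r\cdot h(g)=h(rg)$ that shows the spanning elements of $\gr(\I(Z))_d$ lie in $\ker\varphi_d$.
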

For a proof see \cite[Lemma 3.15]{Rho}.

\textbf{The coordinate rings of $\IS_n$, $\PT_n$ and $\T_n$.} Recall we have the ideals $J_{n}(Z)$ of Definition \ref{Iideals}(4) for $Z \in \{\IS, \PT, \T\}$. 

\begin{corollary}\label{main3} For the vanishing ideals $\I(\IS_n)$, $\I(\PT_n)$ and $\I(\T_n)$ of the affine varieties $\IS_n$, $\PT_n$ and  $\T_n$ we have the equalities 
		\[J_{n}(\IS) = \gr(\I(\IS_n)), \ \ J_{n}(\PT) = \gr(\I(\PT_n)), \ \ J_{n}(\T) = \gr(\I(\T_n)).\]
\end{corollary}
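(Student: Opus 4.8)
The plan is to establish, for each $Z \in \{\IS, \PT, \T\}$, the inclusion $J_n(Z) \subseteq \gr(\I(Z_n))$ by inspecting leading forms of generators, and then to upgrade it to an equality by a dimension count that reduces to results already in hand --- Lemma \ref{dim1}, Proposition \ref{card}, and the orbit harmonics isomorphisms (\ref{har1}). Here, for $Z \in \{\IS, \PT, \T\}$, we write $Z_n$ for the corresponding finite submonoid $\IS_n$, $\PT_n$ or $\T_n$ of $\M_n(\Bbbk)$.

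First I would check the inclusion $J_n(Z) \subseteq \gr(\I(Z_n))$ on the generators listed in Definition \ref{Iideals}. Every matrix in $Z_n$ has entries in $\{0,1\}$, so $x_{i,j}(x_{i,j}-1) \in \I(Z_n)$ and therefore $x_{i,j}^2 = h\big(x_{i,j}(x_{i,j}-1)\big) \in \gr(\I(Z_n))$; this handles the generators $x_{i,j}x_{i',j}$ with $i=i'$ (and, for $Z=\IS$, also $x_{i,j}x_{i,j'}$ with $j=j'$). For $i \neq i'$, no two entries in a common column of a matrix in $Z_n$ are simultaneously $1$, so the homogeneous polynomial $x_{i,j}x_{i',j}$ lies already in $\I(Z_n)$, hence in $\gr(\I(Z_n))$; for $Z=\IS$ the same argument with rows in place of columns gives $x_{i,j}x_{i,j'} \in \gr(\I(\IS_n))$ for $j \neq j'$. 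Finally, each column of a matrix in $\T_n$ sums to $1$, so $(x_{1,j}+\cdots+x_{n,j})-1 \in \I(\T_n)$, and its highest-degree component $x_{1,j}+\cdots+x_{n,j}$ lies in $\gr(\I(\T_n))$. Since these exhaust the generators of $J_n(Z)$, the inclusion follows.

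Next I would compare dimensions. By the orbit harmonics lemma, the isomorphisms (\ref{har1}) together with $\dim_{\Bbbk}\Bbbk[Z_n] = |Z_n|$ give $\dim_{\Bbbk}\big(\Bbbk[\textbf{x}_{n \times n}]/\gr(\I(Z_n))\big) = |Z_n|$, which by Proposition \ref{card} equals $\sum_{r=0}^{n}\binom{n}{r}^{2}r!$, $(n+1)^n$ and $n^n$ for $Z = \IS, \PT, \T$ respectively. On the other hand, summing Lemma \ref{dim1}(1) over $r$ gives $\dim_{\Bbbk}\big(\Bbbk[\textbf{x}_{n \times n}]/J_n(\IS)\big) = \sum_{r=0}^{n}\binom{n}{r}^{2}r!$, while Lemma \ref{dim1}(4) and (5) give $\dim_{\Bbbk}\big(\Bbbk[\textbf{x}_{n \times n}]/J_n(\PT)\big) = (n+1)^n$ and $\dim_{\Bbbk}\big(\Bbbk[\textbf{x}_{n \times n}]/J_n(\T)\big) = n^n$. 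Hence in each case $\dim_{\Bbbk}\big(\Bbbk[\textbf{x}_{n \times n}]/J_n(Z)\big) = \dim_{\Bbbk}\big(\Bbbk[\textbf{x}_{n \times n}]/\gr(\I(Z_n))\big) < \infty$, so the inclusion of the first step yields a surjection $\Bbbk[\textbf{x}_{n \times n}]/J_n(Z) \twoheadrightarrow \Bbbk[\textbf{x}_{n \times n}]/\gr(\I(Z_n))$ of finite-dimensional vector spaces of equal dimension, hence an isomorphism; this forces $J_n(Z) = \gr(\I(Z_n))$.

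I do not anticipate a real obstacle: the content of the section is in Theorem \ref{m2} and Lemma \ref{dim1}, and given those the corollary is essentially formal. The only point demanding a bit of care is the bookkeeping of generators and leading terms in the first step --- in particular, not overlooking the ``diagonal'' generators $x_{i,j}^2$, which arise from idempotency of the coordinate functions on $Z_n$ rather than from a row or column incidence relation.
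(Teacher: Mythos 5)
Your proposal is correct and follows essentially the same two-step argument the paper uses: first show $J_n(Z)\subseteq\gr(\I(Z_n))$ by checking each generator (using $x_{i,j}^2-x_{i,j}\in\I(Z_n)$ for the diagonal products, direct membership of $x_{i,j}x_{i',j}$ and $x_{i,j}x_{i,j'}$ in $\I(Z_n)$ for the off-diagonal ones, and the column-sum relation for $\T_n$), and then upgrade to equality by comparing finite dimensions via the orbit harmonics isomorphism (\ref{har1}), Proposition \ref{card}, and Lemma \ref{dim1}. The only cosmetic difference is that the paper spells out $\IS$ and declares $\PT$, $\T$ ``similar,'' whereas you treat all three uniformly.
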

\begin{proof} First we show that $J_{n}(\IS) =\gr(\I(\IS_n))$.
	
	For all $1 \le i,j \le n$ the polynomial $x^2_{i,j}-x_{ij}$ vanishes on $\IS_n$ since every entry of a matrix in $\IS_n$ is equal to $0$ or $1$. Also, for all $1 \le i \le n$ and all $1 \le j < j' \le n$ the polynomial $x_{i,j}x_{i,j'}$ vanishes on $\IS_n$ since every row of a matrix in $\IS_n$ has at most one nonzero entry. Thus we have $x^2_{i,j} \in \gr(\I(\IS_n))$ for all $1\le i,j \le n$ and $x_{i,j}x_{i,j'}\in \gr(\I(\IS_n))$ for all $1\le i \le n$ and all $1 \le j < j' \le n$. Hence $x_{i,j}x_{i,j'}\in \gr(\I(\IS_n))$ for all $1 \le i, j, j' \le n$. In a similar way we conclude that $x_{i,j}x_{i',j}\in \gr(\I(\IS_n))$ for all $1 \le i, i', j \le n$. Thus  $J_n(\IS) \subseteq \gr(\I(\IS_n))$.
	
	From (\ref{har1}) and Proposition \ref{card}(1) we have \[\dim \big(\Bbbk[\textbf{x}_{n \times n}] / \gr(\I(\IS_n))\big)= \dim\big(\Bbbk [\IS_n]\big)=|\IS_n|=\sum_{r=0}^n \tbinom{n}{r}^2r!.\] From this and Lemma \ref{dim1}(1) we get \[\dim \big(\Bbbk[\textbf{x}_{n \times n}] / \gr(\I(\IS_n))\big)= \dim \big(\Bbbk[\textbf{x}_{n \times n}] / J_n(\IS)\big).\] Since $J_{n}(\IS) \subseteq \gr(\I(\IS_n))$, it follows that $J_{n}(\IS) = \gr(\I(\IS_n))$.
	
	The proofs of $J_{n}(\PT) = \gr(\I(\PT_n))$ and $J_{n}(\T) = \gr(\I(\T_n))$ are similar to the above proof. \end{proof}
	\begin{corollary}\label{m3}
We have the following decompositions associated to the graded ideals of the vanishing ideals $\I(\IS_n)$, $\I(\PT_n)$ and $\I(\T_n)$ of the varieties $\IS_n$, $\PT_n$ and  $\T_n$.\begin{enumerate}[leftmargin=*]
	\item[\textup{(1)}] As an $\IS_n \times \IS_n$-module, $\big(\Bbbk[\textbf{x}_{n \times n}] / \gr(\I(\IS_n)) \big)_r$ has a filtration with quotients ${\R(n)}^{\lambda} \otimes {\R(n)}^\lambda,$ where $\lambda \vdash r$, each appearing exactly once.
	\item[\textup{(2)}] As a $\GL_n(\Bbbk) \times \PT_n $-module, $\big(\Bbbk[\textbf{x}_{n \times n}]/ \gr(\I(\PT_n))\big)_r$ has a filtration with quotients $L_{\lambda'}(V_n) \otimes {\R(n)}^\lambda,$ where $\lambda \vdash r$, each appearing exactly once.
	\item[\textup{(3)}] As an $\mathfrak{S}_n \times \T_n $-module, $\big(\Bbbk[\textbf{x}_{n \times n}]/ \gr(\I(\T_n))\big)_r$ has a filtration with quotients $L_{\lambda'}(U_n) \otimes {\R(n)}^\lambda,$ where $\lambda \vdash r$, each appearing exactly once. \end{enumerate}\end{corollary}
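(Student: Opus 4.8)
The plan is to deduce Corollary \ref{m3} at once by combining Theorem \ref{m2}, specialized to $m=n$, with the ideal identities of Corollary \ref{main3}. Indeed, Corollary \ref{main3} furnishes $\gr(\I(\IS_n)) = J_n(\IS)$, $\gr(\I(\PT_n)) = J_n(\PT)$ and $\gr(\I(\T_n)) = J_n(\T)$, while by Definition \ref{Iideals}(4) the symbol $J_n(Z)$ is by definition $J_{n,n}(Z)$ for every $Z \in \{\IS, \PT, \T\}$. Consequently $\Bbbk[\textbf{x}_{n \times n}] / \gr(\I(Z))$ is the very same graded $\Bbbk$-algebra as $\Bbbk[\textbf{x}_{n \times n}] / J_{n,n}(Z)$, so its degree-$r$ homogeneous component equals $\big(\Bbbk[\textbf{x}_{n \times n}] / J_{n,n}(Z)\big)_r$, and the whole statement reduces to reading off Theorem \ref{m2} with $m=n$.

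First I would note that the group and monoid actions appearing in Corollary \ref{m3} are exactly the restrictions of the $\M_n(\Bbbk) \times \M_n(\Bbbk)$-action on $\Bbbk[\textbf{x}_{n \times n}]$ defined in (\ref{Iact}) to $\IS_n \times \IS_n$, $\GL_n(\Bbbk) \times \PT_n$ and $\mathfrak{S}_n \times \T_n$ respectively; each of the ideals $J_{n,n}(Z)$ is a submodule for the corresponding action, as recorded in the bullet list just before Theorem \ref{m2}, so the quotient is a well-defined module and the identification $\Bbbk[\textbf{x}_{n \times n}] / \gr(\I(Z)) = \Bbbk[\textbf{x}_{n \times n}] / J_{n,n}(Z)$ is one of graded modules. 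Then part (1) is Theorem \ref{m2}(1) with $m=n$, giving an $\IS_n \times \IS_n$-filtration of the degree-$r$ component with successive quotients $\R(n)^\lambda \otimes \R(n)^\lambda$, $\lambda \vdash r$; part (2) is Theorem \ref{m2}(2) with $m=n$, with quotients $L_{\lambda'}(V_n) \otimes \R(n)^\lambda$; and part (3) is Theorem \ref{m2}(3) with $m=n$, with quotients $L_{\lambda'}(U_n) \otimes \R(n)^\lambda$.

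There is, by design, essentially no new obstacle at this stage: the substantive work has already been carried out in the proof of Theorem \ref{m2} (the characteristic-free Cauchy decomposition together with the three ideal-specific vanishing arguments, including Lemma \ref{Tlem} for the $\T_n$ case) and in the proof of Corollary \ref{main3} (the inclusion $J_n(Z) \subseteq \gr(\I(Z))$ plus the matching dimension count via orbit harmonics, Lemma \ref{dim1} and Proposition \ref{card}). The only point to keep honest is conceptual rather than technical: orbit harmonics is precisely what legitimizes passing from the ungraded coordinate ring $\Bbbk[Z]$ to the graded ring $\Bbbk[\textbf{x}_{n \times n}] / \gr(\I(Z))$ in which $\gr(\I(Z))$ can be recognized as the combinatorially defined homogeneous ideal $J_n(Z)$, and once this recognition is in place Corollary \ref{m3} is a formal consequence of Theorem \ref{m2}.
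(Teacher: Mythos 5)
Your proposal is correct and takes exactly the same route as the paper: the paper's own proof of Corollary~\ref{m3} is the one-line observation ``This follows from Theorem~\ref{m2} and Corollary~\ref{main3},'' and you have simply spelled out the (genuinely immediate) substitution $m=n$, the notational identity $J_n(Z)=J_{n,n}(Z)$, the ideal identities $\gr(\I(Z))=J_n(Z)$ from Corollary~\ref{main3}, and the compatibility of the module structures.
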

\begin{proof}This follows from Theorem \ref{m2} and Corollary \ref{main3}.\end{proof}

\appendix
\section{Skew symmetric Cauchy decompositions}\label{ssC}

Our purpose here is to obtain a skew symmetric analog of Theorem \ref{m2}. 
Let us recall that the group $\GL_m({\Bbbk}) \times \GL_n(\Bbbk)$ acts on the exterior algebra $\Lambda(V_m \otimes V_n)$ by extending the usual action on $V_m \otimes V_n$. When the characteristic of $\Bbbk$ is zero, the irreducible decomposition of the degree $r$ homogeneous component  $\Lambda^r(V_m \otimes V_n)$ is $\bigoplus_{\la \vdash r} L_\la(V_m) \otimes K_\la(V_n)$, see \cite[Theorem 4.8.6]{Sag} or \cite[Corollary 2.3.3]{W}. In arbitrary characteristic, the previous decomposition holds up to filtration \cite[(Section III.2) Theorem]{ABW}.
\subsection{Definition and statement}
Let $\textbf{y}_{m,n}=(y_{i,j})$ be an $m \times n$ matrix of skew commuting variables $y_{i,j}$, where $1 \le i \le m$ and $1 \le j \le n$, and consider the corresponding skew polynomial ring $\Bbbk\langle\textbf{y}_{m \times n}\rangle$. So we have \begin{center}
	$y_{i,j}^2=0$ and $y_{i,j}y_{k,l}=- y_{k,l}y_{i,j}$ for all $1 \le i,k \le m$ and $1 \le j, l \le n$.
\end{center}

\begin{definition}\label{skewideals}We define the following ideals of $\Bbbk\langle\textbf{y}_{m \times n}\rangle$.
\begin{enumerate}[leftmargin=*]
	\item[\textup{(1)}] Let $J'_{m,n}(\IS)$ be the ideal  generated by 
	\begin{itemize}
		\item 	any product $y_{i,j}y_{i',j}$ of variables in the same column, $1 \le i,i' \le m$ and $1 \le j \le n$ and
		\item any product $y_{i,j}y_{i,j'}$ of variables in the same row, $1 \le i \le m$ and $1 \le j, j' \le n$.
	\end{itemize}
		\item[\textup{(2)}] Let $J'_{m, n}(\PT)$ be the ideal  generated by 
	\begin{itemize}
		\item any product $y_{i,j}y_{i',j}$ of variables in the same column, $1 \le i,i' \le m$ and $1 \le j \le n$.
	\end{itemize}
	\item[\textup{(3)}] Let $J'_{m,n}(\T)$ be the ideal  generated by 
	\begin{itemize}
		\item any product $y_{i,j}y_{i',j}$ of variables in the same column, $1 \le i,i' \le m$ and $1 \le j \le n$ and 
		\item the sum $y_{1,j} + y_{2,j}+ \cdots +y_{m,j}$ of all the variables in the same column, $1 \le j \le n$.
	\end{itemize}
\end{enumerate}		
\end{definition}

From the definition we have $J'_{m,n}(\PT) \subseteq J'_{m,n}(\T)$.

We have the grading $\Bbbk\langle\textbf{y}_{m \times n}\rangle = \bigoplus_{r \ge0}\Bbbk\langle\textbf{y}_{m \times n}\rangle_r$, where  $\Bbbk\langle\textbf{y}_{m \times n}\rangle_r$ is the subspace of $\Bbbk\langle\textbf{y}_{m \times n}\rangle$ generated by the homogeneous polynomials of degree $r$.  Since the ideals $J'_{m, n}(Z)$, where $Z \in \{\PT, \IS, \T\}$, of $\Bbbk\langle\textbf{y}_{m \times n}\rangle$ are   homogeneous, we have the corresponding grading \[\Bbbk\langle\textbf{y}_{m \times n}\rangle/ J'_{m,n}(Z)= \bigoplus_{r\ge 0} \big(\Bbbk\langle \textbf{y}_{m \times n}\rangle / J'_{m,n}(Z)\big)_r.\]

The formulae in (\ref{Iact}) for $y_{i,j}$ in place of $x_{i,j}$ define actions of $\PT_m \times \PT_n$ and $\GL_m(\Bbbk)$ on $\Bbbk\langle \textbf{y}_{m \times n}\rangle$. It is straightforward to verify that the ideal \begin{itemize}\item $J'_{m,n}(\IS)$ is an $\IS_m \times \IS_n$-submodule of $\Bbbk\langle \textbf{y}_{m \times n}\rangle$,
	\item $J'_{m,n}(\PT)$ is a $\GL_m(\Bbbk) \times \PT_n$-submodule of $\Bbbk\langle \textbf{y}_{m \times n}\rangle$, and 
	\item $J'_{m,n}(\T)$ is an $\mathfrak{S}_m \times \T_n$-submodule of $\Bbbk\langle \textbf{y}_{m \times n}\rangle$. 
\end{itemize}

\begin{lemma}\label{dim1skew} The dimensions of the vector spaces $\big(\Bbbk\langle \textbf{y}_{m \times n}\rangle / J'_{m,n}(Z)\big)_r$, where $Z \in \{\IS, \PT, \T\}$, are as follows. \begin{enumerate}[leftmargin=*]
		\item[\textup{(1)}]	$\dim \big(\Bbbk\langle\textbf{y}_{m \times n}\rangle/ J'_{m, n}(\IS)\big)_r =\binom{m}{r}\binom{n}{r}r!.$
			\item[\textup{(2)}]$\dim \big(\Bbbk\langle\textbf{y}_{m \times n}\rangle/ J'_{m, n}(\PT)\big)_r =\binom{n}{r}m^r.$
			\item[\textup{(3)}]$\dim \big(\Bbbk\langle\textbf{y}_{m \times n}\rangle/ J'_{m,n}(\T)\big)_r =\binom{n}{r}(m-1)^r.$\end{enumerate}
\end{lemma}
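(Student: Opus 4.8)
The plan is to mimic the proof of Lemma~\ref{dim1}, exploiting that $\Bbbk\langle\textbf{y}_{m\times n}\rangle$ is the exterior algebra on the $mn$ generators $y_{i,j}$, hence has as a $\Bbbk$-basis the ordered square-free monomials $y_{i_1,j_1}\cdots y_{i_r,j_r}$ in which the pairs $(i_a,j_a)$ are pairwise distinct. For $Z\in\{\IS,\PT\}$ the ideal $J'_{m,n}(Z)$ is generated by degree-two monomials in the $y_{i,j}$ (the generators of the shape $y_{i,j}y_{i,j}$ being already zero in the exterior algebra), so it is a monomial ideal, and I would use that $\big(\Bbbk\langle\textbf{y}_{m\times n}\rangle/J'_{m,n}(Z)\big)_r$ then has as a $\Bbbk$-basis the cosets of the square-free degree-$r$ monomials divisible by none of the generators.

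For part~(1), such a monomial survives modulo $J'_{m,n}(\IS)$ exactly when no two of its variables share a row or a column, i.e. when its row indices are pairwise distinct and its column indices are pairwise distinct. Ordering the variables by increasing row index fixes such a monomial up to sign, so the surviving degree-$r$ monomials are in bijection with the choices of an $r$-subset of $[m]$ (the rows) together with an injection of it into $[n]$ (the columns); there are $\binom{m}{r}\cdot n(n-1)\cdots(n-r+1)=\binom{m}{r}\binom{n}{r}r!$ of these, and none when $r>\min\{m,n\}$, consistent with the convention $\binom{a}{b}=0$ for $b>a$. This is~(1). For part~(2) the same reasoning applies with only the common-column condition imposed: a square-free monomial survives modulo $J'_{m,n}(\PT)$ iff its column indices are pairwise distinct, with no restriction on the rows. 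Ordering by increasing column index, the surviving degree-$r$ monomials biject with an $r$-subset of $[n]$ together with an arbitrary map from it to $[m]$, giving $\binom{n}{r}m^r$, which is~(2).

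For part~(3) I would follow the proof of Lemma~\ref{dim1}(3). The substitution $y_{i,j}\mapsto y_{i,j}$ for $i<m$ and $y_{m,j}\mapsto -(y_{1,j}+\cdots+y_{m-1,j})$ respects the exterior-algebra relations (one checks $(\sum_{s<m}y_{s,j})^2=0$, while the relevant anticommutativity relations are linear or bilinear in the substituted value), hence defines a surjection of graded algebras $\Bbbk\langle\textbf{y}_{m\times n}\rangle\to\Bbbk\langle\textbf{y}_{(m-1)\times n}\rangle$. It carries every generator of $J'_{m,n}(\T)$ into $J'_{m-1,n}(\PT)$ — the column sum maps to $0$, the product $y_{i,j}y_{m,j}$ with $i<m$ maps to $-\sum_{t<m}y_{i,j}y_{t,j}$, and $y_{m,j}^2$ maps to $0$ — so it induces a surjection $\big(\Bbbk\langle\textbf{y}_{m\times n}\rangle/J'_{m,n}(\T)\big)_r\twoheadrightarrow\big(\Bbbk\langle\textbf{y}_{(m-1)\times n}\rangle/J'_{m-1,n}(\PT)\big)_r$, whence $\dim\big(\Bbbk\langle\textbf{y}_{m\times n}\rangle/J'_{m,n}(\T)\big)_r\ge\binom{n}{r}(m-1)^r$ by~(2). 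Conversely, repeatedly using the column-sum relations to eliminate every occurrence of the row index $m$ (an operation that creates no new occurrences of $m$, since it only spreads one variable over rows of a single column that stays distinct from the others), I would show that $\big(\Bbbk\langle\textbf{y}_{m\times n}\rangle/J'_{m,n}(\T)\big)_r$ is spanned by the cosets of square-free degree-$r$ monomials with pairwise distinct columns and all row indices in $[m-1]$, of which there are at most $\binom{n}{r}(m-1)^r$; combining the two inequalities gives~(3).

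The step I expect to need the most care is the claim that a quotient of the exterior algebra by a monomial ideal has the complementary set of monomials as a $\Bbbk$-basis — that is, their linear independence, not merely that they span — which is the exterior-algebra analogue of the elementary fact used tacitly in Lemma~\ref{dim1} for the ordinary polynomial ring. I would either verify this directly or deduce it from the observation that the generating degree-two monomials already form a Gr\"obner basis of $J'_{m,n}(Z)$. Everything else is the bookkeeping already carried out in the proof of Lemma~\ref{dim1}.
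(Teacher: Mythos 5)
Your proposal is correct and follows essentially the same route as the paper, which itself just says the proof is ``very similar to the proof of Lemma~\ref{dim1}'': for $\IS$ and $\PT$ count the square-free monomials that avoid the degree-two monomial generators, and for $\T$ use the graded surjection $\Bbbk\langle\textbf{y}_{m\times n}\rangle/J'_{m,n}(\T)\twoheadrightarrow\Bbbk\langle\textbf{y}_{(m-1)\times n}\rangle/J'_{m-1,n}(\PT)$ induced by $y_{m,j}\mapsto -(y_{1,j}+\cdots+y_{m-1,j})$ together with a spanning argument. Your flagged concern about linear independence of the surviving monomials is legitimate but easily discharged (for a monomial ideal in the exterior algebra, the ideal is spanned by exactly those square-free monomials divisible by a generator, so the complementary monomials form a basis of the quotient), and is the same ``straightforward to verify'' point the paper leaves tacit in Lemma~\ref{dim1}.
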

\begin{proof} This is very similar to the proof of Lemma \ref{dim1}.
\end{proof}
The main result of the Appendix is the following. \begin{theorem}\label{skewC} Suppose $m,n,r$ are positive integers.
	\begin{enumerate}[leftmargin=*]
		\item[\textup{(1)}] The $\IS_m \times \IS_n$-module $\big(\Bbbk\langle\textbf{y}_{m \times n}\rangle / J'_{m,n}(\IS) \big)_r$ has a filtration with quotients ${\R(m)}^{\lambda'} \otimes {\R(n)}_{\lambda}, \ \lambda \vdash r,$ each appearing exactly once.
		\item[\textup{(2)}] The $\GL_m(\Bbbk) \times \PT_n$-module $\big(\Bbbk\langle\textbf{y}_{m \times n}\rangle/ J'_{m,n}(\PT)\big)_r$ has a filtration with quotients $L_\lambda(V_m) \otimes {\R(n)}_{\lambda}, \ \lambda \vdash r,$ each appearing exactly once.
		\item[\textup{(3)}] The $\mathfrak{S}_m \times \T_n$-module $\big(\Bbbk\langle\textbf{y}_{m \times n}\rangle/ J'_{m,n}(\T)\big)_r$ has a filtration with quotients $L_\lambda(U_m) \otimes {\R(n)}_{\lambda}, \ \lambda \vdash r,$ each appearing exactly once.
	\end{enumerate}
\end{theorem}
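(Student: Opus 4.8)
The plan is to transcribe the proof of Theorem~\ref{m2}, with the symmetric Cauchy filtration of $\Bbbk[\mathbf{x}_{m\times n}]_r$ replaced throughout by the skew--symmetric one. The first step is to identify $\Bbbk\langle\mathbf{y}_{m\times n}\rangle_r$ with $\Lambda^r(V_m\otimes V_n)$ as an $\M_m(\Bbbk)\times\M_n(\Bbbk)$-module (sending $y_{i,j}\mapsto e_i\otimes e_j$) and to invoke \cite[Section~III.2]{ABW}: there is a filtration $0\subseteq\cdots\subseteq M_\lambda\subseteq\cdots\subseteq\Lambda^r(V_m\otimes V_n)$ by $\M_m(\Bbbk)\times\M_n(\Bbbk)$-submodules, indexed by $\lambda\vdash r$ (with $\dot{M}_\lambda$ the sum of the $M_\gamma$ strictly above $M_\lambda$), together with isomorphisms $\beta_\lambda\colon L_\lambda(V_m)\otimes K_\lambda(V_n)\xrightarrow{\ \sim\ }M_\lambda/\dot{M}_\lambda$ sending $d_\lambda(V_m)(X_S)\otimes d'_\lambda(V_n)(Y_T)$ to the class of a skew bideterminant $(S\,|\,T)\in\Bbbk\langle\mathbf{y}_{m\times n}\rangle$, built (as in Definition~\ref{minor}, \eqref{ST2}) as a product of row factors $\langle S\,|\,T\rangle_p$; the only feature we need is that every monomial of $(S\,|\,T)$ is, up to sign and reordering, a product $y_{a_1,b_1}\cdots y_{a_r,b_r}$ in which $\{a_1,\dots,a_r\}$ is the multiset of entries of $S$ and $\{b_1,\dots,b_r\}$ is the multiset of entries of $T$. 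Intersecting this filtration with $J'_{m,n}(Z)$ for $Z\in\{\IS,\PT,\T\}$ gives, exactly as in \eqref{C2}--\eqref{beta1}, a filtration of $\big(\Bbbk\langle\mathbf{y}_{m\times n}\rangle/J'_{m,n}(Z)\big)_r$ together with a surjection $L_\lambda(V_m)\otimes K_\lambda(V_n)\twoheadrightarrow\big(M_\lambda+J'_{m,n}(Z)\big)/\big(\dot{M}_\lambda+J'_{m,n}(Z)\big)$ for each $\lambda$.

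Second, I would show these surjections descend to the asserted tensor products, the skew analogue of \eqref{w2}: if the tableau $S$ has an entry of weight $\ge 2$, then either that repetition occurs within one row of $S$, whence $X_S=0$ and $(S\,|\,T)=0$, or it occurs across two rows, whence every monomial of $(S\,|\,T)$ contains two variables with the same first index --- i.e.\ two variables in the same row of $\mathbf{y}_{m\times n}$ --- so $(S\,|\,T)\in J'_{m,n}(\IS)$; symmetrically, if $T$ has an entry of weight $\ge 2$ then every monomial of $(S\,|\,T)$ contains two variables in the same column of $\mathbf{y}_{m\times n}$, so $(S\,|\,T)\in J'_{m,n}(\PT)\subseteq J'_{m,n}(\IS)$. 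Since, by Definition~\ref{X}, $L_\lambda(V_m)''$ is spanned by the $d_\lambda(V_m)(X_S)$ with $S$ of weight in $\Lambda(m,r)''$, and likewise for $K_\lambda(V_n)''$, this shows $\beta_\lambda$ induces, in case (1), a surjection ${\R(m)}^{\lambda'}\otimes{\R(n)}_\lambda\twoheadrightarrow\big(M_\lambda+J'_{m,n}(\IS)\big)/\big(\dot{M}_\lambda+J'_{m,n}(\IS)\big)$, and in case (2), a surjection $L_\lambda(V_m)\otimes{\R(n)}_\lambda\twoheadrightarrow\big(M_\lambda+J'_{m,n}(\PT)\big)/\big(\dot{M}_\lambda+J'_{m,n}(\PT)\big)$. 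For case (3), since $W_\lambda$ (Definition~\ref{W}) is generated by the elements $\sum_{u=1}^m d_\lambda(V_m)(X_{S_{i,j}[u]})$, one needs in addition the skew analogue of Lemma~\ref{Tlem}, namely $\sum_{u=1}^m(S_{i,j}[u]\,|\,T)\in J'_{m,n}(\T)$; this follows by factoring $(S_{i,j}[u]\,|\,T)$ over the rows of $\lambda$ (only the $i$-th factor depends on $u$), pulling the sum inside, and using that the $i$-th row factor is linear in its $j$-th $m$-index slot, so that $\sum_{u=1}^m\langle S_{i,j}[u]\,|\,T\rangle_i$ is a linear combination of monomials each containing a column sum $y_{1,w}+\cdots+y_{m,w}\in J'_{m,n}(\T)$. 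Together with Lemma~\ref{presT} this yields, in case (3), a surjection $L_\lambda(U_m)\otimes{\R(n)}_\lambda\twoheadrightarrow\big(M_\lambda+J'_{m,n}(\T)\big)/\big(\dot{M}_\lambda+J'_{m,n}(\T)\big)$.

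Finally, I would sum the surjections over $\lambda\vdash r$ and compare dimensions. By Proposition~\ref{bm} one has $\dim{\R(m)}^{\lambda'}=\binom{m}{r}f^{\lambda'}$ and $\dim{\R(n)}_\lambda=\binom{n}{r}f_\lambda$, with $f^{\lambda'}=f_\lambda$ equal to the number of standard Young tableaux of shape $\lambda$; so case (1) reduces to $\sum_{\lambda\vdash r}(f^\lambda)^2=r!$, case (2) to $\sum_{\lambda\vdash r}\dim\big(L_\lambda(V_m)\big)f_\lambda=m^r$, and case (3) to $\sum_{\lambda\vdash r}\dim\big(L_\lambda(U_m)\big)f_\lambda=(m-1)^r$ (the latter two being evaluations of $\sum_\mu f^\mu s_\mu=p_1^r$ at $1^m$ and at $1^{m-1}$, using $\dim U_m=m-1$). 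By Lemma~\ref{dim1skew} the source and target of the summed surjection have equal dimension in each case, so each $\beta_\lambda$-induced map is an isomorphism, which is precisely the claimed filtration. I expect the main obstacle to be the bookkeeping inside \cite[Section~III.2]{ABW}: one must pin down the precise form of the skew bideterminants $(S\,|\,T)$ and verify that the statements ``$X_S=0$ when a row of $S$ has a repeat'', ``row-linearity of a row factor'', and ``a row factor with a column-sum slot lies in $J'_{m,n}(\T)$'' hold in the skew-commutative ring $\Bbbk\langle\mathbf{y}_{m\times n}\rangle$; once this is settled, the remainder is a routine transcription of the proofs of Theorem~\ref{m2} and Lemma~\ref{Tlem}.
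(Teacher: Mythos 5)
Your proposal is correct and follows essentially the same route as the paper's own proof in the Appendix: transport the skew Cauchy filtration of $\Lambda^r(V_m\otimes V_n)$ from \cite[Section III.2]{ABW}, establish the two containment lemmas (the paper's Lemma \ref{lemskew1} and Lemma \ref{lemskew2}, which you re-derive), and conclude by the same dimension count against Lemma \ref{dim1skew}. The one tiny divergence is cosmetic: for the case where $S$ has a repeated entry in a single row, you appeal to $X_S=0$ in $\Lambda^\lambda(V_m)$ rather than deriving $(S\,|\,T)\in J'_{m,n}(\IS)$ directly from the formula for the row factor as the paper does for $T$; both are valid and deliver the same conclusion.
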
 The proof of Theorem \ref{skewC} will be given in Section \ref{A4} below.

We remark that, to the best of our knowledge, no $\mathfrak{S}_m \times \mathfrak{S}_n$ analog of the above theorem has appeared in the literature, even in characteristic zero.
\subsection{Cauchy decomposition of $\Bbbk\langle\textbf{y}_{m \times n}\rangle$.} For the proof of the above theorem we need to recall the Cauchy decomposition of  $\Bbbk\langle\textbf{y}_{m \times n}\rangle$ as a $\GL_m(\Bbbk) \times \GL_n(\Bbbk)$-module. We follow closely \cite[Section III.2]{ABW}.

\begin{definition}\label{per}For a partition $\lambda$, let $S, T$ be tableaux of shape $\lambda$ with entries from $[m]$ and $[n]$ respectively. 
	
	(1) For $ 1\le p \le \ell(\la)$, suppose the entries in row $p$ of $S$ and row $p$ of $T$ are
			\[s_{1}, \dots, s_{\lambda_p} \ \text{and} \ t_{1},\dots, t_{1},t_{2},\dots, t_{2}, \dots,t_{q},\dots, t_{q},\]
where $t_1, t_2, \dots, t_q$ are distinct.  Define $\langle S|T \rangle_{p} \in  \Bbbk\langle \textbf{y}_{m \times n} \rangle$ by 
\begin{equation}\label{per1}
\langle S|T \rangle_{p}:=\sum_{h_1, ..., h_{\lambda_p}}y_{s_1 , h_1} y_{s_2, h_2} \cdots y_{s_{\la_p}, h_{\la_p}},
\end{equation}
where the sequence of indices $h_1, \dots, h_{\lambda_p}$ runs over all rearrangements of \[t_{1}, \dots, t_{1}, t_{2}, \dots, t_{2}, \dots, t_{q}, \dots, t_{q}.\] 

(2) Define $(S|T) \in  \Bbbk\langle \textbf{y}_{m \times n} \rangle$ by
\begin{equation}\label{per2}(S|T) :=\prod_{p=1}^{\ell(\la)}\langle S|T \rangle_{p}
\end{equation} \end{definition}
We remark that eq. (\ref{per1}) in the previous definition is not the same given in \cite{ABW}, but is easily seen to be equal to the definition given in the fourth line on p. 247 in \cite{ABW}.

Next, we order the partitions of $r$ lexicographically. For $\lambda \vdash r$, let $M_\lambda$ be the subspace of $\Bbbk\langle\textbf{y}_{m \times n}\rangle$ spanned by the elements $(S|T)$ as $S, T$ run over the tableaux of shape $ \ge  \lambda$ and size $r$ with entries from $[m]$ and $[n]$ respectively. We thus have a filtration of $\Bbbk\langle\textbf{y}_{m \times n}\rangle_r$, \begin{equation}\label{skewC1} 0 \subseteq M_{(r)} \subseteq M_{(r-1,1)} \subseteq \dots \subseteq M_{(1^r)}=\Bbbk\langle\textbf{y}_{m \times n}\rangle_r \end{equation}
by $\GL_m(\Bbbk) \times \GL_n(\Bbbk)$-submodules. Finally, let $\dot{M}_\lambda$ be the subspace of $\Bbbk\langle\textbf{y}_{m \times n}\rangle$ spanned by the elements $(S|T)$ as $S, T$ run over the tableaux of shape $ >  \lambda$ and size $r$.  It is shown in \cite[Corollary III.2.3 and Theorem III.2.4]{ABW} that the map \begin{align}\label{betaskew} \beta_\lambda: L_\lambda(V_m) \otimes K_\lambda (V_n) \to M_\lambda/\dot{M}_\lambda,  \ \ d_{\lambda}(V_m)(X_S) \otimes d'_{\lambda}(V_n)(Y_T) \mapsto (S| T ) + \dot{M}_\lambda\end{align}
is an isomorphism of $\M_m(\Bbbk) \times \M_n(\Bbbk)$-modules.

\subsection{Two lemmas} For the proof of Theorem  \ref{skewC} we will need two lemmas concerning the filtration (\ref{skewC1})  and the ideals of Definition \ref{skewideals}.
\begin{lemma}\label{lemskew1} Suppose $\la$ is a partition and $S \in \tab_\lambda([m])$, $T \in \tab_\lambda([n])$. \begin{enumerate}[leftmargin=*]
		\item[\textup{(1)}] If $T$ contains an entry with weight at least 2, then $(S|T) \in J'_{m,n}(\PT) \subseteq J'_{m,n}(\IS)$.
		\item[\textup{(2)}] If $S$ contains an entry with weight at least 2, then $(S|T) \in J'_{m,n}(\IS)$.
	\end{enumerate}
\end{lemma}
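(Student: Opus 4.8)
The plan is to analyze the structure of the product $(S|T) = \prod_{p=1}^{\ell(\lambda)}\langle S|T\rangle_p$ defined in (\ref{per1})--(\ref{per2}) and locate, in each case, a pair of variables lying in a common row or column of $\mathbf{y}_{m\times n}$, which forces membership in the relevant ideal. I would treat part (1) and part (2) separately, following the template of the corresponding argument for $J_{m,n}(\IS)$ in the proof of Theorem \ref{m2}(1), but adapted to the skew-commuting setting.

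\textbf{Part (1).} Suppose $T$ has an entry, say $a\in[n]$, of weight at least $2$. First consider the case where $a$ appears twice in a single row $p$ of $T$. Then in the expansion (\ref{per1}) defining $\langle S|T\rangle_p$, every rearrangement $h_1,\dots,h_{\lambda_p}$ of the row-$p$ entries of $T$ contains the index $a$ in at least two positions, say positions $c$ and $c'$; the corresponding monomial $y_{s_1,h_1}\cdots y_{s_{\lambda_p},h_{\lambda_p}}$ then contains the two variables $y_{s_c,a}$ and $y_{s_{c'},a}$, which lie in the same column $a$ of $\mathbf{y}_{m\times n}$, hence lies in $J'_{m,n}(\PT)$. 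Summing, $\langle S|T\rangle_p\in J'_{m,n}(\PT)$, and since $J'_{m,n}(\PT)$ is an ideal, $(S|T)\in J'_{m,n}(\PT)$. Now consider the case where $a$ appears in two distinct rows $p\neq q$. Then, just as in Case 2 of the proof of Theorem \ref{m2}(1), expanding both $\langle S|T\rangle_p$ and $\langle S|T\rangle_q$ by collecting the factor indexed by the occurrence of $a$: in $\langle S|T\rangle_p$ every monomial has a factor $y_{s',a}$ for some row index $s'$ from row $p$ of $S$, and likewise every monomial of $\langle S|T\rangle_q$ has a factor $y_{s'',a}$ for some $s''$ from row $q$ of $S$. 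Hence every monomial of the product $\langle S|T\rangle_p\langle S|T\rangle_q$ contains two variables in column $a$, so this product lies in $J'_{m,n}(\PT)$, and again $(S|T)\in J'_{m,n}(\PT)$. The inclusion $J'_{m,n}(\PT)\subseteq J'_{m,n}(\IS)$ was noted right after Definition \ref{skewideals}.

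\textbf{Part (2).} Suppose $S$ has an entry $b\in[m]$ of weight at least $2$. If $b$ occurs twice in one row $p$ of $S$, then every monomial $y_{s_1,h_1}\cdots y_{s_{\lambda_p},h_{\lambda_p}}$ in (\ref{per1}) has two variables $y_{b,h_c}$ and $y_{b,h_{c'}}$ in the same row $b$ of $\mathbf{y}_{m\times n}$; since $h_c\neq h_{c'}$ is not guaranteed, I note that either they are distinct column indices (a product of two variables in row $b$, in $J'_{m,n}(\IS)$) or equal (giving $y_{b,h_c}^2=0$); in both cases the monomial lies in $J'_{m,n}(\IS)$, so $\langle S|T\rangle_p\in J'_{m,n}(\IS)$ and $(S|T)\in J'_{m,n}(\IS)$. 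If instead $b$ occurs in two distinct rows $p\neq q$ of $S$, then every monomial of $\langle S|T\rangle_p$ has a factor $y_{b,h'}$ and every monomial of $\langle S|T\rangle_q$ has a factor $y_{b,h''}$, so each monomial of the product contains two variables in row $b$; if $h'=h''$ the monomial vanishes, otherwise it lies in $J'_{m,n}(\IS)$. Either way $(S|T)\in J'_{m,n}(\IS)$.

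\textbf{Expected main obstacle.} The routine part is recognizing the offending pair of variables; the one place requiring a little care, absent in the commutative analogue, is the bookkeeping of signs and of repeated column indices in (\ref{per1}): because the $y_{i,j}$ anticommute and square to zero, I must check that collecting monomials and reordering factors does not accidentally cancel the terms I want, and that the $J'$-membership argument survives the possibility $h_c=h_{c'}$ (which only helps, since then the monomial is already zero). I would also double-check that the alternative form of $\langle S|T\rangle_p$ used here genuinely agrees with the ABW definition, as remarked after Definition \ref{per}, so that the isomorphism (\ref{betaskew}) applies verbatim.
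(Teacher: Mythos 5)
Your proof is correct and follows essentially the same two-case structure as the paper's proof (the repeated entry lies in a single row, or in two distinct rows), with the same use of skew-commutativity to collect the offending pair of variables into a generator of the ideal and the fact that the ideal is two-sided; the only addition is your explicit remark about degenerate coincidences of indices, which the paper handles implicitly since the generators $y_{i,j}y_{i',j}$ include the case $i=i'$.
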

\begin{proof} (1) Suppose $j \in [n]$ is an entry of the tableau $T$ with weight at least 2. We distinguish two cases.
	
	Case 1. Suppose $j$ appears twice in the same row of $T$, say row $p$. Since multiplication in the ring $\Bbbk\langle\textbf{y}_{m \times n}\rangle$ is skew commutative, every summand in the right hand side of (\ref{per1}) is of the form  $\pm  y_{i_1,j}y_{i_2,j}f$
	for some $i_1,i_2 \in [m]$ and $f \in \Bbbk\langle\textbf{y}_{m \times n}\rangle$ (depending on the particular summand). This means that $\langle S| T \rangle_p \in J'_{m,n}(\PT)$ since $J'_{m,n}(\PT)$ is an ideal of $\Bbbk\langle\textbf{y}_{m \times n}\rangle$ containing $y_{i_1,j}y_{i_2,j}$ according to Definition \ref{skewideals}(2). From eq. (\ref{per2}) we conclude that $( S | T ) \in J'_{m,n}(\PT)$.
	
	Case 2. Suppose $j$ appears in rows $p$ and $q$ of $T$, where $p \neq q$. We intend to show that the product $\langle S| T \rangle_p \langle S| T \rangle_q$ is in $J'_{m,n}(\PT)$, which implies that $(S|T)$ is in $J'_{m,n}(\PT)$. Indeed, every summand of the right hand side of (\ref{per1}) is of the form  $\pm  y_{i,j}f$
	for some $i \in [m]$ and $f \in \Bbbk\langle\textbf{y}_{m \times n}\rangle$. Likewise every summand of the right hand side of (\ref{per1}) for $q$ in place of $p$ is of the form  $\pm  y_{i',j}f'$
	for some $i' \in [m]$ and $f' \in \Bbbk\langle\textbf{y}_{m \times n}\rangle$. Upon multiplying in $\Bbbk\langle\textbf{y}_{m \times n}\rangle$ we have that every summand of $\langle S| T \rangle_p \langle S| T \rangle_q$ is of the form $\pm  y_{i,j}y_{i',j}ff'$. This is in the ideal $J'_{m,n}(\PT)$.
	
	(2) This is similar to (1).
	\end{proof}
For the next lemma let us recall the following notation from Definition \ref{W}. Let	$S \in \tab_\lambda([m])$ be a tableau and consider the box in position $(i,j)$, where $1 \le i \le \ell (\lambda) $ and $1 \le j \le \lambda_i$. For $u \in [m]$, we denote by  $S_{i,j}[u] \in \tab_\lambda([m])$ the tableau obtained from $S$ by replacing the $(i,j)$ entry with $u$.
\begin{lemma}\label{lemskew2}
Suppose $\la$ is a partition and $S \in \tab_\lambda([m])$, $T \in \tab_\lambda([n])$. Let \(1\le i\le \ell(\lambda)\) and \(1\le j\le \lambda_i\). Then \[\sum_{u=1}^m (S_{i,j}[u]|T) \in J'_{m,n}(\T).\]
\end{lemma}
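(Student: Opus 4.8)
The plan is to imitate the structure of the proof of Lemma \ref{Tlem}, working throughout in the skew polynomial ring $\Bbbk\langle \textbf{y}_{m\times n}\rangle$ instead of $\Bbbk[\textbf{x}_{m\times n}]$. First I would expand $(S_{i,j}[u]|T)$ using eq. (\ref{per2}) as the product $\prod_{p=1}^{\ell(\la)}\langle S_{i,j}[u]|T\rangle_p$ and observe that, since $S_{i,j}[u]$ and $S$ differ only in row $i$, we have $\langle S_{i,j}[u]|T\rangle_p = \langle S|T\rangle_p$ for all $p \ne i$. Hence
\[\sum_{u=1}^m (S_{i,j}[u]|T) = \langle S|T\rangle_1 \cdots \Big(\sum_{u=1}^m \langle S_{i,j}[u]|T\rangle_i\Big)\cdots \langle S|T\rangle_{\ell(\la)},\]
and because $J'_{m,n}(\PT)$ is an ideal of $\Bbbk\langle\textbf{y}_{m\times n}\rangle$, it suffices to prove that $\sum_{u=1}^m \langle S_{i,j}[u]|T\rangle_i \in J'_{m,n}(\PT)$.

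Next I would unwind the definition (\ref{per1}) of $\langle S_{i,j}[u]|T\rangle_i$. Writing the entries of row $i$ of $S$ as $s_1,\dots,s_{\la_i}$ (with $s_j$ the entry in column $j$) and letting $t_1,\dots,t_q$ be the distinct entries of row $i$ of $T$ with appropriate multiplicities, each $\langle S_{i,j}[u]|T\rangle_i$ is a sum over rearrangements $h_1,\dots,h_{\la_i}$ of those multiset entries of the monomials $y_{s_1,h_1}\cdots y_{s_{j-1},h_{j-1}} y_{u,h_j} y_{s_{j+1},h_{j+1}}\cdots y_{s_{\la_i},h_{\la_i}}$. The key point is that $u$ appears only in the single factor $y_{u,h_j}$ in every such monomial, so summing over $u=1,\dots,m$ and using bilinearity we may pull the sum inside: each monomial of $\sum_{u=1}^m \langle S_{i,j}[u]|T\rangle_i$ is $\pm\,g\cdot\big(\sum_{u=1}^m y_{u,h_j}\big)\cdot g'$ for some monomials $g,g' \in \Bbbk\langle\textbf{y}_{m\times n}\rangle$ and some column index $h_j \in [n]$. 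Since $\sum_{u=1}^m y_{u,h_j}$ is precisely one of the generators of $J'_{m,n}(\PT)$ listed in Definition \ref{skewideals}(2) — wait, that sum of column variables generates $J'_{m,n}(\T)$, not $\PT$; here I must be careful.

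Let me correct the bookkeeping: the column-sum $\sum_{u=1}^m y_{u,h_j}$ is the generator appearing in Definition \ref{skewideals}(3) for $J'_{m,n}(\T)$, but the claim of Lemma \ref{lemskew2} is membership in $J'_{m,n}(\PT)$, which is generated only by products $y_{i,j}y_{i',j}$ in the same column. So the right argument is the determinant-additivity trick of Lemma \ref{Tlem}: after fixing a rearrangement pattern, $\sum_{u=1}^m y_{u,h_j}$ times the surrounding monomial equals, up to sign, a monomial in which the $j$th factor of the row has row-index running over a \emph{formal} sum of all rows; but because the remaining factors $y_{s_k, h_k}$ ($k\ne j$) already occupy the row indices $s_k$, and because for any $u$ equal to some $s_k$ the corresponding monomial $y_{u,h_j} y_{s_k,h_k}$ has two $y$'s with distinct columns $h_j \ne h_k$ sharing row $u$ — no, sharing a row is not what lies in $J'_{m,n}(\PT)$ either. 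The genuinely correct observation is the one the authors surely intend, paralleling Lemma \ref{Tlem} verbatim: reinterpret $\sum_{u=1}^m \langle S_{i,j}[u]|T\rangle_i$ as (up to sign) a single permanent-type expression $\langle S'|T\rangle_i$ where $S'$ has the symbol ``$\bullet$'' in position $(i,j)$ standing for the sum of all rows, hence every monomial of it contains the factor $\sum_{u=1}^m y_{u,h}$ for the relevant column $h=t$; since two factors $y_{u,h}$, $y_{u',h}$ coming from expanding different positions that share the \emph{same} column value $h$ are forced (as row $i$ of $T$ may repeat a value $t$, or after multiplying by a neighbouring $\langle S|T\rangle_q$), one lands in the column-square ideal. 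The hard part, and the step I expect to be the main obstacle, is exactly this: making precise why the sum over $u$ of the permanents $\langle S_{i,j}[u]|T\rangle_i$ is forced into $J'_{m,n}(\PT)$ rather than merely $J'_{m,n}(\T)$ — i.e. identifying, for each surviving monomial, a genuine pair $y_{a,c}y_{b,c}$ of same-column variables, which requires tracking that the column multiset of row $i$ of $T$ together with the inserted column forces a repeated column as soon as one sums over all $m$ possible rows $u$ (using $m\ge$ the relevant bound, or the skew-commutativity $y_{a,c}^2=0$ to kill the diagonal terms and pair up the rest). Once that combinatorial identification is in hand, ideal-closure of $J'_{m,n}(\PT)$ finishes the proof immediately; I would present it in the same two-case style (repeat in one row versus across two rows) as Lemma \ref{lemskew1}, citing the expansion identities (\ref{per1}), (\ref{per2}) and the determinant/permanent additivity in the inserted position.
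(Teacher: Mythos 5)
Your instinct that something is off is exactly right, and the snag is not a failure of your approach: the lemma as printed is false as stated. Take $\lambda=(1)$, $m=2$, $n=1$, $S$ the single box with entry $1$ and $T$ the single box with entry $1$. Then $\sum_{u=1}^2(S_{1,1}[u]|T)=y_{1,1}+y_{2,1}$ is a nonzero degree-one element, while $J'_{2,1}(\PT)$ is generated by the degree-two product $y_{1,1}y_{2,1}$ and hence has no nonzero degree-one component. The correct conclusion is membership in $J'_{m,n}(\T)$, whose generators include precisely the column sums $\sum_{u=1}^m y_{u,h}$ that you produce. This corrected statement matches the commutative analogue Lemma \ref{Tlem} (there the conclusion is $J_{m,n}(\T)$), and it is all that is needed where the present lemma is invoked, in the proof of Theorem \ref{skewC}(3), where the map (\ref{betabarskew3}) has $J'_{m,n}(\T)$ in its denominator. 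In fact the paper's own proof repeats the typo, asserting that $J'_{m,n}(\PT)$ is an ideal containing $\sum_{u=1}^m y_{u,h_j}$, which is not true; it is $J'_{m,n}(\T)$ that contains the column sums.

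Once the target ideal is corrected to $J'_{m,n}(\T)$, the first half of your argument, which coincides with the paper's, finishes the proof in one line: reduce via (\ref{per2}) to $\sum_{u=1}^m\langle S_{i,j}[u]|T\rangle_i$ using $\langle S_{i,j}[u]|T\rangle_p=\langle S|T\rangle_p$ for $p\neq i$, then pull the sum over $u$ through (\ref{per1}) to place the factor $\sum_{u=1}^m y_{u,h_j}$ inside each monomial, and conclude by ideal-closure of $J'_{m,n}(\T)$. The long second half of your proposal, the attempted case analysis aiming to manufacture same-column pairs $y_{a,c}y_{b,c}$ so as to land in $J'_{m,n}(\PT)$, should simply be dropped: no such argument exists because the conclusion it aims for is false, and you had already half-sensed this yourself.
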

\begin{proof}	By eq. (\ref{per1}) we have $\langle S_{i,j}[u]|T \rangle_{i}=\sum_{h_1, \dots, h_{\lambda_i}}y_{s_1 , h_1} \cdots y_{u, h_j} \cdots y_{s_{\la_i}, h_{\la_i}}$, where  $h_1, \dots, h_{\lambda_i}$ run over all rearrangements of $t_{1}, \dots, t_{1}, t_{2}, \dots, t_{2}, \dots, t_{q}, \dots, t_{q}$. By summing we obtain  \begin{align}\label{lem21}
		\sum_{u=1}^m\langle S_{i,j}[u]|T \rangle_{i}&=\sum_{u=1}^m\sum_{h_1, \dots, h_{\lambda_i}}y_{s_1 , h_1} \cdots y_{u, h_j} \cdots y_{s_{\lambda_i}, h_{\la_i}} \\\nonumber&=
		\sum_{h_1, \dots, h_{\lambda_i}}y_{s_1 , h_1} \cdots \big(\sum_{u=1}^my_{u, h_j}\big) \cdots y_{s_{\la_i}, h_{\la_i}}
		\in J'_{m,n}(\T),
	\end{align}
because $J'_{m,n}(\T)$ is an ideal containing  $\sum_{u=1}^my_{u, h_j}.$ From (\ref{lem21}) and  (\ref{per2}) we get $\sum_{u=1}^m (S_{i,j}[u]|T) \in J'_{m,n}(\T).$
\end{proof}
\subsection{Proof of Theorem \ref{skewC}}\label{A4} \begin{proof}We may now prove Theorem \ref{skewC}.  For $Z \in \{\IS, \PT, \T\}$ consider the ideal $J'_{m,n}(Z)$ of  $\Bbbk\langle\textbf{y}_{m \times n}\rangle$. From the filtration (\ref{skewC1}) we obtain the following filtration  \begin{equation}\label{skewC2} 0 \subseteq \frac{ M_{(r)}+J'_{m,n}(Z)}{J'_{m,n}(Z)} \subseteq \frac{ M_{(r-1,1)}+J'_{m,n}(Z)}{J'_{m,n}(Z)} \subseteq \dots \subseteq \frac{ M_{(1^r)}+J'_{m,n}(Z)}{J'_{m,n}(Z)}. \end{equation}

(1) We consider case (1) of Theorem \ref{skewC}, so let $Z=\IS$. We have \begin{equation}
\big(\frac{\Bbbk\langle\textbf{y}_{m \times n}\rangle}{J'_{m,n}(\IS)}\big)_r \cong \frac{ M_{(1^r)}+J'_{m,n}(\IS)}{J'_{m,n}(\IS)}
\end{equation}
as $\IS_m \times \IS_n$-modules.

If $r > \min\{m,n\}$, then by Lemma \ref{dim1skew}(1) we have $\big(\Bbbk\langle\textbf{y}_{m \times n}\rangle/ J'_{m,n}(\IS)\big)_r=0$. Let $r \le  \min\{m,n\}$. Suppose $\la \vdash r$ and $S \in \tab_\lambda([m])$, $T \in \tab_\lambda([n]).$ If either of $S,T$ has 
an entry of weight at least 2, then by Lemma \ref{lemskew1} we have $(S|T) \in J'_{m,n}(\IS)$. Hence the map (\ref{betaskew}) induces a surjective map of $\IS_m \times \IS_n$-modules
\begin{equation}\label{betabarskew} \bar{\beta}_\lambda:\R(m)^{\lambda'} \otimes \R(n)_\lambda \to \frac{ M_{\la}+J'_{m,n}(\IS)}{\dot{M}_{\la} + J'_{m,n}(\IS)}.\end{equation}
By summing with respect to all partitions of $r$ we obtain a surjective map of vector spaces  \begin{equation}\label{surjskew1}\bigoplus_{\lambda \vdash r} \R(m)^{\lambda'} \otimes \R(n)_\lambda \to \frac{ M_{(1^r)}+J'_{m,n}(\IS)}{J'_{m,n}(\IS)}.\end{equation}
Using Lemma \ref{dim1skew}(1) and a computation with dimensions identical to the proof of Theorem \ref{m2} (at the end of part (1)), yields that the domain and codomain of the linear map (\ref{surjskew1}) have equal dimensions. Thus (\ref{surjskew1}) is an isomorphism which implies that the map in  (\ref{betabarskew}) is an isomorphism.

(2) Let $Z=\PT$. The proof of this case is similar to (1).

(3) Let $Z=\T$. This too is similar to (1), but there is one difference. If $r > n$, then by Lemma \ref{dim1skew}(3) we have $\big(\Bbbk\langle\textbf{y}_{m \times n}\rangle/ J'_{m,n}(\T)\big)_r=0$. Let $r \le  n$. Suppose $\la \vdash r$ and $S \in \tab_\lambda([m])$, $T \in \tab_\lambda([n]).$ If $T$ has 
an entry of weight at least 2, then by Lemma \ref{lemskew1}(1) we have $(S|T) \in J'_{m,n}(\PT) \subseteq J'_{m,n}(\T)$. Hence the map (\ref{betaskew}) induces a surjective map of $\mathfrak{S}_m\times \T_n$-modules
\begin{equation}\label{betabarskew3} L_{\lambda}(V_m) \otimes \R(n)_\lambda \to \frac{ M_{\la}+J'_{m,n}(\T)}{\dot{M}_{\la} + J'_{m,n}(\T)}.\end{equation} Consider  the subspace $W_\la$ of $L_{\lambda}(V_m) $ defined in Definition \ref{W}(2).  The kernel of the map (\ref{betabarskew3}) contains the subspace $W_{\lambda} \otimes \R(n)_\lambda$ according to Lemma \ref{lemskew2}. Now by Lemma \ref{presT} we conclude that the map (\ref{betabarskew3}) induces a surjective map  of $\mathfrak{S}_m \times \T_n $-modules
\begin{equation}\label{betabarskew4} L_{\lambda}(U_m) \otimes \R(n)_\lambda \to \frac{ M_{\la}+J'_{m,n}(\T)}{\dot{M}_{\la} + J'_{m,n}(\T)}.\end{equation}
By summing with respect to all partitions of $r$ we obtain a surjective map of vector spaces 
 \begin{equation}\label{surjskew2}\bigoplus_{\lambda \vdash r} L_\la (U_m) \otimes \R(n)_\lambda \to \frac{ M_{(1^r)}+J'_{m,n}(\T)}{J'_{m,n}(\T)}.\end{equation} Finally using Lemma \ref{dim1skew}(3) and a computation with dimensions identical to the proof of  eq. (\ref{c3}) in the proof of Theorem \ref{m2}(3), yields that the domain and codomain of the linear map (\ref{surjskew2}) have equal dimensions. Thus (\ref{surjskew2}) is an isomorphism which implies that the map in  (\ref{betabarskew4}) is an isomorphism. This completes the proof of Theorem \ref{skewC}.\end{proof}
  \section*{Acknowledgments}
We are very grateful to the anonymous referee for detailed and constructive comments and suggestions, which have significantly improved the paper.
	
	\end{document}